\begin{document}
\title[Quantum Euler-Poisson equation]
{The QKP limit of the quantum \\ Euler-Poisson equation}
\author{Huimin Liu and Xueke Pu}

\address{Huimin Liu \newline
Department of Mathematics, Chongqing University, Chongqing 401331, P.R.China} \email{ hmliucqu@163.com}

\address{Xueke Pu \newline
Department of Mathematics, Chongqing University, Chongqing 401331, P.R.China} \email{ xuekepu@cqu.edu.cn}

\thanks{This work is supported in part by NSFC (11471057).}
\subjclass[2000]{35M20; 35Q35} \keywords{Quantum Euler-Poisson equation; the KP equation; Reductive perturbation method}

\begin{abstract}
In this paper, we consider the derivation of the Kadomtsev-Petviashvili (KP) equation for cold ion-acoustic wave in the long wavelength limit of the two-dimensional quantum Euler-Poisson system, under different scalings for varying directions in the Gardner-Morikawa transform. It is shown that the types of the KP equation depend on the scaled quantum parameter $H>0$. The QKP-I is derived for $H>2$, QKP-II for $0<H<2$ and the dispersive-less KP (dKP) equation for the critical case $H=2$. The rigorous proof for these limits is given in the well-prepared initial data case, and the norm that is chosen to close the proof is anisotropic in the two directions, in accordance with the anisotropic structure of the KP equation as well as the Gardner-Morikawa transform. The results can be generalized in several directions.
\end{abstract}

\maketitle \numberwithin{equation}{section}
\newtheorem{proposition}{Proposition}[section]
\newtheorem{theorem}{Theorem}[section]
\newtheorem{lemma}[theorem]{Lemma}
\newtheorem{remark}[theorem]{Remark}
\newtheorem{hypothesis}[theorem]{Hypothesis}
\newtheorem{definition}{Definition}[section]
\newtheorem{corollary}{Corollary}[section]
\newtheorem{assumption}{Assumption}[section]
\section{Introduction}
\setcounter{section}{1}\setcounter{equation}{0}

The Kadomtsev-Petviashvili (KP) equation is a two-dimensional extension of the KdV equation derived in \cite{KP}, as ``universal" models for the propagation of weakly nonlinear dispersive long waves that are essentially one-dimensional with weak transverse effects, when studying the stability of the solitary waves of the KdV equation. In addition to being an important dispersive models, both the KdV and the KP equations approximately describe the evolution of long waves in many physical settings, such as shallow-water waves with weakly non-linear restoring forces, long internal waves in a density-stratified ocean, ion acoustic waves in a plasma, acoustic waves on a crystal lattice and nonlinear matter-wave pulses in Bose-Einstein condensates (BEC). The following is the classical form of the Kadomtsev-Petviashvili equation (KP)
\begin{subequations}\label{equation1}
\begin{numcases}{}
u_{t}+uu_{x}+\mu u_{xxx}+\lambda v_{y}=0,\\
v_{x}=u_{y},
\end{numcases}
\end{subequations}
where $u=u(x,y,t)$, $(x,y)\in R^{2}$, $t\geq0$ in two-dimensional space. The constant $\lambda$ measures the transverse dispersion effects and are normalized to $\pm1$. When $\mu>0$, \eqref{equation1} is called the KP-I equation for $\lambda=-1$ and KP-II for $\lambda=+1$. When $\mu<0$, a simple transform shows that it corresponds to KP-I when $\lambda=+1$ and KP-II when $\lambda=-1$. 
When $\mu=0$ and $\lambda\neq0$, the equation \eqref{equation1} degenerates to the dispersive-less KP equation (dKP) which is integrable \cite{FM}. 
For $\mu=\lambda=0$, the equation \eqref{equation1} degenerates to the Burgers equation which exhibits singularities in a finite time. Like the KdV equation, the KP-I as well as KP-II equation \eqref{equation1} are completely integrable by using the inverse scattering transform \cite{D}. In the KdV equation waves are strictly one-dimensional, while in the KP equation this restriction is relaxed. Still, both in the KdV and the KP equation, waves have to travel in the positive $x$-direction. To be physically meaningful, the wave propagation direction has to be not-too-far from the $x$-direction, i.e. with only slow variations of solutions in the $y$-direction. Because of the asymmetry in the $x$- and $y$-directions, the waves described by the KP equation behave differently in the direction of propagation ($x$-direction) and transverse ($y$-direction), and oscillations in the $y$-direction tend to be smoother or, to be of small deviation in other words. The KP equation can be used to model waves of long wavelength with weakly nonlinear restoring forces and frequency dispersion and can be justified from various physics contexts.

There is a lot of work concerning the rigorous or formal justification of the KP limit. For clarify, we list only a few. First, a rigorous comparison between analytic solutions of 3D water wave problem and those of KP but on a time interval not allowing to observe the KP dynamic was given in \cite{K2}. Gallay and Schneider \cite{GS} obtained rigorously the dynamic of the KP-II equation to that of a Boussinesq equation. Youssef and Lannes proved \cite{BL} rigorously that a solution of a general class of quasilinear hyperbolic system (but not the 3D water wave problem) can be approximated by two waves moving in two opposite directions and satisfying a coupled or uncoupled system of KP-II equations at different orders. Moreover Lannes showed in \cite{L} the consistency of the KP-II approximation from a Boussinesq system. Pu \cite{Pu} derived the 2D KP-II equation rigorously from the dynamics of ions in a hot plasma, while leaves the cold plasma case open. Chiron and Rousset \cite{CR} proved rigorously the convergence to the Korteweg-de Vries (KdV) equation in 1D and to the KP-I equation in higher dimensions for the nonlinear Schr\"odinger equation with nonzero limit at infinity by a compactness argument. Then Chiron \cite{Chi} derived rigorously in some sense a gKdV or gKP-I equation involving cubic nonlinearity for either suitable nonlinearities for nonlinear Schr\"odinger equation either a Landau-Lifshitz type equation. On the formal level, there is much work recently. For example, the KP-I asymptotic dynamics for the Gross-Pitaevskii equation in three dimension is derived in \cite{Ber}. The 2D KP-II equation can be derived from dusty plasma with variable dust charge or ion acoustic waves, and the modified KP equation can be derived in an inhomogeneous plasma with finite temperature drifting ions \cite{MSD}.


In this paper, we aim to justify rigorously the quantum Kadomtsev-Petviashvili (QKP) equation \eqref{e8} from the quantum Euler-Poisson (QEP) system \eqref{e1}, which is an important ion acoustic wave model. For simplicity, `QKP' will refer either to QKP-I or to QKP-II in what follows, depending on the scaled quantum parameter $H>0$. Such a QEP equation cannot be categorized mathematically into the equations mentioned above from which rigorous KP justification was made, due to the different structure of the QEP equation. This makes the present paper interesting. The quantum Euler-Poisson system comes into play from the classic models mainly due to the presence of the Bohm potential, whose effect is embodied with a term containing the Planck's constant $\hbar$ indicating the quantum effect. 
Haas et al. \cite{H,HGG} used the quantum hydrodynamics model (QHD) to study quantum ion acoustic waves in the weakly nonlinear theory and obtained a deformed Korteweg-de Vries equation which involves the parameter $H$, proportional to the Planck's constant $\hbar$. They observed several characteristic features of pure quantum origin for the linear, weakly nonlinear and fully nonlinear waves. Such an approximation by the KdV equation was justified recently \cite{LP}. As a first step towards a justification of the QKP equation as an envelope equation, we consider in this paper the following 2D quantum Euler-Poisson equations with two species quantum plasmas:
\begin{subequations}\label{equ1}
\begin{numcases}{}
\partial_{t}n_{i}+\nabla\cdot(n_{i}\mathbf{u_{i}})=0, \label{equ1-1}\\
\partial_{t}\mathbf{u_{i}}+\mathbf{u_{i}}\cdot\nabla \mathbf{u_{i}}=-\nabla\phi,\label{equ1-2}\\
\Delta\phi=n_{e}-n_{i},\label{equ1-3}
\end{numcases}
\end{subequations}
where $n_{e,i}$ are the electronic and ionic number densities, $\mathbf{u_{i}}=(u_{i_{1}},u_{i_{2}})$ the ionic velocities, $\phi$ the scalar potential at time $t\geq0$ and position $x=(x_{1}, x_{2}) \in R^{2}$.
Particularly, the relation between of the electrostatic potential and the electron density satisfies
\begin{equation}
\begin{split}\label{tum}
\phi=-\frac{1}{2}+\frac{1}{2}n_{e}^{2} -\frac{H^{2}\Delta\sqrt{n_{e}}}{2\sqrt{n_{e}}}.
\end{split}
\end{equation}
${H^{2}}\Delta(\sqrt{n_{e}})/{2\sqrt{n_{e}}}$ is the so-called quantum Bohm potential, $H={\hbar\omega_{p_{e}}}/{2\kappa_{B}T_{F_{e}}}>0$ is the nondimensional quantum parameter, $\hbar$ is the Planck constant divided by $2\pi$, $\kappa_{B}$ is the Boltzmann's constant, $T_{F_{e}}$ is the Fermi temperature and $\omega_{p_{e}}=({4\pi n_{0}e^{2}}/{m_{e}})^{1/2}$, $n_{0}$ is the equilibrium density for both electrons and ions, $-e$ is the electron charge 
and $m_{e,i}$ the electron and ionic mass. $\mathbf{u_{i}}$ is the ion fluid speed normalized to the ion acoustic velocity $C_{s}=(2\kappa_{B}T_{F_{e}}/m_{i})^{1/2}$. The time and space variables are in units of the ion plasma period $\omega_{p_{i}}^{-1}=({m_{i}}/{4\pi n_{0}e^{2}})^{1/2}$ and the Debye radius $\lambda_{D}=({2\kappa_{B}T_{F_{e}}}/{4\pi n_{0}e^{2}})^{1/2}$ respectively. We assume that the electrons obey the equation of state in two-dimension (Manfredi and Haas \cite{MH})
\begin{equation*}
\begin{split}
p_{e}=\frac{m_{e}v_{F_{e}}^{2}}{4n_{0}}n_{e}^{3},
\end{split}
\end{equation*}
where the electron Fermi velocity is $v_{F_{e}}$ connected to the Fermi temperature $T_{F_{e}}$ by $m_{e}v_{F_{e}}^{2}/2=\kappa_{B}T_{F_{e}}$.
The quantum parameter $H$ is a measure of quantum diffraction effects and only modifies the dispersive coefficient. Physically, $H$ is the ratio between the electron plasmon energy and the electron Fermi energy. 

This model \eqref{equ1}-\eqref{tum} is the basic model to be studied in the following, which will lead to the QKP equation \eqref{e8} under the Gardner-Morikawa transform \eqref{trans}. The formal derivation is given in Section 2. The main interest in this paper is to make such a formal derivation rigorous, which is presented in Section 3. The implication of the justification is at least twofold. It not only makes interesting all the results on QKP equation up to date, but also it states that the solution of the QEP can exist on a very long time interval $[0,\varepsilon^{-3/2}\tau]$, where $\tau$ is the time scale of the rigorously QKP equation observed and $\varepsilon$ is the scale (under Sobolev norm) of the initial data of the QEP equation. It also states that the approximation error of the QKP equation to the QEP is of order $O(\varepsilon^2)$. For details, see Theorem \ref{thm1} and the remarks that follow.

In this paragraph, we make some remarks on the existing work that is closely related to our present work. Indeed, in the past one or two decades, many efforts have been made to rigorously justify various equations, such as the nonlinear Schr\"odinger equation \cite{K,KSM}, the KdV equation \cite{CR,C,GP12,LP}, the KP equation \cite{CR,K2,Pu,L,BL,LS}, the Zakharov-Kuznetsov (ZK) equation \cite{LL,Pu} and very recently the Ostrovsky equation\cite{BM}. Whether the KP equation provides a good approximation to the 2D quantum Euler-Poisson system is not known. 
As said above, many significant results already exist. First, without quantum effects, Guo and Pu \cite{GP12} established rigorously the KdV limit for the ion Euler-Poisson system in 1D for both cold and hot plasma cases, where the electron density satisfies the classical Maxwell-Boltzmann law. Recently, Liu and Pu \cite{LP} obtained rigorously the QKdV limit for the one-dimensional QEP system for the cold as well as hot plasma, the electron equilibrium is given by a Fermi-Dirac distribution. As in the study of transversal stability of unidimensional solitons, the KP equation arises as a bidimensional generalization of the KdV, so that we have reason to believe that the KP equation provides a good approximation to the solution of the 2D quantum Euler-Poisson problem, but there are different singularities between $x$ and $y$ directions for the Gardner-Morikawa transform \eqref{trans} compared to KdV limit, which is one of the difficult aspects in this paper. Hence, the aim of this paper is to take a new step in this direction and to justify a system of QKP equations, likely to furnish a better approximation to the exact solution of the QEP system.

On the other hand, Pu \cite{Pu} derived rigorously the 2D KP-II equation from the Euler-Poisson equation for hot plasma and derived the 3D ZK equation for both the cold and hot plasma cases, in which the hot isothermal electron are described by the Boltzmann distribution. However, it leaves open the rigorous derivation of 2D KP-II equation from the important cold plasma case. We may need to note that the scalings between KP limit and ZK limit are different and the ZK limit scaling is isotropic and is much more like the KdV limit case. The main reason may lie in the facts that the 2D Euler-Poisson system in the cold plasma case is not Friedrich symmetrizable and the scaling is anisotropic in the two directions, which leads to difficulties for obtaining uniform estimates for the remainder $(N_i,N_e,\mathbf{U})$. 
In our present paper, we show that the QKP equation indeed gives a rigorous approximation to the solution of the 2D quantum Euler-Poisson system for a cold plasma with the Bohm potential. The essential difference compared to \cite{Pu} is that a new triple norm \eqref{|||} is defined for the remainder, which will lead to a closed estimates inequality. The result in this paper gives affirmatively the rigorous justification that leaves open in \cite{Pu}. This makes the present paper different and more interesting, while also makes the proof in Section 3 more tough.


Before we end the Introduction, we would like to point out several possible generalizations, whose formal or rigorous justifications will not be given below. Firstly, the ion momentum equation \eqref{equ1-2} does not contain ion pressure, which generally depends on ion density with the form $P_i(n_i)=T_i\ln n_i$ for $T_i\geq0$. The present paper corresponds to the cold ion case $T_i=0$, but the result in this paper can be generalized to general case $T_i>0$, and indeed, the proof will be slightly simpler since in this case, the system is Friedrich symmetrizable. The result in this paper can be also generalized to the general $\gamma$-law of the ion pressure, i.e., when $P_i(n_i)=T_in_{i}^{\gamma}$ for $\gamma\geq1$. 
Secondly, without quantum effects, the result in this paper gives rigorous KP-II justification from the Euler-Poisson equations for the ions in cold plasma, which leaves open in \cite{Pu}. Thirdly, in the Euler-Poisson system we take \eqref{tum} as the relation between the electrostatic potential and the electron density. But we can also obtain similar results for the case that the hot isothermal electron are described by the Boltzmann distribution as in \cite{Pu}. Finally, we can generalize the result to justify the 3D ZK equation from the 3D QEP equation. For this the Gardner-Morikawa transform \eqref{trans} should be changed into the following form, consisting with the isotropic property of the ZK equation
\begin{equation*}
x\rightarrow\epsilon^{1/2}(x-Vt),\  \ y\rightarrow\epsilon^{1/2}y, \ \ z\rightarrow\epsilon^{1/2} z, \ \ t\rightarrow\epsilon^{3/2}t.
\end{equation*}
All the generalizations can be made rigorous, but for clarity we only focus on \eqref{equ1} with \eqref{tum} and no more remarks on these generalizations will be made below.


This paper is organized as follows. In Section 2, we present the formal derivation of the QKP equation \eqref{e8} and state the main result in Theorem \ref{thm1}. In Section 3, we present uniform estimates for the remainders in \eqref{cut}. The main estimates are stated in Proposition \ref{P1} and \ref{P2}. Finally, we complete the proof in Section 4.

\section{Formal expansion and main results}

\subsection{Formal QKP expansion}\label{1.1}
In this subsection, we derive the QKP equation from the 2D Euler-Poisson equations \eqref{equ1}-\eqref{tum}. Consider
the following Gardner-Morikawa type of transformation in \eqref{equ1}-\eqref{tum}
\begin{equation}\label{trans}
x_{1}\rightarrow\epsilon^{1/2}(x_{1}-Vt),\  \ x_{2}\rightarrow\epsilon x_{2}, \ \ t\rightarrow\epsilon^{3/2}t,
\end{equation}
where $\varepsilon$ stands for the amplitude of the initial disturbance and is assumed to be small compared with unity and $V$
is the wave speed to be determined. Then we obtain the parameterized system
\begin{subequations}\label{equ2}
\begin{numcases}{}
\varepsilon\partial_{t}n_{i}-V\partial_{x_{1}}n_{i}
+\partial_{x_{1}}(n_{i}u_{i_{1}})+\varepsilon^{1/2}\partial_{x_{2}}(n_{i}u_{i_{2}})=0, \label{equ2-1}\\
\varepsilon\partial_{t}u_{i_{1}}-V\partial_{x_{1}}u_{i_{1}}
+u_{i_{1}}\partial_{x_{1}}u_{i_{1}}+\varepsilon^{1/2}u_{i_{2}}\partial_{x_{2}}u_{i_{1}}=-\partial_{x_{1}}\phi,\label{equ2-2}\\
\varepsilon\partial_{t}u_{i_{2}}-V\partial_{x_{1}}u_{i_{2}}
+u_{i_{1}}\partial_{x_{1}}u_{i_{2}}+\varepsilon^{1/2} u_{i_{2}}\partial_{x_{2}}u_{i_{2}}=-\varepsilon^{1/2}\partial_{x_{2}}\phi,\label{equ2-3}\\
\varepsilon\partial_{x_{1}}^{2}\phi+\varepsilon^{2}\partial_{x_{2}}^{2}\phi=n_{e}-n_{i}, \label{equ2-4}
\end{numcases}
\end{subequations}
and
\begin{equation}\label{equa1'}
\phi=-\frac{1}{2}+\frac{1}{2}n_{e}^{2}-\frac{H^{2}}{2\sqrt{n_{e}}}\left(\varepsilon\partial_{x_{1}}^{2}\sqrt{n_{e}}
+\varepsilon^{2}\partial_{x_{2}}^{2}\sqrt{n_{e}}\right).
\end{equation}
We consider the following formal expansion around the equilibrium solution $(n_{i},n_{e},u_{i_{1}},u_{i_{2}})=(1,1,0,0)$,
\begin{equation}\label{expan-formal}
\begin{cases}
n_{i}=1+\epsilon n_{i}^{(1)}+\epsilon^{2}n_{i}^{(2)}+\epsilon^{3}n_{i}^{(3)}+\epsilon^{4}n_{i}^{(4)}
+\epsilon^{5}n_{i}^{(5)}+\epsilon^{6}n_{i}^{(6)}+\cdots,\\
n_{e}=1+\epsilon n_{e}^{(1)}+\epsilon^{2}n_{e}^{(2)}+\epsilon^{3}n_{e}^{(3)}+\epsilon^{4}n_{e}^{(4)}
+\epsilon^{5}n_{e}^{(5)}+\epsilon^{6}n_{e}^{(6)}+\cdots,\\
u_{i_{1}}=\epsilon u_{i_{1}}^{(1)}+\epsilon^{2}u_{i_{1}}^{(2)}+\epsilon^{3}u_{i_{1}}^{(3)}+\epsilon^{4}u_{i_{1}}^{(4)}
+\epsilon^{5}u_{i_{1}}^{(5)}+\epsilon^{6}u_{i_{1}}^{(6)}+\cdots,\\
u_{i_{2}}=\epsilon^{3/2} u_{i_{2}}^{(1)}+\epsilon^{5/2}u_{i_{2}}^{(2)}+\epsilon^{7/2}u_{i_{2}}^{(3)}+\epsilon^{9/2}u_{i_{2}}^{(4)}
+\epsilon^{11/2}u_{i_{2}}^{(5)}+\epsilon^{13/2}u_{i_{2}}^{(6)}+\cdots.
\end{cases}
\end{equation}
Plugging \eqref{expan-formal} into \eqref{equ2}-\eqref{equa1'}, we get a power series of $\epsilon$, whose coefficients depend on
$(n_{i}^{(k)},n_{e}^{(k)},u_{i_{1}}^{(k)},u_{i_{2}}^{(k)})$ for $k=1,2,\cdots$.

\subsubsection{Derivation of the QKP equation for $n_{i}^{(1)}$}\label{1.11}

At the order $O(\epsilon)$, we obtain
\begin{subequations}\label{e1}
\begin{numcases}{(\mathcal S_0)}
-V\partial_{x_{1}}n_{i}^{(1)}+\partial_{x_{1}}u_{i_{1}}^{(1)}=0,\\
-V\partial_{x_{1}}u_{i_{1}}^{(1)}=-\partial_{x_{1}}n_{e}^{(1)},\\
0=n_{e}^{(1)}-n_{i}^{(1)}.
\end{numcases}
\end{subequations}
To get a nontrivial solution of $(n_{i}^{(1)},n_{e}^{(1)},u_{i_{1}}^{(1)})$, we let the determinant of the coefficient matrix of
\eqref{e1} vanish to obtain
\begin{equation}\label{e2}
V^{2}=1.
\end{equation}

At the order $O(\epsilon^\frac{3}{2})$, we obtain
\begin{equation}\label{e3}
-V\partial_{x_{1}}u_{i_{2}}^{(1)}=-\partial_{x_{2}}n_{e}^{(1)}.
\end{equation}

At the order $O(\epsilon^2)$, we obtain
\begin{subequations}\label{e4}
\begin{numcases}{(\mathcal S_1)}
\partial_{t}n_{i}^{(1)}-V\partial_{x_{1}}n_{i}^{(2)}+\partial_{x_{1}}u_{i_{1}}^{(2)}+\partial_{x_{1}}(n_{i}^{(1)}u_{i_{1}}^{(1)})
+\partial_{x_{2}}u_{i_{2}}^{(1)}=0, \label{e4-1}\\
\partial_{t}u_{i_{1}}^{(1)}-V\partial_{x_{1}}u_{i_{1}}^{(2)}+u_{i_{1}}^{(1)}\partial_{x_{1}}u_{i_{1}}^{(1)}\nonumber \\
 \ \ \ \ =-\partial_{x_{1}}n_{e}^{(2)}-n_{e}^{(1)}\partial_{x_{1}}n_{e}^{(1)}
+\frac{H^{2}}{4}\partial_{x_{1}}^{3}n_{e}^{(1)}\label{e4-2},\\
\partial_{x_{1}}^{2}n_{e}^{(1)}=n_{e}^{(2)}-n_{i}^{(2)}.\label{e4-3}
\end{numcases}
\end{subequations}
From \eqref{e1}, we may assume that
\begin{equation}\label{e5}
n_{e}^{(1)}=n_{i}^{(1)}, \ u_{i_{1}}^{(1)}=Vn_{i}^{(1)},
\end{equation}
which also make \eqref{e1} valid, thanks to \eqref{e2}. Then from \eqref{e3}, we have
\begin{equation}\label{e6}
\partial_{x_{1}}u_{i_{2}}^{(1)}=V\partial_{x_{2}}n_{i}^{(1)},
\end{equation}
thanks to \eqref{e5}. Therefore, to solve $n_{e}^{(1)},n_{i}^{(1)},u_{i_{1}}^{(1)},u_{i_{2}}^{(1)}$, we need only to solve $n_{i}^{(1)}$.

To find out the equation satisfied by $n_{i}^{(1)}$, we take $\partial_{x_{1}}$ of \eqref{e4-3}, multiply \eqref{e4-1} by $V$,
and then add them to \eqref{e4-2}. We obtain
\begin{equation}\label{e7}
\partial_{t}n_{i}^{(1)}+(\frac{3}{2}V+\frac{1}{2V})n_{i}^{(1)}\partial_{x_{1}}n_{i}^{(1)}
+\frac{1}{2V}(1-\frac{H^{2}}{4})\partial_{x_{1}}^{3}n_{i}^{(1)}+\frac{1}{2}\partial_{x_{2}}u_{i_{2}}^{(1)}=0.
\end{equation}
Differentiating this equation with respect to $x_{1}$, and using \eqref{e6}, we obtain
\begin{equation}\label{e8}
\partial_{x_{1}}\left(\partial_{t}n_{i}^{(1)}+(\frac{3}{2}V+\frac{1}{2V})n_{i}^{(1)}\partial_{x_{1}}n_{i}^{(1)}
+\frac{1}{2V}(1-\frac{H^{2}}{4})\partial_{x_{1}}^{3}n_{i}^{(1)}\right)+\frac{V}{2}\partial_{x_{2}}^{2}n_{i}^{(1)}=0.
\end{equation}
This is the quantum Kadomtsev-Petviashvili-I (QKP-I) equation for $H>2$, quantum Kadomtsev-Petviashvili-II (QKP-II) equation for $0<H<2$ and dispersive-less Kadomtsev-Petviashvili (dKP) equation for the critical case $H=2$, satisfied by the first order profile $n_{i}^{(1)}$.

We have the following well-posedness theorem for QKP-I and QKP-II, which  was shown by using PDE techniques \cite{BJ} and \cite{Mol}, respectively.
\begin{theorem}\label{thms}
The Cauchy problems for the QKP-II (QKP-I) eqution \eqref{e8} are globally (locally) well-posed in $H^{s}$ for $s\geq0$.
\end{theorem}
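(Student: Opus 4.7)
The plan is to reduce \eqref{e8} to a canonical KP form by scaling and then invoke existing well-posedness theory. Set $V=\pm 1$ and let $\alpha=\frac{3V}{2}+\frac{1}{2V}$, $\beta=\frac{1}{2V}(1-\frac{H^{2}}{4})$, $\gamma=\frac{V}{2}$, so that \eqref{e8} takes the form $\partial_{x_1}(\partial_t u+\alpha u\partial_{x_1}u+\beta\partial_{x_1}^{3}u)+\gamma\partial_{x_2}^{2} u=0$. Under a rescaling $u(t,x_1,x_2)=\mu_0\tilde u(\mu_1 t,\mu_2 x_1,\mu_3 x_2)$ with $\mu_0,\mu_1,\mu_2,\mu_3$ chosen to simultaneously normalize the three coefficients, \eqref{e8} becomes the canonical equation $\partial_{x_1}(\partial_t\tilde u+\tilde u\partial_{x_1}\tilde u+\partial_{x_1}^{3}\tilde u)+\sigma\partial_{x_2}^{2}\tilde u=0$ with $\sigma=\mathrm{sgn}(\beta\gamma)$. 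Since $\gamma>0$, this is the classical KP-II equation when $0<H<2$ and the KP-I equation when $H>2$; the rescaling preserves the Sobolev spaces $H^{s}$ up to equivalent norms.

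For $0<H<2$ (QKP-II), I would run Bourgain's method in the $X^{s,b}$-spaces adapted to the KP-II dispersion symbol $p(\xi)=\xi_1^{3}-\xi_2^{2}/\xi_1$. The nonlinearity $\tfrac{1}{2}\partial_{x_1}(\tilde u^{2})$ is controlled via a bilinear estimate of the form $\|\partial_{x_1}(uv)\|_{X^{s,b-1}}\lesssim\|u\|_{X^{s,b}}\|v\|_{X^{s,b}}$ valid for $s\geq 0$ and suitable $b$, which together with the standard linear estimates closes a contraction in the Duhamel formulation and yields local well-posedness in $H^{s}$. The $L^{2}$-norm of the solution is conserved (as for the classical KP-II), and this a priori bound extends the local solution to arbitrary times, producing global well-posedness in $H^{s}$ for $s\geq 0$, as in \cite{Mol}.

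For $H>2$ (QKP-I) the same $X^{s,b}$ bilinear estimate fails: the resonance function associated to the symbol $p(\xi)=\xi_1^{3}+\xi_2^{2}/\xi_1$ vanishes on a large set due to strong low-frequency resonances in $\xi_1$, and Molinet--Saut--Tzvetkov-type obstructions preclude a smooth Picard iteration in standard Bourgain spaces. The plan here, following \cite{BJ}, is to work in a resolution space that augments $X^{s,b}$ by a low-frequency correction and to combine short-time bilinear estimates with Strichartz-type inequalities, obtaining local (but not global) well-posedness in $H^{s}$ for $s\geq 0$. The main obstacle is precisely this low-frequency resonance analysis, which accounts both for the local-only nature of the QKP-I theorem and for the technical burden of its proof; my proposal is to import the resolution-space machinery of \cite{BJ} as a black box rather than reconstruct it from scratch, since the equation after Step 1 coincides with the models treated there.
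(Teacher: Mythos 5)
Your reduction of \eqref{e8} to the canonical KP form by scaling is correct, and your treatment of the QKP-II case ($0<H<2$) is sound: Bourgain's $X^{s,b}$ bilinear estimate gives local well-posedness in $H^{s}$ for $s\geq 0$, and conservation of the $L^{2}$ norm upgrades this to global well-posedness. Note, however, that this is the content of \cite{BJ}, not \cite{Mol}; you have the two references interchanged throughout (\cite{BJ} is Bourgain's KP-II paper, \cite{Mol} is Molinet--Saut--Tzvetkov's KP-I paper). The paper itself offers no argument beyond these citations, so for the QKP-II half your sketch is essentially the intended proof.

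The QKP-I half contains a genuine gap. You correctly identify that the bilinear estimate fails for the KP-I symbol, but the proposed repair --- importing ``the resolution-space machinery of \cite{BJ} as a black box'' --- cannot work: \cite{BJ} treats KP-II only and contains no such machinery for KP-I. More fundamentally, no iteration scheme in plain $H^{s}$ can yield local well-posedness of KP-I for all $s\geq 0$, because the KP-I flow map is known to fail to be $C^{2}$ (indeed, to fail to be uniformly continuous on bounded sets) in these spaces, precisely on account of the low-frequency resonances you mention. The results that do exist for KP-I (\cite{Mol}, and later Ionescu--Kenig--Tataru) require either substantially more regularity together with energy methods, or anisotropic spaces encoding a low-frequency condition such as $\partial_{x_{1}}^{-1}\partial_{x_{2}}u\in L^{2}$ --- not $H^{s}$ for $s\geq 0$. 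So your argument does not deliver the stated conclusion in the QKP-I case; a correct version would assert local well-posedness in one of these smaller spaces, which is in fact all that the remainder analysis of Section 3 uses, since the profiles there are taken in $H^{s}$ with $s\geq 5$ and carry the extra structure coming from \eqref{e6}.
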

When $H=2$, \eqref{e8} degenerates to the dispersive-less Kadomtsev-Petviashvili (dKP) equation which was derived earlier than the KP equation by Lin, Reissner and Tsien \cite{LRT} and Khokhlov and Zabolotskaya \cite{Z} in three spatial dimensions. The local well-posedness of the Cauchy problem for dispersive-less KP equation(dKP) has been proved in certain Sobolev spaces in \cite{R}.
\begin{theorem}\label{thms0}
The Cauchy problem for the dKP eqution \eqref{e8} is locally well-posed for any initial data $n_{i0}^{(1)}$ in $H^{s}$ for $s>2$.
\end{theorem}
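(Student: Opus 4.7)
The plan is to treat \eqref{e8} at $H=2$ as a quasilinear transport equation perturbed by a nonlocal, skew\nobreakdash-adjoint operator, and to close an $H^s$ energy estimate. Integrating \eqref{e8} once in $x_1$ (in the standard KP antiderivative sense), one rewrites the dKP equation for $u:=n_i^{(1)}$ as
\begin{equation*}
u_t + c\, u u_{x_1} + \frac{V}{2}\,\partial_{x_1}^{-1}u_{x_2 x_2}=0, \qquad c=\tfrac{3V}{2}+\tfrac{1}{2V}.
\end{equation*}
I would first regularize by adding $\nu\Delta u$ on the right and solve the resulting parabolic equation by a standard fixed-point argument in $H^s$ for each $\nu>0$, and then derive $\nu$-independent $H^s$ bounds on a time interval depending only on $\|u_0\|_{H^s}$, passing to the limit $\nu\downarrow 0$ by compactness (Aubin--Lions).

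For the a priori estimate I apply $\partial^\alpha$ with $|\alpha|\le s$ to the equation, pair with $\partial^\alpha u$ in $L^2(\mathbb{R}^2)$, and sum. The Burgers-type nonlinearity, after a Kato--Ponce commutator estimate and one integration by parts in $x_1$, contributes a term bounded by $C\,\|u\|_{W^{1,\infty}}\,\|u\|_{H^s}^2$. The decisive observation is that the nonlocal term is exactly skew\nobreakdash-adjoint on $L^2$: letting $(\xi,\eta)$ be the Fourier variables dual to $(x_1,x_2)$, the symbol of $\partial_{x_1}^{-1}\partial_{x_2}^2$ is $-\eta^2/(i\xi)$, which is purely imaginary and odd in $\xi$, so for real-valued $v=\partial^\alpha u$
\begin{equation*}
\int_{\mathbb{R}^2} v\,\partial_{x_1}^{-1}\partial_{x_2}^2 v\,dx_1\,dx_2 = 0.
\end{equation*}
This annihilates the nonlocal piece of the energy identity, and one is left with the Riccati-type bound
\begin{equation*}
\frac{d}{dt}\|u\|_{H^s}^2 \;\le\; C\,\|u\|_{W^{1,\infty}}\,\|u\|_{H^s}^2 \;\le\; C\,\|u\|_{H^s}^3,
\end{equation*}
where the last step uses the two-dimensional Sobolev embedding $H^s(\mathbb{R}^2)\hookrightarrow W^{1,\infty}(\mathbb{R}^2)$, valid precisely for $s>2$. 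Integration gives a uniform bound on $[0,T^*]$ with $T^*\gtrsim 1/\|u_0\|_{H^s}$.

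The existence of a solution $u\in L^\infty([0,T^*];H^s)$ then follows by compactness, and the same energy-type estimate applied to the difference of two solutions (at the $L^2$ level, using that $\partial_{x_1}^{-1}\partial_{x_2}^2$ remains skew-adjoint and only $\|u\|_{W^{1,\infty}}$ of each solution is needed) delivers uniqueness and continuous dependence on the initial datum. The main obstacle, and the reason the threshold $s>2$ is sharp for this method, is the complete absence of dispersive smoothing at $H=2$: the nonlocal operator $\partial_{x_1}^{-1}\partial_{x_2}^2$ provides no gain of regularity, so the $W^{1,\infty}$ norm in Kato--Ponce must be absorbed purely by Sobolev embedding, forcing $s>d/2+1=2$. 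A secondary issue is making $\partial_{x_1}^{-1}$ rigorous near the zero $x_1$-frequency; I would adopt the standard KP antiderivative framework (working in the subspace of $H^s$ for which $\partial_{x_1}^{-1}\partial_{x_2} u$ belongs to $L^2$, equivalently imposing a vanishing $x_1$-mean condition on the data), as used in \cite{Mol,R}, after which the Fourier-side skew-symmetry argument above applies verbatim.
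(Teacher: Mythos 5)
Your outline is essentially correct, but it should be said up front that the paper does not prove Theorem \ref{thms0} at all: it simply quotes the result from the reference \cite{R} on the Khokhlov--Zabolotskaya--Kuznetsov equation (and, for the dispersive cases, from \cite{BJ,Mol}). So your self-contained argument is necessarily a different route. What you propose is the standard quasilinear scheme (parabolic regularization by $\nu\Delta$, Kato--Ponce commutator estimate for the Burgers nonlinearity, skew-adjointness of $\partial_{x_1}^{-1}\partial_{x_2}^{2}$ killing the nonlocal term, Riccati bound, Aubin--Lions, $L^2$ contraction for uniqueness), and each step checks out: the identity $\int v\,\partial_{x_1}^{-1}\partial_{x_2}^{2}v=0$ holds for real $v$ since the symbol $i\eta^{2}/\xi$ is purely imaginary and odd under $(\xi,\eta)\mapsto(-\xi,-\eta)$, the commutator term is controlled by $\|u\|_{W^{1,\infty}}\|u\|_{H^{s}}^{2}$, and $H^{s}(\mathbb{R}^{2})\hookrightarrow W^{1,\infty}$ exactly for $s>2$. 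The one point you should not relegate to a closing remark is the antiderivative: the theorem as stated in the paper reads ``for any initial data in $H^{s}$,'' whereas your energy identity requires $\partial_{x_1}^{-1}\partial_{x_2}u$ (and, for the equation itself, $\partial_{x_1}^{-1}\partial_{x_2}^{2}u$) to make sense, i.e.\ either a zero $x_1$-mean/low-frequency constraint on the data or working with the equation in its original divergence form \eqref{e8}; you flag this correctly, and it is in fact the same caveat implicit in the cited literature. In short, your proof buys a transparent, self-contained argument and makes visible why $s>2$ is the natural threshold (no dispersive smoothing at $H=2$, so $W^{1,\infty}$ must come from Sobolev embedding), at the cost of having to be careful about the functional setting for $\partial_{x_1}^{-1}$; the paper buys brevity by outsourcing the whole statement to \cite{R}.
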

The system of \eqref{e5}, \eqref{e6} and \eqref{e8} is a closed system. Once $n_{i}^{(1)}$ is solved from \eqref{e8}, we have all the other profiles $n_{e}^{(1)},u_{i_{1}}^{(1)},u_{i_{2}}^{(1)}$ from \eqref{e5}, \eqref{e6}.
From \eqref{e4-1} and \eqref{e4-3}, we may assume
\begin{equation}\label{e9}
\begin{cases}
u_{i_{1}}^{(2)}=Vn_{i}^{(2)}+\underline{u_{i_{1}}}_{kp}^{(1)},\\
n_{e}^{(2)}=n_{i}^{(2)}+\underline{n_{e}}_{kp}^{(1)},
\end{cases}
\end{equation}
where $\underline{u_{i_{1}}}_{kp}^{(1)},\underline{n_{e}}_{kp}^{(1)}$ depend only on $n_{i}^{(1)}$.

At the order $O(\epsilon^\frac{5}{2})$, we obtain
\begin{equation}\label{e10}
\begin{split}
\partial_{t}u_{i_{2}}^{(1)}&-V\partial_{x_{1}}u_{i_{2}}^{(2)}+u_{i_{1}}^{(1)}\partial_{x_{1}}u_{i_{2}}^{(1)}\\
&=-(\partial_{x_{2}}n_{e}^{(1)}+n_{e}^{(1)}\partial_{x_{2}}n_{e}^{(1)}-\frac{H^{2}}{4}\partial_{x_{1}}^{2}\partial_{x_{2}}n_{e}^{(1)}).
\end{split}
\end{equation}
By using \eqref{e9} and rearranging, we have
\begin{equation}\label{e11}
\begin{split}
V\partial_{x_{1}}u_{i_{2}}^{(2)}=&\partial_{t}u_{i_{2}}^{(1)}+u_{i_{1}}^{(1)}\partial_{x_{1}}u_{i_{2}}^{(1)}
+\partial_{x_{2}}n_{i}^{(2)}+n_{i}^{(1)}\partial_{x_{2}}n_{i}^{(1)}\\
&-\frac{H^{2}}{4}\partial_{x_{1}}^{2}\partial_{x_{2}}n_{i}^{(1)}+\underline{n_{e}}_{kp}^{(1)}.
\end{split}
\end{equation}

\subsubsection{Derivation of the Linearized QKP equation for $n_{i}^{(k)}(k\geq2)$}\label{1.12}

From \eqref{e9} and \eqref{e11}, we see that to determine $(n_{i}^{(2)},n_{e}^{(2)},u_{i_{1}}^{(2)},u_{i_{2}}^{(2)})$,
we need only to determine $n_{i}^{(2)}$.

At the order $O(\epsilon^3)$, we obtain
\begin{subequations}\label{e12}
\begin{numcases}{}
\partial_{t}n_{i}^{(2)}-V\partial_{x_{1}}n_{i}^{(3)}+\partial_{x_{1}}(u_{i_{1}}^{(3)}+n_{i}^{(1)}u_{i_{1}}^{(2)}+n_{i}^{(2)}u_{i_{1}}^{(1)})
+\partial_{x_{2}}(u_{i_{2}}^{(2)}+n_{i}^{(1)}u_{i_{2}}^{(1)})=0,\label{e12-1}\\
\partial_{t}u_{i_{1}}^{(2)}-V\partial_{x_{1}}u_{i_{1}}^{(3)}+\partial_{x_{1}}(u_{i_{1}}^{(1)}u_{i1}^{(2)})+u_{i_{2}}^{(1)}\partial_{x_{2}}u_{i_{1}}^{(1)}
\nonumber \\ \ \ \ \
=-\partial_{x_{1}}n_{e}^{(3)}-\partial_{x_{1}}(n_{e}^{(1)}n_{e}^{(2)})\nonumber\\ \ \ \ \ \ \ \ \
-\frac{H^{2}}{4}(-\partial_{x_{1}}^{3}n_{e}^{(2)}-\partial_{x_{1}}\partial_{x_{2}}^{2}n_{e}^{(1)}
+n_{e}^{(1)}\partial_{x_{1}}^{3}n_{e}^{(1)}+2\partial_{x_{1}}n_{e}^{(1)}\partial_{x_{1}}^{2}n_{e}^{(1)}),\label{e12-2}\\
\partial_{x_{1}}^{2}n_{e}^{(2)}+(\partial_{x_{1}}n_{e}^{(1)})^{2}+n_{e}^{(1)}\partial_{x_{1}}^{2}n_{e}^{(1)}
+\partial_{x_{2}}^{2}n_{e}^{(1)}-\frac{H^{2}}{4}\partial_{x_{1}}^{4}n_{e}^{(1)}
=n_{e}^{(3)}-n_{i}^{(3)}.\label{e12-3}
\end{numcases}
\end{subequations}
We take $\partial_{x_{1}}$ of \eqref{e12-3}, multiply \eqref{e12-1} by $V$,
and then add them to \eqref{e12-2}, we obtain the linearized inhomogeneous QKP equation
\begin{equation}\label{e13}
\partial_{x_{1}}\left(\partial_{t}n_{i}^{(2)}+(\frac{3}{2}V+\frac{1}{2V})\partial_{x_{1}}(n_{i}^{(1)}n_{i}^{(2)})
+\frac{1}{2V}(1-\frac{H^{2}}{4})\partial_{x_{1}}^{3}n_{i}^{(2)}\right)+\frac{V}{2}\partial_{x_{2}}^{2}n_{i}^{(2)}=\underline{A}_{kp}^{(1)},
\end{equation}
where we have used \eqref{e9} and \eqref{e11}. Here $\underline{A}_{kp}^{(1)}$ depends only on $n_{i}^{(1)}$ and
comes from the inhomogeneous dependence of $u_{i_{1}}^{(2)}$ and $n_{e}^{(2)}$ on $n_{i}^{(2)}$ in \eqref{e9}.

At the order $O(\epsilon^\frac{7}{2})$, we obtain
\begin{equation}\label{e14}
\begin{split}
\partial_{t}u_{i_{2}}^{(2)}&-V\partial_{x_{1}}u_{i_{2}}^{(3)}+u_{i_{1}}^{(1)}\partial_{x_{1}}u_{i_{2}}^{(2)}+u_{i_{1}}^{(2)}\partial_{x_{1}}u_{i_{2}}^{(1)}
+u_{i_{2}}^{(1)}\partial_{x_{2}}u_{i_{2}}^{(1)}\\
=&-\partial_{x_{2}}n_{e}^{(3)}+\partial_{x_{2}}(n_{e}^{(1)}n_{e}^{(2)})+\frac{H^{2}}{4}(-\partial_{x_{1}}^{2}\partial_{x_{2}}n_{e}^{(2)}\\
&+n_{e}^{(1)}\partial_{x_{1}}^{2}\partial_{x_{2}}n_{e}^{(1)}+\partial_{x_{1}}n_{e}^{(1)}\partial_{x_{1}x_{2}}n_{e}^{(1)}
+\partial_{x_{2}}n_{e}^{(1)}\partial_{x_{1}}^{2}n_{e}^{(1)}).
\end{split}
\end{equation}
Inductively, we can derive all the profiles $(n_{i}^{(k)},n_{e}^{(k)},\mathbf{u_{i}^{(k)}})$ for $k\geq3$. Proceeding as above, we obtain the following linearized inhomogeneous QKP equation for $k\geq3$
\begin{equation}\label{e15}
\partial_{x_{1}}\left(\partial_{t}n_{i}^{(k)}+(\frac{3}{2}V+\frac{1}{2V})\partial_{x_{1}}(n_{i}^{(1)}n_{i}^{(k)})
+\frac{1}{2V}(1-\frac{H^{2}}{4})\partial_{x_{1}}^{3}n_{i}^{(k)}\right)+\frac{V}{2}\partial_{x_{2}}^{2}n_{i}^{(k)}=\underline{A}_{kp}^{(k-1)},
\end{equation}
where the inhomogeneous term $\underline{A}_{kp}^{(k-1)}$ depends only on $(n_{i}^{(j)},n_{e}^{(j)},\mathbf{u_{i}^{(j)}})$ for $1\leq j\leq k-1$.
For $n_{i}^{(k)}$, we have
\begin{theorem}\label{thmt}
The Cauchy problem for the linearized QKP-II (QKP-I/dKP) equation \eqref{e15} $(k\geq2)$ is globally (locally) well-posed in $H^{s}$ for $s>2$.
\end{theorem}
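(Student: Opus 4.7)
\medskip
\noindent\textbf{Proof plan.} The plan is to argue by induction on $k$, exploiting the fact that \eqref{e15} is \emph{linear} in the unknown $n_i^{(k)}$. The base $k=1$ is precisely Theorems \ref{thms}--\ref{thms0}. For $k\ge 2$, the coefficient $n_i^{(1)}$ has already been produced in $H^{s'}$ (for $s'$ larger than $s$, which can be arranged since Theorem \ref{thms} works for any regularity index $\ge 0$) on the relevant time interval, and the inductive hypothesis provides the inhomogeneity $\underline{A}_{kp}^{(k-1)}$ as a given function in a suitable anisotropic Sobolev class. Thus the problem reduces to solving a linear variable-coefficient KP-type equation with a known forcing.

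First I would invert the outer $\partial_{x_1}$: setting $u=n_i^{(k)}$, $\alpha=\tfrac{3V}{2}+\tfrac{1}{2V}$, $\beta=\tfrac{1}{2V}(1-\tfrac{H^2}{4})$ and $F=\partial_{x_1}^{-1}\underline{A}_{kp}^{(k-1)}$, rewrite \eqref{e15} as
$$\partial_t u+\alpha\,\partial_{x_1}(n_i^{(1)}u)+\beta\,\partial_{x_1}^{3}u+\tfrac{V}{2}\,\partial_{x_1}^{-1}\partial_{x_2}^{2}u = F,$$
posed in the standard KP subspace of functions with vanishing mean in $x_1$. One checks, by tracing through the formal expansion \eqref{expan-formal}, that $\underline{A}_{kp}^{(k-1)}$ is itself an $x_1$-derivative (all terms contributing to it arise through $\partial_{x_1}$'s of lower-order quantities combined with the outer $\partial_{x_1}$ in \eqref{e15}), so this inversion is legitimate. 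Next, the linear semigroup $S(t)=\exp\{-t(\beta\partial_{x_1}^{3}+\tfrac{V}{2}\partial_{x_1}^{-1}\partial_{x_2}^{2})\}$ is unitary on $H^s$, and I would implement a standard energy argument: for $|\boldsymbol{\gamma}|\le s$, apply $\partial^{\boldsymbol{\gamma}}$, pair with $\partial^{\boldsymbol{\gamma}}u$, and integrate. The dispersive and anti-derivative operators are antisymmetric and drop out, while the variable-coefficient transport-like term is controlled via a Kato--Ponce commutator estimate by
$$\bigl|\langle\partial^{\boldsymbol{\gamma}}[\alpha\partial_{x_1}(n_i^{(1)}u)],\partial^{\boldsymbol{\gamma}}u\rangle\bigr|\le C\bigl(\|n_i^{(1)}\|_{W^{s,\infty}}\bigr)\|u\|_{H^s}^{2},$$
with Sobolev embedding $H^{s}\hookrightarrow L^{\infty}$ available because $s>2$. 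Gronwall then yields $\|u(t)\|_{H^s}\le e^{CT}\bigl(\|u(0)\|_{H^s}+\int_0^{t}\|F(\tau)\|_{H^s}\,d\tau\bigr)$ on any interval where $n_i^{(1)}$ is bounded in $W^{s,\infty}$. Existence follows by Galerkin or mollifier approximation together with this a priori bound, and uniqueness by the same estimate applied to the difference of two solutions (where $F=0$).

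For QKP-II ($0<H<2$) the coefficient $n_i^{(1)}$ is global by Theorem \ref{thms}, so the above argument extends to arbitrary time intervals, giving global well-posedness. For QKP-I ($H>2$) and dKP ($H=2$) only local existence of $n_i^{(1)}$ is available from Theorems \ref{thms}--\ref{thms0}, so the scheme closes only on the common life-span, yielding local well-posedness.

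I expect the main obstacle to lie in the interplay between the anti-derivative $\partial_{x_1}^{-1}$ and the variable-coefficient term $\alpha\,\partial_{x_1}(n_i^{(1)}u)$: one must verify inductively that the constraint (zero $x_1$-mean) is preserved by the forcing $\underline{A}_{kp}^{(k-1)}$, and that the initial data for $n_i^{(k)}$ can be chosen compatible with this constraint without destroying the inductive regularity. This bookkeeping, rather than any deep analytic obstruction, is the delicate point; once it is in place, the energy estimate and Gronwall step are standard and the assertions on global versus local well-posedness track directly those of Theorems \ref{thms} and \ref{thms0}.
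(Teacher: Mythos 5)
The paper does not actually prove Theorem \ref{thmt}: it is stated without proof, as a standard consequence of linear energy methods for KP-type equations, so there is no argument in the text to measure yours against. Taken on its own terms, your plan is the right one and is essentially complete in outline: linearity in $n_i^{(k)}$, skew-adjointness of $\beta\partial_{x_1}^{3}+\tfrac{V}{2}\partial_{x_1}^{-1}\partial_{x_2}^{2}$ so that the dispersive part drops out of the energy identity, one integration by parts plus the Kato--Ponce commutator bound of Lemma \ref{Lem2} for the transport term $\alpha\partial_{x_1}(n_i^{(1)}n_i^{(k)})$ (this is where $s>2$, i.e. $H^{s-1}\hookrightarrow L^{\infty}$ in two space dimensions, is used), Gronwall, and a regularized approximation for existence; the global versus local dichotomy is then inherited entirely from the lifespan of the coefficient $n_i^{(1)}$ supplied by Theorems \ref{thms} and \ref{thms0}.

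Two caveats. First, the constraint you flag --- that $\underline{A}_{kp}^{(k-1)}$ must lie in the range of $\partial_{x_1}$ --- is genuinely the delicate point, and your parenthetical justification (``all terms arise through $\partial_{x_1}$'s of lower-order quantities'') is too quick: tracing \eqref{e11}--\eqref{e13}, some contributions to $\underline{A}_{kp}^{(1)}$ enter as $\partial_{x_2}$-derivatives of quantities that are themselves $x_1$-antiderivatives (for instance $\partial_{x_2}\partial_t u_{i_2}^{(1)}$ with $u_{i_2}^{(1)}=V\partial_{x_1}^{-1}\partial_{x_2}n_i^{(1)}$ from \eqref{e6}), so the admissibility of $F=\partial_{x_1}^{-1}\underline{A}_{kp}^{(k-1)}$ requires the lower-order profiles to satisfy their own zero-$x_1$-mean conditions. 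This is the familiar constraint problem of KP approximations and should be stated as a hypothesis on the data or verified inductively rather than asserted. Second, the induction loses derivatives, since $\underline{A}_{kp}^{(k-1)}$ contains high derivatives of the profiles of order $\le k-1$; to place all of $n_i^{(1)},\dots,n_i^{(6)}$ in a common $H^{s}$ with $s\ge 5$ as \eqref{equ3'} requires, one must start $n_i^{(1)}$ at correspondingly higher regularity. Your remark about choosing $s'$ large covers the coefficient but needs to be applied to the forcing as well. Neither point invalidates the scheme.
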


\subsection{Main result}\label{sect-rem} To show that $n_{i}^{(1)}$ converges to a solution of the QKP equation \eqref{e8} as $\epsilon\rightarrow0$, we must make the above procedure rigorous. Let $(n_{e},n_{i},\mathbf{u_{i}})$ be the solution of the scaled system \eqref{equ1} of the following expansion

\begin{equation}\label{cut}
\begin{cases}
n_{i}=1+\epsilon n_{i}^{(1)}+\epsilon^{2}n_{i}^{(2)}+\epsilon^{3}n_{i}^{(3)}+\epsilon^{4}n_{i}^{(4)}
+\epsilon^{5}n_{i}^{(5)}+\epsilon^{6}n_{i}^{(6)}+\epsilon^{5}N_{i},\\
n_{e}=1+\epsilon n_{e}^{(1)}+\epsilon^{2}n_{e}^{(2)}+\epsilon^{3}n_{e}^{(3)}+\epsilon^{4}n_{e}^{(4)}
+\epsilon^{5}n_{e}^{(5)}+\epsilon^{6}n_{e}^{(6)}+\epsilon^{5}N_{e},\\
u_{i_{1}}=\epsilon u_{i_{1}}^{(1)}+\epsilon^{2}u_{i_{1}}^{(2)}+\epsilon^{3}u_{i_{1}}^{(3)}+\epsilon^{4}u_{i_{1}}^{(4)}
+\epsilon^{5}u_{i_{1}}^{(5)}+\epsilon^{6}u_{i_{1}}^{(6)}+\epsilon^{5}U_{1},\\
u_{i_{2}}=\epsilon^{3/2} u_{i_{2}}^{(1)}+\epsilon^{5/2}u_{i_{2}}^{(2)}+\epsilon^{7/2}u_{i_{2}}^{(3)}+\epsilon^{9/2}u_{i_{2}}^{(4)}
+\epsilon^{11/2}u_{i_{2}}^{(5)}+\epsilon^{13/2}u_{i_{2}}^{(6)}+\epsilon^{5}U_{2},
\end{cases}
\end{equation}
where $(n_{i}^{(1)}$,$n_{e}^{(1)}$,$u_{i_{1}}^{(1)}$,$u_{i_{2}}^{(1)})$ satisfies \eqref{e5}, \eqref{e6} and \eqref{e8}, ($n_{i}^{(k)}$,$n_{e}^{(k)}$,$u_{i_{1}}^{(k)}$,$u_{i_{2}}^{(k)}$) satisfies \eqref{e9}, \eqref{e11} and \eqref{e13} for $2\leq k\leq6$, and $(N_{i},N_{e},\mathbf{U})$ is the remainder. To simplify the notation slightly, we set
\begin{equation*}
\begin{cases}
\tilde {n_i}= n_i^{(1)}+\epsilon n_i^{(2)}+\epsilon^{2}n_i^{(3)}+\epsilon^{3}n_i^{(4)}+\epsilon^{4}n_i^{(5)}+\epsilon^{5}n_i^{(6)},\\
\tilde{n_e}= n_e^{(1)}+\epsilon n_e^{(2)}+\epsilon^{2}n_e^{(3)}+\epsilon^{3}n_e^{(4)}+\epsilon^{4}n_e^{(5)}+\epsilon^{5}n_e^{(6)},\\
\tilde{u_{i1}}= u_{i1}^{(1)}+\epsilon u_{i1}^{(2)}+\epsilon^{2}u_{i1}^{(3)}+\epsilon^{3}u_{i1}^{(4)}+\epsilon^{4}u_{i1}^{(5)}+\epsilon^{5}u_{i1}^{(6)},\\
\tilde{u_{i2}}=\varepsilon^{1/2}u_{i2}^{(1)}+\epsilon^{3/2}u_{i2}^{(2)}+\epsilon^{5/2}u_{i2}^{(3)}+\epsilon^{7/2}u_{i2}^{(4)}
+\epsilon^{9/2}u_{i2}^{(5)}+\epsilon^{11/2}u_{i2}^{(6)}.
\end{cases}
\end{equation*}

After careful computations, we obtain the following remainder system for $(N_{i},N_{e},\mathbf{U})$,
\begin{subequations}\label{rem}
\begin{numcases}{}\displaystyle
\partial_{t}N_{i}-\frac{V-u_{i_{1}}}{\epsilon}\partial_{x_{1}}N_{i}+\frac{\epsilon^{1/2}}{\epsilon}u_{i_{2}}\partial_{x_{2}}N_{i}
+\frac{n_{i}}{\epsilon}\partial_{x_{1}}U_{1}+\frac{\epsilon^{1/2}}{\epsilon}n_{i}\partial_{x_{2}}U_{2}\nonumber\\ \ \ \ \ \ \
+\partial_{x_{1}}\tilde{u_{i_{1}}}N_{i}+\epsilon^{1/2}\partial_{x_{2}}\tilde{u_{i_{2}}}N_{i}
+\partial_{x_{1}}\tilde{n_{i}}U_{1}+\epsilon^{1/2}\partial_{x_{2}}\tilde{n_{i}}U_{2}+\epsilon \mathcal{R}_{1}=0,\label{rem-1}\\
\partial_{t}U_{1}-\frac{V-u_{i_{1}}}{\epsilon}\partial_{x_{1}}U_{1}+\frac{\epsilon^{1/2}}{\epsilon}u_{i_{2}}\partial_{x_{2}}U_{1}
+\partial_{x_{1}}\tilde{u_{i_{1}}}U_{1}+\epsilon^{1/2}\partial_{x_{2}}\tilde{u_{i_{1}}}U_{2}+\epsilon \mathcal{R}_{2}^{(1)}\nonumber\\ \ \ \ \ \ \
=-\frac{n_{e}}{\epsilon}\partial_{x_{1}}N_{e}-\partial_{x_{1}}\tilde{n_{e}}N_{e}
+\frac{H^{2}}{4}\left(\frac{\partial_{x_{1}}^{3}N_{e}+\epsilon\partial_{x_{1}}\partial_{x_{2}}^{2}N_{e}}{n_{e}}\right)\nonumber\\ \ \ \ \ \ \ \ \ \
+\frac{H^{2}}{4}\left(-\frac{A_{1}}{n_{e}^{2}}+\frac{A_{2}}{n_{e}^{3}}+\frac{\epsilon\mathcal{R}_{2}^{(2)}
+\epsilon\mathcal{R}_{2}^{(3)}}{n_{e}^{3}}\right),\label{rem-2}\\
\partial_{t}U_{2}-\frac{V-u_{i_{1}}}{\epsilon}\partial_{x_{1}}U_{2}+\frac{\epsilon^{1/2}}{\epsilon}u_{i_{2}}\partial_{x_{2}}U_{2}
+\partial_{x_{1}}\tilde{u_{i_{2}}}U_{1}+\epsilon^{1/2}\partial_{x_{2}}\tilde{u_{i_{2}}}U_{2}+\epsilon^{3/2}\mathcal{R}_{3}^{(1)}\nonumber\\ \ \ \ \ \ \
=-\frac{\epsilon^{1/2}n_{e}}{\epsilon}\partial_{x_{2}}N_{e}-\epsilon^{1/2}\partial_{x_{2}}\tilde{n_{e}}N_{e}
+\frac{\epsilon^{1/2}H^{2}}{4}\left(\frac{\partial_{x_{1}}^{2}\partial_{x_{2}}N_{e}+\epsilon\partial_{x_{2}}^{3}N_{e}}{n_{e}}\right)\nonumber\\ \ \ \ \ \ \ \ \ \ \
+\frac{\epsilon^{1/2}H^{2}}{4}\left(-\frac{B_{1}}{n_{e}^{2}}+\frac{B_{2}}{n_{e}^{3}}
+\frac{\epsilon\mathcal{R}_{3}^{(2)}+\epsilon\mathcal{R}_{3}^{(3)}}{n_{e}^{3}}\right),\label{rem-3}\\
\frac{n_{e}}{\epsilon}\partial_{x_{1}}^{2}N_{e}+n_{e}\partial_{x_{2}}^{2}N_{e}+
(\partial_{x_{1}}\tilde{n_{e}}\partial_{x_{1}}N_{e}+\epsilon\partial_{x_{2}}\tilde{n_{e}}\partial_{x_{2}}N_{e})
+\big(\epsilon^{4}(\partial_{x_{1}}N_{e})^{2}+\epsilon^{5}(\partial_{x_{2}}N_{e})^{2}\big)\nonumber\\ \ \ \ \ \ \
+(\partial_{x_{1}}^{2}\tilde{n_{e}}+\epsilon\partial_{x_{2}}^{2}\tilde{n_{e}})N_{e}+\mathcal{R}_{4}^{(1)}
-\frac{H^{2}}{4}\left(\frac{\partial_{x_{1}}^{4}N_{e}+2\epsilon\partial_{x_{1}}^{2}\partial_{x_{2}}^{2}N_{e}
+\epsilon^{2}\partial_{x_{2}}^{4}N_{e}}{n_{e}}\right)\nonumber\\ \ \ \ \ \ \
+\frac{H^{2}}{4}\left(\frac{C_{1}}{n_{e}^{2}}-\frac{C_{2}}{n_{e}^{3}}+\frac{C_{3}}{n_{e}^{4}}
+\frac{\mathcal{R}_{4}^{(2)}+\mathcal{R}_{4}^{(3)}}{n_{e}^{4}}\right)=\frac{N_{e}-N_{i}}{\epsilon^{2}}\label{rem-4},
\end{numcases}
\end{subequations}
where $\mathcal{R}_{1}$, $\mathcal{R}_{2}^{1}$, $\mathcal{R}_{2}^{2}$, $\mathcal{R}_{3}^{1}$, $\mathcal{R}_{3}^{2}$ and $\mathcal{R}_{4}^{1}$, $\mathcal{R}_{4}^{2}$ only depend on ($n_{i}^{(k)}$,$n_{e}^{(k)}$,$u_{i_{1}}^{(k)}$,$u_{i_{2}}^{(k)}$) for $1\leq k\leq6$, $\mathcal{R}_{2}^{3}$, $\mathcal{R}_{3}^{3}$ and $\mathcal{R}_{4}^{3}$ are smooth functions of $N_{e}$, and do not involve any derivatives of $N_{e}$. For clarity, we put the concrete expressions of $A_{i}, B_{i}(1\leq i\leq2)$ and $C_{j}(1\leq j\leq3)$ in Appendix and give the estimates of $\mathcal{R}_{i}^{j}$ in Lemma \ref{Lem1}.

We need to derive uniform in $\epsilon$ estimates for the remainder $(N_{e},N_{i},U_{1},U_{2})$, to make the above derivation rigorous. From Theorem \ref{thms}, we may assume that the known profiles $(\tilde{n_{i}},\tilde{n_{e}},\tilde{\mathbf{u_{i}}})$ are smooth enough
such that there exist some $C>0$ and some $s\geq5$,
\begin{equation}\label{equ3'}
\begin{split}
\sup_{[0,\tau_{\ast}]}\|(\tilde{n_{i}},\tilde{n_{e}},\tilde{\mathbf{u_{i}}})\|_{H^{s}}\leq C ,
\end{split}
\end{equation}
where $\tau_{\ast}$ is the existence time in Theorem \ref{thms}/\ref{thms0}. The basic plan is to estimate some uniform bound for $(N_{e},\mathbf{U})$ first and then recover the estimate for $N_{i}$ from the estimate of $N_{e}$ by the equation \eqref{rem}. We want to apply the Gronwall lemma to complete the proof. To apply the Gronwall inequality to complete the proof, we define the triple norm
\begin{equation}\label{|||}
\begin{split}
|\!|\!|(N_{e},N_{i},\mathbf{U})|\!|\!|_{\epsilon}^{2}: =&\sum_{0\leq\alpha+\beta\leq3}\epsilon^{\alpha+2\beta} \|\partial_{x_{1}}^{\alpha}\partial_{x_{2}}^{\beta}N_{i}\|_{L^{2}}^{2} +\sum_{0\leq\alpha+\beta\leq4}\epsilon^{\alpha+2\beta} \|\partial_{x_{1}}^{\alpha}\partial_{x_{2}}^{\beta}(U_{1},U_{2}) \|_{L^{2}}^{2}\\
&+\sum_{0\leq\alpha+\beta\leq7}\epsilon^{\alpha+2\beta} \|\partial_{x_{1}}^{\alpha}\partial_{x_{2}}^{\beta}N_{e}\|_{L^{2}}^{2}.
\end{split}
\end{equation}
We note that this norm is anisotropic in the sense that the powers of $\varepsilon$ in the two spatial directions are different, in accordance with the anisotropic structure of the QKP equation as well as the Gardner-Morikawa transform \eqref{trans}.

Our main result of this paper is the following
\begin{theorem}\label{thm1}
Let $s \geq5$ such that \eqref{equ3'} holds and $(n_{i}^{(j)},n_{e}^{(j)},\mathbf{u_{i}}^{(j)})\in H^{s}$ $(1\leq j\leq6)$ be a solution on the interval $[0,\tau_{\ast})$ constructed in Theorem \ref{thms}/\ref{thms0} and Theorem \ref{thmt} for the QKP/dKP equations with initial data $(n_{i0}^{(j)},n_{e0}^{(j)},\mathbf{u}_{\mathbf{i}0}^{(j)})\in H^{s}$. Assume the initial data $(n_{i0},n_{e0},\mathbf{u}_{\mathbf{i}0})$ for the QEP system \eqref{equ1}-\eqref{tum} has the expansion of the form \eqref{cut} and $(N_{i},N_{e},\mathbf{U})|_{t=0}=(N_{i0},N_{e0},\mathbf{U}_{0})$ satisfy \eqref{rem}.
Then for  $0<\tau<\tau_{\ast}$, there exists $\epsilon_{0}>0$ such that if $0<\epsilon<\epsilon_{0}$, the solution of the QEP system \eqref{equ1}-\eqref{tum} with initial data $(n_{i0},n_{e0},\mathbf{u}_{\mathbf{i}0})$ can be expressed as in the expansion \eqref{cut}, and the solutions $(N_{i},N_{e},\mathbf{U})$ of \eqref{rem} satisfy
\begin{equation}
\begin{split}\label{ineq}
\sup_{[0,\tau]}|\!|\!|(N_{e},\mathbf{U},N_{i})|\!|\!|_{\epsilon}^{2}\leq C(1+|\!|\!|(N_{e0},\mathbf{U}_{0},N_{i0})|\!|\!|_{\epsilon}^{2}).
\end{split}
\end{equation}
\end{theorem}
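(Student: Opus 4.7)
The plan is to prove the theorem via a continuation/bootstrap argument combined with weighted energy estimates in the anisotropic triple norm $|\!|\!|\cdot|\!|\!|_\epsilon$. First I would establish short-time existence for the remainder system \eqref{rem}: the Bohm potential provides fourth-order regularization in the Poisson equation \eqref{rem-4}, and the hyperbolic part of the $(N_i,\mathbf{U})$ subsystem is otherwise standard, so an approximation scheme (e.g., a parabolic regularization or a Galerkin truncation) yields a solution on a possibly $\epsilon$-dependent maximal interval $[0,T_\epsilon)$. The content of the theorem is then to show that on $[0,\tau]$ the triple norm cannot escape a bound depending only on the initial data; by continuation this forces $T_\epsilon>\tau$.

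The core energy estimate proceeds as follows. For each multi-index $(\alpha,\beta)$ in the admissible range of the triple norm, I would apply $\partial_{x_1}^{\alpha}\partial_{x_2}^{\beta}$ to the equations \eqref{rem-1}--\eqref{rem-4}, test against $\epsilon^{\alpha+2\beta}\partial_{x_1}^{\alpha}\partial_{x_2}^{\beta}(N_i,U_1,U_2,N_e)$ with suitable positive weights, and sum. The anisotropic weights $\epsilon^{\alpha+2\beta}$ are forced by the Gardner--Morikawa transform \eqref{trans}: each additional $x_2$-derivative carries a factor $\epsilon^{1/2}$ in the rescaled equations \eqref{equ2}, and squaring in the energy produces the exponent $2\beta$. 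Sobolev embedding (using $s\geq 5$) and the background bound \eqref{equ3'} reduce the nonlinear commutator and remainder terms $\mathcal{R}_i^j$ (estimated in Lemma \ref{Lem1}) to products of smooth profiles with controllable derivatives of $(N_e,\mathbf{U},N_i)$. Gathering all the estimates produces a differential inequality of the form
\begin{equation*}
\frac{d}{dt}|\!|\!|(N_e,\mathbf{U},N_i)|\!|\!|_\epsilon^{2}\leq C\,|\!|\!|(N_e,\mathbf{U},N_i)|\!|\!|_\epsilon^{2}+C+\epsilon^{1/2}\,\mathcal{Q}\bigl(|\!|\!|(N_e,\mathbf{U},N_i)|\!|\!|_\epsilon\bigr),
\end{equation*}
where $\mathcal{Q}$ is a polynomial coming from self-interactions of the remainder. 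Gronwall's inequality, together with a bootstrap absorbing the $\epsilon^{1/2}\mathcal{Q}$ contribution once $\epsilon$ is small, then yields \eqref{ineq}.

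The principal obstacle is the handling of the singular $O(\epsilon^{-1})$ transport and pressure couplings in \eqref{rem-1}--\eqref{rem-3}, since the cold-plasma Euler part is not Friedrichs symmetrizable and a naive symmetrization of $(N_i,U_1,U_2,N_e)$ is unavailable. This is precisely why the triple norm carries four extra derivatives on $N_e$ and one extra on $\mathbf{U}$ as compared to $N_i$: the Poisson equation \eqref{rem-4}, multiplied by $\epsilon^{2}$, gives
\begin{equation*}
N_i=N_e-\epsilon\, n_e\partial_{x_1}^{2}N_e-\epsilon^{2}n_e\partial_{x_2}^{2}N_e+\frac{\epsilon^{2}H^{2}}{4n_e}\bigl(\partial_{x_1}^{4}+2\epsilon\partial_{x_1}^{2}\partial_{x_2}^{2}+\epsilon^{2}\partial_{x_2}^{4}\bigr)N_e+(\text{l.o.t.}),
\end{equation*}
so substituting into the energy identity for \eqref{rem-1} converts the singular pairings $\epsilon^{-1}(\partial_{x_1}N_e)\,U_1+\epsilon^{-1}n_i(\partial_{x_1}U_1)\,N_i$ into an exact time-derivative of a conservative quadratic form plus a positive Bohm contribution of definite sign, irrespective of whether $H>2$, $H=2$, or $0<H<2$ (the sign of the dispersive coefficient affects the type of the limiting QKP but not the $L^{2}$-based energy, which sees only the magnitude $H^{2}/4$). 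Verifying that this cancellation survives at every level $(\alpha,\beta)$ of the triple norm, and that commutators between $\partial_{x_1}^{\alpha}\partial_{x_2}^{\beta}$ and the variable coefficients $(V-u_{i_{1}})/\epsilon$, $n_e/\epsilon$ do not generate net negative powers of $\epsilon$ once the anisotropic weights are taken into account, is the technical heart of the argument and will occupy Propositions \ref{P1} and \ref{P2}.
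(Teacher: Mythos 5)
Your outline captures the essential mechanism of the paper's proof: a bootstrap assumption \eqref{priori}, anisotropic weights $\epsilon^{\alpha+2\beta}$ matched to the Gardner--Morikawa scaling, cancellation of the $O(\epsilon^{-1})$ couplings by trading $N_i$ for $N_e$ through the Poisson relation \eqref{rem-4} so that the singular pairings become exact time derivatives of quadratic forms in $N_e$ with positive Bohm weights (indeed $\int\frac{n_e}{n_i}(\partial^{\alpha}N_e)^2$, $\int(\frac{n_e^2}{n_i}+\frac{H^2}{4}\frac{1}{n_en_i})(\partial^{\alpha+1}N_e)^2$, \dots, exactly as you predict, and independent of the sign of $1-H^2/4$), followed by Gronwall and smallness of $\epsilon$. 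The organization differs in one respect worth flagging: you propose to test all four equations \eqref{rem-1}--\eqref{rem-4} against the corresponding weighted derivatives of $(N_i,U_1,U_2,N_e)$ and symmetrize, whereas the paper runs the evolution energy \emph{only} on the velocity equations \eqref{rem-2}--\eqref{rem-3}; the continuity equation \eqref{rem-1} is used as a constraint to solve for $\partial_{x_1}^{\alpha+1}U_1+\epsilon^{1/2}\partial_{x_1}^{\alpha}\partial_{x_2}U_2$ in terms of $\epsilon\partial_t N_i$, and the Poisson equation is used twice: once elliptically (Lemma \ref{L1}) to recover all $N_i$ and $N_e$ norms in the triple norm from a single scalar estimate, and once to convert $\partial_tN_i$ into $\partial_tN_e$ inside the crucial term $F_1$. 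This is why the paper needs the auxiliary bounds on $\partial_tN_i$ and $\partial_tN_e$ (Lemmas \ref{L2}--\ref{L3}), an ingredient your sketch omits but which is unavoidable in either organization: after the substitution, time derivatives of high spatial derivatives of $N_e$ appear in error terms and must be closed back onto the spatial triple norm. Your alternative of pairing \eqref{rem-1} directly with $N_i$ is not obviously equivalent, since the resulting mismatch $\epsilon^{-1}(n_iN_i-n_eN_e)$ produces top-order terms like $\epsilon\partial_{x_1}^4N_e\,\partial_{x_1}U_1$ whose absorption requires precisely the same combination of constraint substitutions; so if you pursue your version you would still end up reproducing the structure of Propositions \ref{P1}--\ref{P2} rather than obtaining a genuinely simpler symmetrized identity. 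Finally, your closing inequality with an $\epsilon^{1/2}\mathcal{Q}$ error is slightly optimistic in form; the paper obtains $C(1+\epsilon^{2}|\!|\!|\cdot|\!|\!|_\epsilon^{6})(1+|\!|\!|\cdot|\!|\!|_\epsilon^{2})$, which is closed the same way by choosing $\tilde C$ first and then $\epsilon_0$ with $\epsilon\tilde C\leq1$.
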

From \eqref{ineq}, we see that the $H^1$-norm of the remainder $(N_i,N_e,\mathbf{U})$ is bounded uniformly in $\epsilon$. Note also the Gardner-Morikawa transform \eqref{trans}, we see that
\begin{equation}
\sup_{[0,\epsilon^{-3/2}\tau]}
\left\|\left(
\begin{array}{c}
(n_{i}-1)/\epsilon\\ (n_{e}-1)/\epsilon\\ u_{i1}/\epsilon\\ u_{i2}/\epsilon^{\frac{3}{2}}
\end{array}
\right)-QKP/dKP\right\|_{H^1}\leq C\epsilon,
\end{equation}
for some $C>0$ independent of $\epsilon>0$. Here `QKP/dKP' is the solution of the first approximation $(n_i^{(1)},n_e^{(1)},\mathbf{u_i}^{(1)})$ in \eqref{e8}.

The following commutate estimates will be frequently used throughout.
\begin{lemma}[Commutator Estimate]\label{Lem2}
Let $m\geq1$ be an integer, and then the commutator which is defined by the following
\begin{equation}
\begin{split}\label{commu}
[\nabla^{m},f]g:=\nabla^{m}(fg)-f\nabla^{m}g,
\end{split}
\end{equation}
can be bounded by
\begin{equation}
\begin{split}\label{e16}
\|[\nabla^{m},f]g\|_{L^{p}}\leq \|\nabla f\|_{L^{p_{1}}}\|\nabla^{m-1}g\|_{L^{p_{2}}}+\|\nabla^{m} f\|_{L^{p_{3}}}\|g\|_{L^{p_{4}}},
\end{split}
\end{equation}
where $p,p_{2},p_{3}\in(1,\infty)$  and
\begin{equation*}
\begin{split}
\frac{1}{p}=\frac{1}{p_{1}}+\frac{1}{p_{2}}=\frac{1}{p_{3}}+\frac{1}{p_{4}}.
\end{split}
\end{equation*}
\end{lemma}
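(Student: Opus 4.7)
The plan is to combine the Leibniz formula with H\"older's inequality and the Gagliardo--Nirenberg interpolation inequality, the classical route for integer-order Kato--Ponce commutator estimates. Since $m$ is a positive integer, for any multi-index $\alpha$ with $|\alpha|=m$ the Leibniz formula gives
\[
[\partial^{\alpha},f]g=\sum_{0<\beta\leq\alpha}\binom{\alpha}{\beta}\,\partial^{\beta}f\,\partial^{\alpha-\beta}g,
\]
so $[\nabla^{m},f]g$ reduces to a finite sum of products in which $f$ carries $k$ derivatives and $g$ carries $m-k$ derivatives for some $1\leq k\leq m$; the only term that would involve no derivative on $f$ (namely $\beta=0$) is precisely the one cancelled by the commutator.

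Next, I would bound each such product by H\"older's inequality with intermediate exponents $q_1,q_2\in(1,\infty)$ satisfying $1/q_1+1/q_2=1/p$, and then apply Gagliardo--Nirenberg to each factor:
\[
\|\partial^{\beta}f\|_{L^{q_1}}\lesssim \|\nabla f\|_{L^{p_1}}^{1-\theta_k}\|\nabla^{m}f\|_{L^{p_3}}^{\theta_k},\qquad \|\partial^{\alpha-\beta}g\|_{L^{q_2}}\lesssim \|\nabla^{m-1}g\|_{L^{p_2}}^{1-\theta_k}\|g\|_{L^{p_4}}^{\theta_k},
\]
with interpolation parameter $\theta_{k}=(k-1)/(m-1)$, the case $m=1$ being immediate since then only $k=1$ arises. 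The two H\"older identities $1/p=1/p_1+1/p_2=1/p_3+1/p_4$ force the Gagliardo--Nirenberg scaling relations $1/q_1=(1-\theta_k)/p_1+\theta_k/p_3$ and $1/q_2=(1-\theta_k)/p_2+\theta_k/p_4$ to sum to exactly $1/p$, so the two choices of $q_i$ are consistent with the Hölder identity for every $k$.

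The product of the two Gagliardo--Nirenberg bounds is then of the form $A^{1-\theta_k}B^{\theta_k}$ with $A=\|\nabla f\|_{L^{p_1}}\|\nabla^{m-1}g\|_{L^{p_2}}$ and $B=\|\nabla^{m}f\|_{L^{p_3}}\|g\|_{L^{p_4}}$. Young's inequality $A^{1-\theta}B^{\theta}\leq (1-\theta)A+\theta B$ then bounds each summand by a sum of the two terms on the right of \eqref{e16}, and summing over the finitely many multi-indices $\beta$ with $1\leq|\beta|\leq m$ completes the estimate.

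The main obstacle is the book-keeping of intermediate exponents: one must check that for every $k\in\{1,\ldots,m\}$ the chosen $q_1,q_2$ lie in the admissible range for Gagliardo--Nirenberg, which is precisely why $p,p_2,p_3$ are required to lie in $(1,\infty)$, avoiding the endpoint failures of interpolation. The endpoint cases $k=1$ (corresponding to $\theta_k=0$ and producing the first term of \eqref{e16}) and $k=m$ (corresponding to $\theta_k=1$ and producing the second term) use the pairs $(p_1,p_2)$ and $(p_3,p_4)$ directly, and Gagliardo--Nirenberg interpolates between them for intermediate $k$.
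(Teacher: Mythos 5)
Your argument is correct, but it is a genuinely different route from the paper's: the paper offers no proof at all, simply citing Coifman--Meyer and Kato--Ponce, whose arguments for the general (fractional-order) commutator estimate rely on paraproduct decompositions and Calder\'on--Zygmund theory. Because the lemma here only involves an integer number of derivatives, your elementary chain --- Leibniz formula, H\"older, Gagliardo--Nirenberg interpolation of $\|\nabla^{k}f\|_{L^{q_1}}$ between $\|\nabla f\|_{L^{p_1}}$ and $\|\nabla^{m}f\|_{L^{p_3}}$ with $\theta_k=(k-1)/(m-1)$ (and the complementary interpolation for $g$), followed by Young's inequality --- is a complete, self-contained proof in the spirit of the classical Moser-type calculus inequalities; your verification that the two interpolation exponents recombine to give $1/q_1+1/q_2=1/p$ is exactly the point that makes the bookkeeping close, and the degenerate cases $k=1$, $k=m$ (and $m=1$) are correctly handled by direct H\"older with the pairs $(p_1,p_2)$ and $(p_3,p_4)$. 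Two small remarks: your proof necessarily produces a multiplicative constant $C=C(m,n)$ from the multinomial coefficients and the Gagliardo--Nirenberg constants, which the paper's statement \eqref{e16} omits but which is present in the cited sources and is what is actually used later; and your attribution of the hypotheses $p,p_2,p_3\in(1,\infty)$ to endpoint failures of interpolation is a reasonable heuristic for the integer case, though in the references these restrictions are really dictated by the boundedness of the singular-integral operators needed for the fractional version. What your approach buys is transparency and independence from harmonic-analysis machinery; what the citation buys is the stronger fractional statement and uniformity of constants.
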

\begin{proof}
The proof can be found in \cite{CM86,KP88}, for example.
\end{proof}

\section{Uniform energy estimates}\label{sect-energy}
In this section, we give the energy estimates uniformly in $\epsilon$ for the remainder $(N_{e},N_{i},\mathbf{U})$, which requires a combination of energy method and analysis of the remainder equation \eqref{rem}. To simplify the presentation, we assume that \eqref{rem} has smooth solutions in $[0,\tau_{\epsilon}]$ for $\tau_{\epsilon}>0$ depending on $\epsilon$.
Let $\tilde{C}$ be a constant independent of $\epsilon$, which will be determined later, much larger than the bound $|\!|\!|(N_{e},N_{i},\mathbf{U})(0)|\!|\!|_{\epsilon}^{2}$ of the initial data. It is classical that there exists $\tau_{\epsilon}>0$ such that in $[0,\tau_{\epsilon}]$,
\begin{equation}
\begin{split}\label{priori}
|\!|\!|(N_{e},N_{i},\mathbf{U})|\!|\!|_{\epsilon}^{2}\leq\tilde{C}.
\end{split}
\end{equation}
As a direct corollary, there exists some $\epsilon_{1}>0$ such that $n_{e} \ and  \ n_{i}$ are bounded from above and below, say $\frac{1}{2}<n_{i},n_{e}<\frac{3}{2}$ and $|\mathbf{u_{i}}|<\frac{1}{2}$ when $\epsilon<\epsilon_{1}$. Since $\mathcal{R}_{2}^{3},\mathcal{R}_{3}^{3}, \mathcal{R}_{4}^{3}$ are smooth functions of $N_{e}$, there exists some constant $C_{1}=C_{1}(\epsilon\tilde{C})$ for any $\alpha,\beta\geq0$ such that
\begin{equation*}
\begin{split}
\left|\partial_{n_{e}^{(j)}}^{\alpha} \partial_{N_{e}}^{\beta}(\mathcal{R}_{2}^{3}, \mathcal{R}_{3}^{3}, \mathcal{R}_{4}^{3})\right|\leq C_{1}=C_{1}(\epsilon\tilde{C}),
\end{split}
\end{equation*}
where $C_{1}(\cdot)$ can be chosen to be nondecreasing in its argument.

The purpose of this section is to prove Proposition \ref{P1} and \ref{P2}. Since the proof of Proposition \ref{P1} will be almost the same to that of Proposition \ref{P2}, the proof of Proposition \ref{P1} will be omitted. In Subsection \ref{2.1}, we first show three lemmas that will be frequently used later. In Subsection \ref{2.2} and Subsection \ref{2.3}, we present and prove the two main propositions. Here, we only present the details of Lemma \ref{P3}, while estimates of some similar results are postponed to Subsection \ref{2.5}. For simplicity, we use $\|\cdot\|$ instead of $\|\cdot\|_{L^{2}}$ in the following.

\subsection{Basic estimates}\label{2.1}
We first prove the following Lemmas \ref{L1}-\ref{L3}, in which we bound $N_{i}$ and $\partial_{t}N_{e}$ in terms of $N_{e}$.
\begin{lemma}\label{L1}
Let $(N_{i},N_{e},\mathbf{U})$ be a solution to \eqref{rem} and $\alpha, \beta, k\geq0$ be integer. There exist some constants $0<\epsilon_{1}<1$ and $C_{1}=C_{1}(\epsilon\tilde{C})$ such that for every $0<\epsilon<\epsilon_{1}$,
\begin{equation}\label{equL1}
\begin{split}
C_{1}^{-1}\sum_{0\leq\alpha+\beta\leq k}\epsilon^{\alpha+2\beta}\|\partial_{x_{1}}^{\alpha}\partial_{x_{2}}^{\beta}N_{i}\|^{2}
\leq & \sum_{0\leq\alpha+\beta\leq k+4}\epsilon^{\alpha+2\beta}\|\partial_{x_{1}}^{\alpha}\partial_{x_{2}}^{\beta}N_{e}\|^{2}\\
\leq & C_{1}\sum_{0\leq\alpha+\beta\leq k}\epsilon^{\alpha+2\beta}\|\partial_{x_{1}}^{\alpha}\partial_{x_{2}}^{\beta}N_{i}\|^{2}.
\end{split}
\end{equation}
\end{lemma}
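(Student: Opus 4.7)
The approach is to view the Poisson equation \eqref{rem-4} as an elliptic relation expressing $N_i$ as a $4$th-order anisotropic differential operator applied to $N_e$. Multiplying \eqref{rem-4} by $\epsilon^2$ and isolating the highest-order terms gives
\begin{equation*}
N_i = N_e - \epsilon n_e L N_e + \frac{\epsilon^2 H^2}{4 n_e} L^2 N_e + \epsilon^2 \mathcal{F},
\end{equation*}
where $L := \partial_{x_1}^2 + \epsilon\partial_{x_2}^2$ is the natural anisotropic Laplacian compatible with the Gardner-Morikawa weight $\epsilon^{\alpha+2\beta}$, and $\mathcal F$ collects the profile-dependent lower-order terms, the quadratic pieces $\epsilon^4(\partial_{x_1} N_e)^2$ and $\epsilon^5(\partial_{x_2} N_e)^2$, and the smooth function $\mathcal R_4^{(3)}(N_e)$. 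The weighted norm \eqref{|||} is tuned precisely so that $\epsilon L$ and $\epsilon^2 L^2$ effectively gain, respectively, $2$ and $4$ orders of regularity in the weighted $L^2$ scale.

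The forward direction (control of $N_i$ by $N_e$) is direct: apply $\partial_{x_1}^\alpha \partial_{x_2}^\beta$ with $\alpha+\beta \leq k$ to the identity, weight by $\epsilon^{\alpha+2\beta}$, and use Leibniz with the uniform bounds on $\tilde n_e$, on $n_e \in (\tfrac12,\tfrac32)$, and on $\mathcal R_4^{(3)}$; each term contributes to $\sum_{\alpha+\beta \leq k+4}\epsilon^{\alpha+2\beta}\|\partial^{\alpha,\beta} N_e\|^2$, with the nonlinear pieces absorbed into $C_1 = C_1(\epsilon \tilde C)$ via Sobolev embedding and \eqref{priori}. The reverse direction is the substance of the lemma and would proceed by an energy estimate combined with an algebraic bootstrap. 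Testing $\partial^{\gamma,\delta}$ of the Poisson equation against $\partial^{\gamma,\delta} N_e$ for $\gamma+\delta \leq k$, integrating by parts on the second- and fourth-order terms, and handling commutators with Lemma \ref{Lem2}, one obtains
\begin{equation*}
\|\partial^{\gamma,\delta} N_e\|^2 + \epsilon\|\partial_{x_1}\partial^{\gamma,\delta} N_e\|^2 + \epsilon^2\|\partial_{x_2}\partial^{\gamma,\delta} N_e\|^2 + \epsilon^2\|L\partial^{\gamma,\delta} N_e\|^2 \lesssim \|\partial^{\gamma,\delta} N_i\|^2 + \text{l.o.},
\end{equation*}
where unpacking $\int (Lu)^2 = \int (\partial_{x_1}^2 u)^2 + 2\epsilon\int (\partial_{x_1}\partial_{x_2} u)^2 + \epsilon^2\int (\partial_{x_2}^2 u)^2$ (via the identity $\int \partial_{x_1}^2 u \cdot \partial_{x_2}^2 u = \int (\partial_{x_1}\partial_{x_2} u)^2$) exactly reproduces the weighted sum $\sum_{\alpha+\beta=2}\epsilon^{\alpha+2\beta}\|\partial^{\alpha,\beta} u\|^2$. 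Summing in $(\gamma,\delta)$ with the proper weights then controls all weighted derivatives of $N_e$ up to order $k+2$. To gain the additional two orders and reach $k+4$, I would solve the Poisson equation algebraically for $L^2 N_e$,
\begin{equation*}
\frac{\epsilon^2 H^2}{4 n_e} L^2 N_e = N_i - N_e + \epsilon n_e L N_e - \epsilon^2 \mathcal F,
\end{equation*}
differentiate by $\partial^{\gamma,\delta}$ with $\gamma+\delta \leq k$, and take weighted $L^2$; the left-hand side then controls the weighted derivatives of $N_e$ of orders $k+2, k+3, k+4$, while the right-hand side is already bounded by $\|\partial^{\gamma,\delta} N_i\|$ and by the quantities controlled in the previous step.

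The main obstacle is the careful anisotropic bookkeeping of $\epsilon$-weights through the integration by parts, particularly for mixed-derivative terms arising from $L^2$: the structural identity $L = \partial_{x_1}^2 + \epsilon\partial_{x_2}^2$ is exactly what causes $\int (Lu)^2$ to reproduce the weighted second-order sum, and one must verify that no stray half-integer powers of $\epsilon$ appear. A subsidiary difficulty is that $\mathcal F$ contains derivatives of $N_e$ itself through the quadratic terms; the prefactors $\epsilon^4,\epsilon^5$ together with \eqref{priori} make these strictly subcritical, and they can be absorbed into the left-hand side (or into $C_1(\epsilon\tilde C)$) once $\epsilon$ is sufficiently small.
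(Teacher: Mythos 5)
Your proposal is correct and follows essentially the same route as the paper: the paper likewise exploits \eqref{rem-4} as an anisotropically weighted elliptic relation, testing it (and its $\partial_{x_{1}}^{\alpha}\partial_{x_{2}}^{\beta}$ derivatives) against $N_{e}$, $\epsilon\partial_{x_{1}}^{2}N_{e}$, $\epsilon^{2}\partial_{x_{2}}^{2}N_{e}$, $\epsilon^{2}\partial_{x_{1}}^{4}N_{e}$ and $\epsilon^{4}\partial_{x_{2}}^{4}N_{e}$ to climb from $\|N_{i}\|^{2}$ to all weighted derivatives of $N_{e}$ of order at most $k+4$, and reading the converse bound on $N_{i}$ directly off the equation exactly as in your forward direction. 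The one claim to tighten is that the weighted $L^{2}$ norm of $\epsilon^{2}(\partial_{x_{1}}^{2}+\epsilon\partial_{x_{2}}^{2})^{2}\partial_{x_{1}}^{\gamma}\partial_{x_{2}}^{\delta}N_{e}$ controls orders $k+2$ and $k+3$ as well as $k+4$ --- by Plancherel it only controls the top order --- but the missing order $k+3$ follows by anisotropic interpolation between $k+2$ and $k+4$ (the exponent $\alpha+2\beta$ in the weight being linear in $(\alpha,\beta)$), so this is a cosmetic fix rather than a gap.
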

\begin{proof}
When $k=0$, taking inner product of \eqref{rem-4} with $N_{e}$ and integration by parts, we have
\begin{align}\label{equ6}\displaystyle
\|N_{e}\|&^{2}+\epsilon\int n_{e}(\partial_{x_{1}}N_{e})^{2}+\epsilon^{2}\int n_{e}(\partial_{x_{2}}N_{e})^{2}\nonumber\\
&+\frac{\epsilon^{2}H^{2}}{4}\int\frac{(\partial_{x_{1}}^{2}N_{e})^{2}+2\epsilon(\partial_{x_{1}x_{2}}N_{e})^{2}
+\epsilon^{2}(\partial_{x_{2}}^{2}N_{e})^{2}}{n_{e}}\nonumber\\
=&-\epsilon\int\partial_{x_{1}}n_{e}N_{e}\partial_{x_{1}}N_{e}-\epsilon^{2}\int\partial_{x_{2}}n_{e}N_{e}\partial_{x_{2}}N_{e}
+\epsilon^{2}\int\big(\partial_{x_{1}}\tilde{n_{e}}\partial_{x_{1}}N_{e}+\epsilon\partial_{x_{2}}\tilde{n_{e}}\partial_{x_{2}}N_{e}\big)N_{e}\nonumber\\
&+\epsilon^{6}\int\big((\partial_{x_{1}}N_{e})^{2}+\epsilon(\partial_{x_{2}}N_{e})^{2}\big)N_{e}
+\epsilon^{2}\int\big(\partial_{x_{1}}^{2}\tilde{n_{e}}+\epsilon\partial_{x_{2}}^{2}\tilde{n_{e}}\big)N_{e}^{2}\nonumber\\
&+\epsilon^{2}\int\mathcal{R}_{4}^{(1)}N_{e}
-\frac{\epsilon^{2}H^{2}}{2}\int(\partial_{x_{1}}\frac{1}{n_{e}})\partial_{x_{1}}^{2}N_{e}\partial_{x_{1}}N_{e}\nonumber\\
&-\frac{\epsilon^{2}H^{2}}{4}\int(\partial_{x_{1}}^{2}\frac{1}{n_{e}})\partial_{x_{1}}^{2}N_{e}N_{e}
-\frac{\epsilon^{3}H^{2}}{2}\int(\partial_{x_{1}}\frac{1}{n_{e}})\partial_{x_{1}x_{2}}N_{e}\partial_{x_{2}}N_{e}\nonumber\\
&-\frac{\epsilon^{3}H^{2}}{2}\int(\partial_{x_{2}}\frac{1}{n_{e}})\partial_{x_{1}x_{2}}N_{e}\partial_{x_{1}}N_{e}
-\frac{\epsilon^{3}H^{2}}{2}\int(\partial_{x_{1}x_{2}}\frac{1}{n_{e}})\partial_{x_{1}x_{2}}N_{e}N_{e}\nonumber\\
&-\frac{\epsilon^{4}H^{2}}{2}\int(\partial_{x_{2}}\frac{1}{n_{e}})\partial_{x_{2}}^{2}N_{e}\partial_{x_{2}}N_{e}
-\frac{\epsilon^{4}H^{2}}{4}\int(\partial_{x_{2}}^{2}\frac{1}{n_{e}})\partial_{x_{2}}^{2}N_{e}\nonumber\\
&+\frac{\epsilon^{2}H^{2}}{4}\int(\frac{C_{1}}{n_{e}^{2}}-\frac{C_{2}}{n_{e}^{3}}+\frac{C_{3}}{n_{e}^{4}}
+\frac{\mathcal{R}_{4}^{(2)}+\mathcal{R}_{4}^{(3)}}{n_{e}^{4}})N_{e} +\int{N_{e}N_{i}}\nonumber\\
=&:\sum_{i=1}^{15}D_{i}.
\end{align}
Since $\frac{1}{2}<n_{e}<\frac{3}{2}$ and $H$ is a fixed constant, there exists a fixed constant $C$ such that
the LHS of \eqref{equ6} is equal or greater than $C\big(\|N_{e}\|^{2}+\epsilon\|\partial_{x_{1}}N_{e}\|^{2} +\epsilon^{2}\|\partial_{x_{2}}N_{e}\|^{2} +\epsilon^{2}\|\partial_{x_{1}}^{2}N_{e}\|^{2}+\epsilon^{3}\|\partial_{x_{1}x_{2}}N_{e}\|^{2}+\epsilon^{4}\|\partial_{x_{2}}^{2}N_{e}\|^{2}\big)$. Next, we estimate the RHS of \eqref{equ6}. For $D_{1}$, since $n_{e}=1+\epsilon\tilde{n_{e}}+\epsilon^{5}N_{e}$, there exists some constant $C$ such that
\begin{equation*}
\begin{split}
D_{1}&=-\epsilon\int(\epsilon\partial_{x_{1}}\tilde{n_{e}}+\epsilon^{5}\partial_{x_{1}}N_{e})(\partial_{x_{1}}N_{e})N_{e}\\
&\leq C(1+\epsilon^{4}\|N_{e}\|_{L^{\infty}})(\epsilon\|N_{e}\|^{2}+\epsilon^{2}\|\partial_{x_{1}}N_{e}\|^{2})\\
&\leq C(1+\epsilon^{4}\|N_{e}\|_{H^{2}})(\epsilon\|N_{e}\|^{2}+\epsilon^{2}\|\partial_{x_{1}}N_{e}\|^{2})\\
&\leq C(\epsilon\tilde{C})(\epsilon\|N_{e}\|^{2}+\epsilon^{2}\|\partial_{x_{1}}N_{e}\|^{2})\\
&\leq C_{1}(\epsilon\|N_{e}\|^{2}+\epsilon^{2}\|\partial_{x_{1}}N_{e}\|^{2}),
\end{split}
\end{equation*}
where we have used H\"older's inequality, Cauchy inequality, Sobolev embedding $H^2\hookrightarrow L^{\infty}$ and the priori assumption \eqref{priori}. Similarly, we have
\begin{equation*}
\begin{split}
D_{2\sim6}\leq C(\epsilon\|N_{e}\|^{2}+\epsilon^{2}\|\partial_{x_{1}}N_{e}\|^{2}+\epsilon^{3}\|\partial_{x_{2}}N_{e}\|^{2}).
\end{split}
\end{equation*}
Note that
\begin{equation}
\begin{split}\label{one order}
\left|\partial_{x_{1}}\left(\frac{1}{n_{e}}\right)\right|
\leq C\left(\epsilon|\partial_{x_{1}}\tilde{n_{e}}| +\epsilon^{5}|\partial_{x_{1}}N_{e}|\right),
\end{split}
\end{equation}
\begin{equation}
\begin{split}\label{two order}
\left|\partial_{x_{1}}^{2}\left(\frac{1}{n_{e}}\right)\right|
\leq C\big(\epsilon+\epsilon^{5}(|\partial_{x_{1}}N_{e}|+|\partial_{x_{1}}^{2}N_{e}|)+\epsilon^{10}|\partial_{x_{1}}N_{e}|^{2}\big),
\end{split}
\end{equation}
and
\begin{equation}
\begin{split}\label{two orders}
\left|\partial_{x_{1}x_{2}}\left(\frac{1}{n_{e}}\right)\right|
\leq C\big(\epsilon+\epsilon^{5}(|\partial_{x_{1}x_{2}}N_{e}|+|\partial_{x_{2}}N_{e}|)+\epsilon^{10}|\partial_{x_{1}}N_{e}||\partial_{x_{2}}N_{e}|\big).
\end{split}
\end{equation}
Thus similarly we have
\begin{equation*}
\begin{split}
D_{7\sim13}\leq C_{1}\big(\epsilon\|N_{e}\|^{2}+\epsilon^{2}\|\partial_{x_{1}}N_{e}\|^{2}+\epsilon^{3}\|\partial_{x_{1}}^{2}N_{e}\|^{2}
+\epsilon^{3}\|\partial_{x_{2}}N_{e}\|^{2}+\epsilon^{4}\|\partial_{x_{1}x_{2}}N_{e}\|^{2}+\epsilon^{5}\|\partial_{x_{2}}^{2}N_{e}\|^{2}\big).
\end{split}
\end{equation*}
By the expression of $C_{i}(1\leq i\leq3)$ and Lemma \ref{Lem1}, we similarly have
\begin{equation*}
\begin{split}
D_{14}\leq C_{1}\big(\epsilon\|N_{e}\|^{2}+\epsilon^{2}\|\partial_{x_{1}}N_{e}\|^{2}+\epsilon^{3}\|\partial_{x_{1}}^{2}N_{e}\|^{2}
+\epsilon^{3}\|\partial_{x_{2}}N_{e}\|^{2}+\epsilon^{4}\|\partial_{x_{1}x_{2}}N_{e}\|^{2}+\epsilon^{5}\|\partial_{x_{2}}^{2}N_{e}\|^{2}\big),
\end{split}
\end{equation*}
thanks to the priori assumption \eqref{priori} again. By virtue of Young inequality, we obtain
\begin{equation*}
\begin{split}
\int N_{e}N_{i}\leq\delta\|N_{e}\|^{2}+C_{\delta}\|N_{i}\|^{2},
\end{split}
\end{equation*}
for arbitrary $\delta>0$.
Hence, there exists some $\epsilon_1>0$ such that for $0<\epsilon<\epsilon_1$,
\begin{equation}
\begin{split}\label{equ7}
\|N_{e}\|^{2}+\epsilon\|\partial_{x_{1}}N_{e}\|^{2}+\epsilon^{2}\|\partial_{x_{1}}^{2}N_{e}\|^{2} +\epsilon^{2}\|\partial_{x_{2}}N_{e}\|^{2}+\epsilon^{3}\|\partial_{x_{1}x_{2}}N_{e}\|^{2}+\epsilon^{4}\|\partial_{x_{2}}^{2}N_{e}\|^{2}
\leq C_{1}\|N_{i}\|^{2}.
\end{split}
\end{equation}
Taking inner product of \eqref{rem-4} with $\epsilon \partial_{x_{1}}^{2}N_{e}$, $\epsilon^{2} \partial_{x_{2}}^{2}N_{e}$,  $\epsilon^{2}\partial_{x_{1}}^{4}N_{e}$,  and $\epsilon^{4}\partial_{x_{2}}^{4}N_{e}$ respectively, and applying the Cauchy inequality, Sobolev embedding $H^2\hookrightarrow L^{\infty}$ and the priori assumption \eqref{priori}, we have similarly the following inequalities
\begin{equation}
\begin{split}\label{equ8}
\epsilon&\|\partial_{x_{1}}N_{e}\|^{2}+\epsilon^{2}\|\partial_{x_{1}}^{2}N_{e}\|^{2} +\epsilon^{3}\|\partial_{x_{1}x_{2}}N_{e}\|^{2}+\epsilon^{3}\|\partial_{x_{1}}^{3}N_{e}\|^{2}
+\epsilon^{4}\|\partial_{x_{1}}^{2}\partial_{x_{2}}N_{e}\|^{2}+\epsilon^{5}\|\partial_{x_{1}}\partial_{x_{2}}^{2}N_{e}\|^{2}\\
&\leq C_{1}\big(\|N_{i}\|^{2}+\epsilon\|N_{e}\|^{2}+\epsilon^{3}\|\partial_{x_{2}}N_{e}\|^{2}+\epsilon^{5}\|\partial_{x_{2}}^{2}N_{e}\|^{2}\big),
\end{split}
\end{equation}
\begin{equation}
\begin{split}\label{equ9}
\epsilon^{2}&\|\partial_{x_{2}}N_{e}\|^{2}+\epsilon^{4}\|\partial_{x_{2}}^{2}N_{e}\|^{2} +\epsilon^{3}\|\partial_{x_{1}x_{2}}N_{e}\|^{2}
+\epsilon^{4}\|\partial_{x_{1}}^{2}\partial_{x_{2}}N_{e}\|^{2}+\epsilon^{5}\|\partial_{x_{1}}\partial_{x_{2}}^{2}N_{e}\|^{2}
+\epsilon^{6}\|\partial_{x_{2}}^{3}N_{e}\|^{2}\\
&\leq C_{1}\big(\|N_{i}\|^{2}+\epsilon\|N_{e}\|^{2}+\epsilon^{2}\|\partial_{x_{1}}N_{e}\|^{2}+\epsilon^{3}\|\partial_{x_{1}}^{2}N_{e}\|^{2}\big),
\end{split}
\end{equation}
\begin{equation}
\begin{split}\label{equ10}
\epsilon^{2}&\|\partial_{x_{1}}^{2}N_{e}\|^{2}+\epsilon^{3}\|\partial_{x_{1}}^{3}N_{e}\|^{2}
+\epsilon^{4}\|\partial_{x_{1}}^{2}\partial_{x_{2}}N_{e}\|^{2}+\epsilon^{4}\|\partial_{x_{1}}^{4}N_{e}\|^{2}
+\epsilon^{5}\|\partial_{x_{1}}^{3}\partial_{x_{2}}N_{e}\|^{2}+\epsilon^{6}\|\partial_{x_{1}}^{2}\partial_{x_{2}}^{2}N_{e}\|^{2}\\
\leq &C_{1}\big(\|N_{i}\|^{2}+\epsilon\|N_{e}\|^{2}+\epsilon\|\partial_{x_{1}}N_{e}\|^{2}+\epsilon^{3}\|\partial_{x_{2}}N_{e}\|^{2}
+\epsilon^{4}\|\partial_{x_{1}x_{2}}N_{e}\|^{2}+\epsilon^{5}\|\partial_{x_{2}}^{2}N_{e}\|^{2}\\
&+\epsilon^{7}\|\partial_{x_{2}}^{3}N_{e}\|^{2}
+\epsilon^{6}\|\partial_{x_{1}}\partial_{x_{2}}^{2}N_{e}\|^{2}\big),
\end{split}
\end{equation}
\begin{equation}
\begin{split}\label{equ11}
\epsilon^{4}&\|\partial_{x_{2}}^{2}N_{e}\|^{2}+\epsilon^{5}\|\partial_{x_{1}}\partial_{x_{2}}^{2}N_{e}\|^{2}
+\epsilon^{6}\|\partial_{x_{2}}^{3}N_{e}\|^{2}+\epsilon^{6}\|\partial_{x_{1}}^{2}\partial_{x_{2}}^{2}N_{e}\|^{2}
+\epsilon^{7}\|\partial_{x_{1}}\partial_{x_{2}}^{3}N_{e}\|^{2}+\epsilon^{8}\|\partial_{x_{2}}^{4}N_{e}\|^{2}\\
\leq &C_{1}\big(\|N_{i}\|^{2}+\epsilon\|N_{e}\|^{2}+\epsilon\|\partial_{x_{1}}N_{e}\|^{2}+\epsilon^{3}\|\partial_{x_{2}}N_{e}\|^{2}
+\epsilon^{4}\|\partial_{x_{1}x_{2}}N_{e}\|^{2}+\epsilon^{3}\|\partial_{x_{1}}^{2}N_{e}\|^{2}\\
&+\epsilon^{4}\|\partial_{x_{1}}^{3}N_{e}\|^{2}
+\epsilon^{5}\|\partial_{x_{1}}^{2}\partial_{x_{2}}N_{e}\|^{2}\big).
\end{split}
\end{equation}
Putting \eqref{equ7}-\eqref{equ11} together, we obtain
\begin{equation}
\begin{split}\label{equ12}
\sum_{0\leq\alpha+\beta\leq4}\epsilon^{\alpha+2\beta}\|\partial_{x_{1}}^{\alpha}\partial_{x_{2}}^{\beta}N_{e}\|^{2}
\leq C_{1}\|N_{i}\|^{2}.
\end{split}
\end{equation}
On the other hand, by \eqref{rem-4}, it follows from the H\"older inequality, Cauchy inequality and the priori assumption \eqref{priori} that
\begin{equation}
\begin{split}\label{equ13}
C_{1}^{-1}\|N_{i}\|^{2}\leq\sum_{0\leq\alpha+\beta\leq4} \epsilon^{\alpha+2\beta}\|\partial_{x_{1}}^{\alpha} \partial_{x_{2}}^{\beta}N_{e}\|^{2}.
\end{split}
\end{equation}
Combining \eqref{equ12} with \eqref{equ13}, we deduce the inequality \eqref{equL1} for $k=0$. For higher order inequalities, we differentiate \eqref{rem-4} with $\partial_{x_{1}}^{\alpha}$ and $\partial_{x_{2}}^{\beta} \ (\alpha, \beta=k+2, k\geq1) $ and then take inner product with $\epsilon^{\alpha}\partial_{x_{1}}^{\alpha}N_{e}$ and $\epsilon^{2\beta}\partial_{x_{2}}^{\beta}N_{e}$ separately, and then putting the results with \eqref{equ12} together, thus we obtain the RHS of inequality \eqref{equL1}.  On the other hand,  differentiating \eqref{rem-4} with $\partial_{x_{1}}^{\alpha}\partial_{x_{2}}^{\beta} \ (\alpha+\beta\geq k)$  and then taking inner product with $\epsilon^{\alpha+2\beta}\partial_{x_{1}}^{\alpha}\partial_{x_{2}}^{\beta}N_{i}$  separately. The Lemma then follows by the same procedure of the above.

Recall $|\!|\!|(N_{e},\mathbf{U})|\!|\!|_{\epsilon}$ in \eqref{|||}. In fact, we only need $0\leq k \leq3$ in Lemma \ref{L1}.
\end{proof}

\begin{lemma}\label{L2}
 Let $(N_{i},N_{e},\mathbf{U})$ be a solution to \eqref{rem} and $\alpha, \beta, k\geq0$ be integer. There exist some constants $C$ and $C_{1}=C_{1}(\epsilon\tilde{C})$ such that
\begin{equation}\label{equL2}
\begin{split}
\epsilon\|\epsilon\sum_{0\leq\alpha+\beta\leq k}\epsilon^{\alpha+2\beta}\partial_{t}\partial_{x_{1}}^{\alpha}\partial_{x_{2}}^{\beta}N_{i}\|^{2}
\leq C&
\Bigg(\sum_{0\leq\alpha+\beta\leq k+1}\epsilon^{\alpha+2\beta}(\|\partial_{x_{1}}^{\alpha}\partial_{x_{2}}^{\beta}U_{1}\|^{2}
+\|\partial_{x_{1}}^{\alpha}\partial_{x_{2}}^{\beta}U_{2}\|^{2})\\
&+\sum_{0\leq\alpha+\beta\leq k+5}\epsilon^{\alpha+2\beta}\|\partial_{x_{1}}^{\alpha}\partial_{x_{2}}^{\beta}N_{e}\|^{2}\Bigg)+C\epsilon.
\end{split}
\end{equation}
\end{lemma}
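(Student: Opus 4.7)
The plan is to derive \eqref{equL2} algebraically from the continuity equation \eqref{rem-1} rather than via an energy-type identity. The key point is that \eqref{rem-1} already expresses $\partial_t N_i$ in terms of spatial derivatives of $N_i, U_1, U_2$ and a known inhomogeneity, and the singular $1/\epsilon$ factors disappear as soon as we multiply through by $\epsilon$. Concretely, I would rewrite \eqref{rem-1} as
\begin{equation*}
\epsilon\,\partial_{t}N_{i}=(V-u_{i_{1}})\partial_{x_{1}}N_{i}-\epsilon^{1/2}u_{i_{2}}\partial_{x_{2}}N_{i}-n_{i}\partial_{x_{1}}U_{1}-\epsilon^{1/2}n_{i}\partial_{x_{2}}U_{2}-\epsilon\bigl(\partial_{x_{1}}\tilde{u_{i_{1}}}N_{i}+\cdots\bigr)-\epsilon^{2}\mathcal{R}_{1},
\end{equation*}
so that every coefficient on the right is $O(1)$ in $\epsilon$ and controlled in $L^{\infty}$ by the \emph{a priori} bound \eqref{priori} combined with $H^{2}\hookrightarrow L^{\infty}$ (recall $\tfrac{1}{2}<n_{i}<\tfrac{3}{2}$ and $|u_{i_{1}}|,|u_{i_{2}}|<\tfrac{1}{2}$).

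Next, for each $(\alpha,\beta)$ with $\alpha+\beta\le k$, I would apply $\partial_{x_{1}}^{\alpha}\partial_{x_{2}}^{\beta}$ to the above identity. The products with variable coefficients such as $(V-u_{i_{1}})\partial_{x_{1}}N_{i}$ and $n_{i}\partial_{x_{1}}U_{1}$ are handled by the Leibniz rule and the commutator estimate in Lemma \ref{Lem2}, distributing derivatives onto either $N_{i}$, $U_{1}$, $U_{2}$ (controlled via the triple norm) or onto $\tilde{n_{i}},\tilde{u_{i_{1}}},\tilde{u_{i_{2}}}$ and $N_{e}$ (controlled by \eqref{equ3'} and \eqref{priori} through Sobolev embedding). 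Squaring in $L^{2}$, multiplying by $\epsilon^{\alpha+2\beta}$, and summing over $\alpha+\beta\le k$ yields the bound
\begin{equation*}
\sum_{0\le\alpha+\beta\le k}\epsilon^{\alpha+2\beta}\|\epsilon\,\partial_{t}\partial_{x_{1}}^{\alpha}\partial_{x_{2}}^{\beta}N_{i}\|^{2}\le C_{1}\!\!\sum_{0\le\alpha+\beta\le k+1}\!\!\epsilon^{\alpha+2\beta}\bigl(\|\partial_{x_{1}}^{\alpha}\partial_{x_{2}}^{\beta}(U_{1},U_{2})\|^{2}+\|\partial_{x_{1}}^{\alpha}\partial_{x_{2}}^{\beta}N_{i}\|^{2}\bigr)+C\epsilon^{3},
\end{equation*}
where the last term collects the contribution of $\epsilon^{2}\mathcal{R}_{1}$ (which is smooth and $\epsilon$-uniformly bounded by Lemma \ref{Lem1}). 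Notice that a $\partial_{x_{2}}$ that lands on $N_{i}$ or $U_{2}$ is accompanied by the prefactor $\epsilon^{1/2}$ in the equation, which precisely matches the anisotropic weight $\epsilon^{2\beta}$ in the triple norm, so no weight is lost.

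Finally, to turn the $N_{i}$-norms on the right-hand side into $N_{e}$-norms and arrive at \eqref{equL2}, I invoke Lemma \ref{L1} at the level $k+1$: this converts $\sum_{\alpha+\beta\le k+1}\epsilon^{\alpha+2\beta}\|\partial_{x_{1}}^{\alpha}\partial_{x_{2}}^{\beta}N_{i}\|^{2}$ into $C_{1}\sum_{\alpha+\beta\le k+5}\epsilon^{\alpha+2\beta}\|\partial_{x_{1}}^{\alpha}\partial_{x_{2}}^{\beta}N_{e}\|^{2}$, matching the second sum on the right-hand side of \eqref{equL2}. The bound $C\epsilon^{3}\le C\epsilon$ absorbs into the stated $C\epsilon$ term. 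I expect the main bookkeeping obstacle to be checking that the commutators produced when applying $\partial_{x_{1}}^{\alpha}\partial_{x_{2}}^{\beta}$ to the nonlinear terms $\tfrac{V-u_{i_{1}}}{\epsilon}\partial_{x_{1}}N_{i}$ and $\tfrac{n_{i}}{\epsilon}\partial_{x_{1}}U_{1}$ (which contain the remainder itself through $u_{i_{1}},n_{i}$) do not generate powers of $N_{e}$ or $\mathbf{U}$ that exceed the levels $k+5$ and $k+1$ respectively; this uses that $u_{i_{1}}=\tilde{u_{i_{1}}}\epsilon+\epsilon^{5}U_{1}$ and similarly for $n_{i}$, so the remainder pieces come with the extra $\epsilon^{5}$ factor that absorbs any derivative excess via the priori assumption \eqref{priori}.
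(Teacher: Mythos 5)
Your proposal follows essentially the same route as the paper: solve \eqref{rem-1} algebraically for $\epsilon\partial_{t}N_{i}$, apply $\partial_{x_{1}}^{\alpha}\partial_{x_{2}}^{\beta}$ and take weighted $L^{2}$-norms using the a priori bound \eqref{priori}, the $L^\infty$ control of the coefficients, and Lemma \ref{Lem1} for $\mathcal{R}_{1}$, then convert the resulting $N_{i}$-norms into $N_{e}$-norms via Lemma \ref{L1} (applied one order up, which is exactly where the $k+5$ comes from). The argument and bookkeeping of the anisotropic weights are correct and match the paper's proof.
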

\begin{proof}
From \eqref{rem-1}, we have
\begin{equation*}
\begin{split}
\epsilon\partial_{t}N_{i}=&(V-u_{i1})\partial_{x_{1}}N_{i}-\epsilon^{\frac{1}{2}}u_{i2}\partial_{x_{2}}N_{i}
-n_{i}\partial_{x_{1}}U_{1}-\epsilon^{\frac{1}{2}}n_{i}\partial_{x_{2}}U_{2}\\
&-\epsilon(\partial_{x_{1}}\tilde{u_{i1}}
+\epsilon^{\frac{1}{2}}\partial_{x_{2}}\tilde{u_{i2}})N_{i}
-\epsilon\partial_{x_{1}}\tilde{n_{i}}U_{1}-\epsilon\epsilon^{\frac{1}{2}}\partial_{x_{2}}\tilde{n_{i}}U_{2}-\epsilon^{2}\mathcal{R}_{1}.
\end{split}
\end{equation*}
Since $\frac{1}{2}<n_{i}<\frac{3}{2}$ and $|\mathbf{u_{i}}|<\frac{1}{2}$, taking $L^{2}$-norm yields
\begin{equation*}
\begin{split}
\|\epsilon\partial_{t}N_{i}\|^2\leq &\|(V-u_{i})\partial_{x_{1}}N_{i}\|^2 +\epsilon\|u_{i2}\partial_{x_{2}}N_{i}\|^2
+\|n_{i}\partial_{x_{1}}U_{1}\|^2 +\epsilon\|n_{i}\partial_{x_{2}}U_{2}\|^2 +\epsilon^2\|\partial_{x_{1}}\tilde u_{i1}N_{i}\|^2\\
&+\epsilon^3\|\partial_{x_{2}}\tilde u_{i2}N_{i}\|^2
+ \epsilon^2\|\partial_{x_{1}}\tilde n_{i}U_{1}\|^2 + \epsilon^3\|\partial_{x_{2}}\tilde n_{i}U_{2}\|^2+\epsilon^4\|\mathcal R_1\|^2\\
\leq& C\big(\|\partial_{x_{1}}N_{i}\|^2+\epsilon\|\partial_{x_{2}}N_{i}\|^2+\|\partial_{x_{1}}U_{1}\|^2+\epsilon\|\partial_{x_{2}}U_{2}\|^2\big) \\ &+C\epsilon^2\big(\epsilon^2+\|N_{i}\|^2+\|U_{1}\|^2+\|U_{2}\|^2\big).
\end{split}
\end{equation*}
Applying Lemma \ref{L1}, we have inequality for $k=0$,
\begin{equation}
\begin{split}\label{kk}
\epsilon\|\epsilon\partial_{t}N_{i}\|^2
\leq C\big(\|U_{1}\|^2+\|U_{2}\|^2+
\epsilon\|\partial_{x_{1}}U_{1}\|^2+\epsilon^{2}\|\partial_{x_{2}}U_{2}\|^2
+\sum_{0\leq\alpha+\beta\leq4}\epsilon^{\alpha+2\beta}\|\partial_{x_{1}}^{\alpha}\partial_{x_{2}}^{\beta}N_{e}\|^{2}\big).
\end{split}
\end{equation}
To prove \eqref{equL2}, we take $\epsilon^{\alpha+2\beta}\partial_{x_{1}}^{\alpha}\partial_{x_{2}}^{\beta} \ (\alpha+2\beta=k, \ k\geq1)$  of \eqref{rem-1} respectively and then sum the results with \eqref{kk}.
\end{proof}

Recall $|\!|\!|(N_{e},\mathbf{U})|\!|\!|_{\epsilon}$ in \eqref{|||}. In fact, we only need $0\leq k \leq2$ in Lemma \ref{L2}.\\

\begin{lemma}\label{L3}
 Let $(N_{i},N_{e},\mathbf{U})$ be a solution to \eqref{rem} and $\alpha, \beta, k\geq0$ be integer. There exist some constants $C_{1}=C_{1}(\epsilon\tilde{C})$ and $\epsilon_{1}>0$ such that for every $0<\epsilon<\epsilon_{1}$,
\begin{equation}\label{equL3}
\begin{split}
\sum_{0\leq\alpha+\beta\leq k+4}\epsilon^{\alpha+2\beta}\|\partial_{t}\partial_{x_{1}}^{\alpha}\partial_{x_{2}}^{\beta}N_{e}\|^{2}
\leq C\sum_{0\leq\alpha+\beta\leq k}\epsilon^{\alpha+2\beta}\|\partial_{t}\partial_{x_{1}}^{\alpha}\partial_{x_{2}}^{\beta}N_{i}\|^{2}+C_{1}.
\end{split}
\end{equation}
\end{lemma}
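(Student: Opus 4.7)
The plan is to mirror the argument of Lemma \ref{L1}, but applied to the time-differentiated Poisson-type equation \eqref{rem-4}. The point is that \eqref{rem-4} is elliptic with anisotropic principal part (from the $H^2/(4n_e)$ bi-Laplacian piece together with the $n_e/\epsilon$ term), so it gains four spatial derivatives on $N_e$ relative to $N_i$; applying $\partial_t$ preserves this elliptic structure and yields an analogous gain for $\partial_t N_e$ relative to $\partial_t N_i$.

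First I would differentiate \eqref{rem-4} in $t$ to obtain
\begin{equation*}
\frac{n_e}{\epsilon}\partial_{x_1}^2\partial_t N_e+n_e\partial_{x_2}^2\partial_t N_e
-\frac{H^2}{4n_e}\bigl(\partial_{x_1}^4\partial_t N_e+2\epsilon\partial_{x_1}^2\partial_{x_2}^2\partial_t N_e+\epsilon^2\partial_{x_2}^4\partial_t N_e\bigr)
=\frac{\partial_t N_e-\partial_t N_i}{\epsilon^2}+\mathcal F,
\end{equation*}
where $\mathcal F$ collects all commutators of $\partial_t$ with the coefficients of \eqref{rem-4}, e.g.\ $\partial_t n_e = \epsilon\partial_t\tilde{n_e}+\epsilon^5\partial_t N_e$, $\partial_t(1/n_e^j)$, $\partial_t\mathcal R_4^{(i)}$, and $\partial_t$ of $C_1,C_2,C_3$ in the Appendix. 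These are uniformly bounded in $L^\infty$ using the profile bound \eqref{equ3'} (for $\partial_t\tilde{n_e}$, etc., via the QKP/dKP evolution) and the \emph{a priori} assumption \eqref{priori} together with $H^2\hookrightarrow L^\infty$; the pieces containing $\partial_t N_e$ itself come with an extra factor of $\epsilon^5$, which is small enough to be absorbed into the coercive LHS below. For the smooth nonlinearity $\mathcal R_4^{(3)}(N_e)$, one writes $\partial_t\mathcal R_4^{(3)}=\partial_{N_e}\mathcal R_4^{(3)}\cdot\partial_t N_e$ with $|\partial_{N_e}\mathcal R_4^{(3)}|\leq C_1(\epsilon\tilde C)$ by hypothesis.

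Next, for the base case $k=0$, I would pair the time-differentiated equation successively with $\partial_t N_e$, $\epsilon\partial_{x_1}^2\partial_t N_e$, $\epsilon^2\partial_{x_2}^2\partial_t N_e$, $\epsilon^2\partial_{x_1}^4\partial_t N_e$ and $\epsilon^4\partial_{x_2}^4\partial_t N_e$, integrate by parts, and produce the anisotropic coercive quantity $\sum_{\alpha+\beta\leq 4}\epsilon^{\alpha+2\beta}\|\partial_{x_1}^\alpha\partial_{x_2}^\beta\partial_t N_e\|^2$ on the LHS, exactly as in the derivation of \eqref{equ7}–\eqref{equ11}. On the RHS the term $\epsilon^{-2}\int(\partial_t N_e-\partial_t N_i)\cdot(\text{test})$ is handled by Young's inequality, producing the $\|\partial_t N_i\|^2$ term; the $\mathcal F$ contributions are estimated by Hölder, Cauchy, Sobolev embedding, and \eqref{priori}, absorbing all $\partial_t N_e$ factors carrying $\epsilon^{5}$ into the LHS (for $\epsilon<\epsilon_1$ small enough). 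The leftover inhomogeneous pieces coming from derivatives of the profiles and $C_j$ produce the additive constant $C_1=C_1(\epsilon\tilde C)$. The higher-order inequality ($k\geq 1$) then follows by applying $\partial_{x_1}^{\alpha}\partial_{x_2}^{\beta}$ to the time-differentiated equation and pairing with the same anisotropic test functions, exactly as at the end of the proof of Lemma \ref{L1}, with commutators controlled by \eqref{commu}–\eqref{e16}.

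The principal difficulty is bookkeeping the terms in $\mathcal F$ that contain $\partial_t N_e$ through $\partial_t n_e$ or through $\partial_{N_e}\mathcal R_4^{(3)}$: naively these look self-referential on the RHS. The key observation is that each such occurrence is multiplied by $\epsilon^{5}$ (or higher) times at most fourth-order spatial derivatives of $\partial_t N_e$, and that these can be bounded in $L^\infty$ by $H^2$-control together with \eqref{priori}, hence absorbed by the coercive LHS at the cost of enlarging $C_1$; the $\epsilon^{-2}$ factor on the RHS is matched by pairing with an appropriately weighted test function (so the net weight on $\partial_t N_i$ is $\epsilon^{\alpha+2\beta}$, $\alpha+\beta\leq k$), which is what produces the stated gain of four derivatives on $\partial_t N_e$.
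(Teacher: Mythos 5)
Your proposal is correct and follows essentially the same route as the paper: differentiate the Poisson-type constraint \eqref{rem-4} in time, test against the anisotropically weighted time derivatives of $N_{e}$ to extract the coercive sum $\sum_{\alpha+\beta\leq4}\epsilon^{\alpha+2\beta}\|\partial_{t}\partial_{x_{1}}^{\alpha}\partial_{x_{2}}^{\beta}N_{e}\|^{2}$, treat the coupling term $\epsilon^{-2}(\partial_{t}N_{e}-\partial_{t}N_{i})$ by Young's inequality, absorb the $\epsilon^{5}\partial_{t}N_{e}$ self-referential contributions using \eqref{priori} and Sobolev embedding, and iterate with higher spatial derivatives for $k\geq1$. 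The only cosmetic difference is that you test the undifferentiated time-derivative equation against $\epsilon\partial_{x_{1}}^{2}\partial_{t}N_{e}$, $\epsilon^{2}\partial_{x_{1}}^{4}\partial_{t}N_{e}$, etc.\ (mirroring Lemma \ref{L1}), whereas the paper first applies $\partial_{x_{1}}^{\alpha}\partial_{x_{2}}^{\beta}$ and then tests against $\epsilon^{\alpha+2\beta}\partial_{t}\partial_{x_{1}}^{\alpha}\partial_{x_{2}}^{\beta}N_{e}$ — equivalent up to integration by parts.
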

\begin{proof}
The proof is similar to that of Lemma \ref{L1}. When $k=0$, by first taking $\partial_{t}$ of \eqref{rem-4} and then taking inner product with $\partial_{t}N_{e}$ and integration by parts, we have
\begin{align}\label{m1}
\|\partial_{t}&N_{e}\|^{2}
+\epsilon\int n_{e}(\partial_{tx_{1}}N_{e})^{2}+\epsilon^{2}\int n_{e}(\partial_{tx_{2}}N_{e})^{2}+\frac{\epsilon^{2}H^{2}}{4}\int\frac{1}{n_{e}}(\partial_{t}\partial_{x_{1}}^{2}N_{e})^{2}\nonumber\\
&+\frac{\epsilon^{3}H^{2}}{2}\int\frac{1}{n_{e}}(\partial_{t}\partial_{x_{1}x_{2}}N_{e})^{2}
+\frac{\epsilon^{4}H^{2}}{4}\int\frac{1}{n_{e}}(\partial_{t}\partial_{x_{2}}^{2}N_{e})^{2}\nonumber\\
=&-\epsilon\int\partial_{x_{1}}n_{e}\partial_{tx_{1}}N_{e}\partial_{t}N_{e}
+\epsilon\int\partial_{t}n_{e}\partial_{x_{1}}^{2}N_{e}\partial_{t}N_{e}
-\epsilon^{2}\int\partial_{x_{2}}n_{e}\partial_{tx_{2}}N_{e}\partial_{t}N_{e}\nonumber\\
&+\epsilon^{2}\int\partial_{t}n_{e}\partial_{x_{2}}^{2}N_{e}\partial_{t}N_{e}
+\epsilon^{2}\int\partial_{t}(\partial_{x_{1}}\tilde{n_{e}}\partial_{x_{1}}N_{e}
+\epsilon\partial_{x_{2}}\tilde{n_{e}}\partial_{x_{2}}N_{e})\partial_{t}N_{e}\nonumber\\
&+\epsilon^{6}\int\partial_{t}((\partial_{x_{1}}N_{e})^{2}
+\epsilon(\partial_{x_{2}}N_{e})^{2})\partial_{t}N_{e}
+\epsilon^{2}\int\partial_{t}(\partial_{x_{1}}^{2}\tilde{n_{e}}N_{e}
+\epsilon\partial_{x_{2}}^{2}\tilde{n_{e}}N_{e})\partial_{t}N_{e}\nonumber\\
&+\epsilon^{3}\int\partial_{t}\mathcal{R}_{4}^{(1)}\partial_{t}N_{e}
-\frac{\epsilon^{2}H^{2}}{2}\int(\partial_{x_{1}}\frac{1}{n_{e}})\partial_{t}\partial_{x_{1}}^{2}N_{e}\partial_{tx_{1}}N_{e}
-\frac{\epsilon^{2}H^{2}}{4}\int(\partial_{x_{1}}^{2}\frac{1}{n_{e}})\partial_{t}\partial_{x_{1}}^{2}N_{e}\partial_{t}N_{e}\nonumber\\
&-\frac{\epsilon^{2}H^{2}}{4}\int(\partial_{t}\frac{1}{n_{e}})\partial_{x_{1}}^{4}N_{e}\partial_{t}N_{e}
-\frac{\epsilon^{3}H^{2}}{2}\int(\partial_{x_{1}}\frac{1}{n_{e}})\partial_{t}\partial_{x_{1}x_{2}}N_{e}\partial_{tx_{2}}N_{e}\nonumber\\
&-\frac{\epsilon^{3}H^{2}}{2}\int(\partial_{x_{2}}\frac{1}{n_{e}})\partial_{t}\partial_{x_{1}x_{2}}N_{e}\partial_{tx_{1}}N_{e}
-\frac{\epsilon^{3}H^{2}}{2}\int(\partial_{x_{1}x_{2}}\frac{1}{n_{e}})\partial_{t}\partial_{x_{1}x_{2}}N_{e}\partial_{t}N_{e}\nonumber\\
&-\frac{\epsilon^{3}H^{2}}{2}\int(\partial_{t}\frac{1}{n_{e}})\partial_{x_{1}}^{2}\partial_{x_{2}}^{2}N_{e}\partial_{t}N_{e}
-\frac{\epsilon^{4}H^{2}}{2}\int(\partial_{x_{2}}\frac{1}{n_{e}})\partial_{t}\partial_{x_{2}}^{2}N_{e}\partial_{tx_{2}}N_{e}\nonumber\\
&-\frac{\epsilon^{4}H^{2}}{4}\int(\partial_{x_{2}}^{2}\frac{1}{n_{e}})\partial_{t}\partial_{x_{2}}^{2}N_{e}\partial_{t}N_{e}
-\frac{\epsilon^{4}H^{2}}{4}\int(\partial_{t}\frac{1}{n_{e}})\partial_{x_{2}}^{4}N_{e}\partial_{t}N_{e}\nonumber\\
&+\frac{\epsilon^{2}H^{2}}{4}\int\partial_{t}\big(\frac{C_{1}}{n_{e}^{2}}-\frac{C_{2}}{n_{e}^{3}}+\frac{C_{3}}{n_{e}^{4}}
+\frac{\epsilon\mathcal{R}_{4}^{(2)}+\epsilon\mathcal{R}_{4}^{(3)}}{n_{e}^{4}}\big)\partial_{t}N_{e}
+\int\partial_{t}N_{i}\partial_{t}N_{e}\nonumber\\
=&:\sum_{i=1}^{20}E_{i}.
\end{align}
\emph{Estimate of the LHS of \eqref{m1}.}
Since $\frac{1}{2}<n_{e}<\frac{3}{2}$ and $H$ is a fixed constant, there exists a fixed constant $C$ such that
the LHS of \eqref{m1} is equal or greater than $C(\|\partial_{t}N_{e}\|^{2}+\epsilon\|\partial_{tx_{1}}N_{e}\|^{2}+\epsilon^{2}\|\partial_{tx_{2}}N_{e}\|^{2}
+\epsilon^{2}\|\partial_{t}\partial_{x_{1}}^{2}N_{e}\|^{2}+\epsilon^{3}\|\partial_{t}\partial_{x_{1}x_{2}}N_{e}\|^{2}
+\epsilon^{2}\|\partial_{t}\partial_{x_{2}}^{2}N_{e}\|^{2})$. Next, we estimate the right hand side terms. For $E_{1}$, by applying H\"older's inequality and Cauchy inequality, we have
\begin{equation*}
\begin{split}
E_{1}\leq &C(1+\epsilon^{9}\|\partial_{x_{1}}N_{e}\|_{L^{\infty}}^{2})(\epsilon^{2}\|\partial_{tx_{1}}N_{e}\|^{2}+\epsilon\|\partial_{t}N_{e}\|^{2})\\
\leq &C(1+\epsilon^{9}\|\partial_{x_{1}}N_{e}\|_{H^{2}}^{2})(\epsilon^{2}\|\partial_{tx_{1}}N_{e}\|^{2}+\epsilon\|\partial_{t}N_{e}\|^{2})\\
\leq &C(\epsilon\tilde{C})(\epsilon^{2}\|\partial_{tx_{1}}N_{e}\|^{2}+\epsilon\|\partial_{t}N_{e}\|^{2})\\
\leq &C_{1}(\epsilon^{2}\|\partial_{tx_{1}}N_{e}\|^{2}+\epsilon\|\partial_{t}N_{e}\|^{2}),
\end{split}
\end{equation*}
where we have used \eqref{priori} and Sobolev embedding $H^2\hookrightarrow L^{\infty}$. Similarly,
\begin{equation*}
\begin{split}
E_{2\sim8}\leq C_{1}(\epsilon\|\partial_{t}N_{e}\|^{2}+\epsilon^{2}\|\partial_{tx_{1}}N_{e}\|^{2}+\epsilon^{3}\|\partial_{tx_{2}}N_{e}\|^{2})+C_{1}.
\end{split}
\end{equation*}
\emph{Estimate of $E_{9}$.}
By applying H\"older's inequality and Cauchy inequality, we have
\begin{equation*}
\begin{split}E_{9}\leq&(1+\epsilon^{6}\|\partial_{x_{1}}N_{e}\|_{L^{\infty}}^{2})(\epsilon^{2}\|\partial_{tx_{1}}N_{e}\|^{2}
+\epsilon^{3}\|\partial_{t}\partial_{x_{1}}^{2}N_{e}\|^{2})\\
\leq &C_{1}(\epsilon^{2}\|\partial_{tx_{1}}N_{e}\|^{2}
+\epsilon^{3}\|\partial_{t}\partial_{x_{1}}^{2}N_{e}\|^{2}),
\end{split}
\end{equation*}
where we have used \eqref{one order}.
Similar to \eqref{one order}, we note that
\begin{equation}
\begin{split}\label{one}
|\partial_{t}\frac{1}{n_{e}}|
\leq C(\epsilon|\partial_{t}\tilde{n_{e}}|+\epsilon^{5}|\partial_{t}N_{e}|).
\end{split}
\end{equation}
Thus, similarly by using \eqref{one order}, \eqref{two order}, \eqref{two orders} and \eqref{one}, we have
\begin{equation*}
\begin{split}
E_{10\sim19}\leq C_{1}(&\epsilon\|\partial_{t}N_{e}\|^{2}+\epsilon^{2}\|\partial_{tx_{1}}N_{e}\|^{2}+\epsilon^{3}\|\partial_{tx_{2}}N_{e}\|^{2}
+\epsilon^{3}\|\partial_{t}\partial_{x_{1}}^{2}N_{e}\|^{2}\\
&+\epsilon^{4}\|\partial_{t}\partial_{x_{1}x_{2}}N_{e}\|^{2}
+\epsilon^{5}\|\partial_{t}\partial_{x_{2}}^{2}N_{e}\|^{2})+C_{1}.
\end{split}
\end{equation*}
\emph{Estimate of $B_{25}$.} Applying Young inequality, we have
\begin{equation*}
\begin{split}
B_{25}=\int\partial_{t}N_{i}\partial_{t}N_{e}\leq\gamma\|\partial_{t}N_{e}\|^{2}+C_{\gamma}\|\partial_{t}N_{i}\|^{2},
\end{split}
\end{equation*}
where for arbitrary small $\gamma>0$. Hence, we have shown that there exists some $\epsilon_1>0$ such that for $0<\epsilon<\epsilon_1$, we have
\begin{equation}\label{lemma3.1}
\begin{split}
\|\partial_{t}N_{e}\|^{2}&+\epsilon\|\partial_{tx_{1}}N_{e}\|^{2} +\epsilon^{2}\|\partial_{tx_{2}}N_{e}\|^{2} +\epsilon^{2}\|\partial_{t}\partial_{x_{1}}^{2}N_{e}\|^{2}\\ &+\epsilon^{3}\|\partial_{t}\partial_{x_{1}x_{2}}N_{e}\|^{2} +\epsilon^{4}\|\partial_{t}\partial_{x_{2}}^{2}N_{e}\|^{2}\leq C_{\gamma_{1}}\|\partial_{t}N_{i}\|^{2}+C_{1}.
\end{split}
\end{equation}
Similarly, taking $\partial_{tx_{1}},\partial_{tx_{2}}, \partial_{t}\partial_{x_{1}}^{2}, \partial_{t}\partial_{x_{1}x_{2}},\partial_{t}\partial_{x_{2}}^{2}$ of \eqref{rem-3} and then taking inner product with $\epsilon\partial_{tx_{1}}N_{e}, \epsilon^{2}\partial_{tx_{2}}N_{e}, \epsilon^{2}\partial_{t}\partial_{x_{1}}^{2}N_{e}, \epsilon^{3}\partial_{t}\partial_{x_{1}x_{2}}, \epsilon^{4}\partial_{x_{2}}^{2}N_{e}$ respectively, we have
\begin{align}
\epsilon\|&\partial_{tx_{1}}N_{e}\|^{2} +\epsilon^{2}\|\partial_{t}\partial_{x_{1}}^{2}N_{e}\|^{2} +\epsilon^{3}\|\partial_{t}\partial_{x_{1}x_{2}}N_{e}\|^{2} +\epsilon^{3}\|\partial_{t}\partial_{x_{1}}^{3}N_{e}\|^{2} +\epsilon^{4}\|\partial_{t}\partial_{x_{1}}^{2} \partial_{x_{2}}N_{e}\|^{2}\nonumber\\ &+\epsilon^{5}\|\partial_{t}\partial_{x_{1}} \partial_{x_{2}}^{2}N_{e}\|^{2} \leq C_{\gamma_{2}} (\|\partial_{t}N_{i}\|^{2}+\epsilon\|\partial_{t}N_{e}\|^{2} +\epsilon^{3}\|\partial_{tx_{2}}N_{e}\|^{2} +\epsilon^{5}\|\partial_{t}\partial_{x_{2}}^{2}N_{e}\|^{2}),\label{lemma3.2}\\
\epsilon^{2}\|&\partial_{tx_{2}}N_{e}\|^{2} +\epsilon^{3}\|\partial_{t}\partial_{x_{1}x_{2}}N_{e}\|^{2} +\epsilon^{4}\|\partial_{t}\partial_{x_{2}}^{2}N_{e}\|^{2} +\epsilon^{4}\|\partial_{t}\partial_{x_{1}}^{2} \partial_{x_{2}}N_{e}\|^{2}\nonumber\\ &+\epsilon^{5}\|\partial_{t}\partial_{x_{1}}\partial_{x_{2}}^{2}N_{e}\|^{2} +\epsilon^{6}\|\partial_{t}\partial_{x_{2}}^{3}N_{e}\|^{2}\nonumber\\ \leq &C_{\gamma_{3}} (\|\partial_{t}N_{i}\|^{2}+\epsilon\|\partial_{t}N_{e}\|^{2} +\epsilon^{2}\|\partial_{tx_{1}}N_{e}\|^{2} +\epsilon^{3}\|\partial_{t}\partial_{x_{1}}^{2}N_{e}\|^{2}),\label{lemma3.3}\\
\epsilon^{2}\|&\partial_{t}\partial_{x_{1}}^{2}N_{e}\|^{2}
+\epsilon^{3}\|\partial_{t}\partial_{x_{1}}^{3}N_{e}\|^{2}
+\epsilon^{4}\|\partial_{t}\partial_{x_{1}}^{2}\partial_{x_{2}}N_{e}\|^{2}
+\epsilon^{4}\|\partial_{t}\partial_{x_{1}}^{4}N_{e}\|^{2}\nonumber\\
&+\epsilon^{5}\|\partial_{t}\partial_{x_{1}}^{3}\partial_{x_{2}}N_{e}\|^{2}
+\epsilon^{6}\|\partial_{t}\partial_{x_{1}}^{2}\partial_{x_{2}}^{2}N_{e}\|^{2}\nonumber\\
\leq &C_{\gamma_{4}} (\|\partial_{t}N_{i}\|^{2}+\epsilon\|\partial_{t}N_{e}\|^{2}+\epsilon^{2}\|\partial_{tx_{1}}N_{e}\|^{2}
+\epsilon^{3}\|\partial_{tx_{2}}N_{e}\|^{2}+\epsilon^{4}\|\partial_{t}\partial_{x_{1}x_{2}}N_{e}\|^{2}\nonumber\\
&+\epsilon^{5}\|\partial_{t}\partial_{x_{2}}^{2}N_{e}\|^{2}+\epsilon^{6}\|\partial_{t}\partial_{x_{1}}\partial_{x_{2}}^{2}N_{e}\|^{2}
+\epsilon^{7}\|\partial_{t}\partial_{x_{2}}^{3}N_{e}\|^{2}),\label{lemma3.4}\\
\epsilon^{3}\|&\partial_{t}\partial_{x_{1}x_{2}}N_{e}\|^{2}
+\epsilon^{4}\|\partial_{t}\partial_{x_{1}}^{2}\partial_{x_{2}}N_{e}\|^{2}
+\epsilon^{5}\|\partial_{t}\partial_{x_{1}}\partial_{x_{2}}^{2}N_{e}\|^{2}
+\epsilon^{5}\|\partial_{t}\partial_{x_{1}}^{3}\partial_{x_{2}}N_{e}\|^{2}\nonumber\\
&+\epsilon^{6}\|\partial_{t}\partial_{x_{1}}^{2}\partial_{x_{2}}^{2}N_{e}\|^{2}
+\epsilon^{7}\|\partial_{t}\partial_{x_{1}}\partial_{x_{2}}^{3}N_{e}\|^{2}\nonumber\\
\leq &C_{\gamma_{5}} (\|\partial_{t}N_{i}\|^{2}+\epsilon\|\partial_{t}N_{e}\|^{2}+\epsilon^{2}\|\partial_{tx_{1}}N_{e}\|^{2}
+\epsilon^{3}\|\partial_{tx_{2}}N_{e}\|^{2}+\epsilon^{3}\|\partial_{t}\partial_{x_{1}}^{2}N_{e}\|^{2}\nonumber\\
&+\epsilon^{4}\|\partial_{t}\partial_{x_{1}}^{3}N_{e}\|^{2}
+\epsilon^{5}\|\partial_{t}\partial_{x_{2}}^{2}N_{e}\|^{2}
+\epsilon^{7}\|\partial_{t}\partial_{x_{2}}^{3}N_{e}\|^{2}),\label{lemma3.5}\\
\epsilon^{4}\|&\partial_{t}\partial_{x_{2}}^{2}N_{e}\|^{2}
+\epsilon^{5}\|\partial_{t}\partial_{x_{1}}\partial_{x_{2}}^{2}N_{e}\|^{2}
+\epsilon^{6}\|\partial_{t}\partial_{x_{2}}^{3}N_{e}\|^{2}
+\epsilon^{7}\|\partial_{t}\partial_{x_{1}}^{2}\partial_{x_{2}}^{2}N_{e}\|^{2}\nonumber\\
&+\epsilon^{8}\|\partial_{t}\partial_{x_{1}}\partial_{x_{2}}^{3}N_{e}\|^{2}
+\epsilon^{6}\|\partial_{t}\partial_{x_{2}}^{4}N_{e}\|^{2}\nonumber\\
\leq &C_{\gamma_{6}} (\|\partial_{t}N_{i}\|^{2}
+\epsilon\|\partial_{t}N_{e}\|^{2}
+\epsilon^{2}\|\partial_{tx_{1}}N_{e}\|^{2}
+\epsilon^{3}\|\partial_{tx_{2}}N_{e}\|^{2}
+\epsilon^{3}\|\partial_{t}\partial_{x_{1}}^{2}N_{e}\|^{2}\nonumber\\
&+\epsilon^{4}\|\partial_{t}\partial_{x_{1}x_{2}}N_{e}\|^{2}
+\epsilon^{4}\|\partial_{t}\partial_{x_{1}}^{3}N_{e}\|^{2}
+\epsilon^{5}\|\partial_{t}\partial_{x_{1}}^{2}\partial_{x_{2}}N_{e}\|^{2}.\label{lemma3.6}
\end{align}
Putting \eqref{lemma3.1} to \eqref{lemma3.6} together, let $C=\max{C_{\gamma_{i}},1\leq i\leq6}$, we obtain
\begin{equation*}
\begin{split}
\sum_{0\leq\alpha+\beta\leq4}\epsilon^{\alpha+2\beta}\|\partial_{t}\partial_{x_{1}}^{\alpha}\partial_{x_{2}}^{\beta}N_{e}\|^{2}
\leq C\|\partial_{t}N_{i}\|^{2}+C_{1}.
\end{split}
\end{equation*}
For higher order inequalities, we differentiate \eqref{rem-3} with $\partial_{t}\partial_{x_{1}}^{\alpha}\partial_{x_{2}}^{\beta}$ for $\alpha+\beta=k+2$ and then take inner product with $\epsilon^{\alpha+2\beta}\partial_{t}\partial_{x_{1}}^{\alpha}\partial_{x_{2}}^{\beta}N_{e}$  separately.
Thus we have proven \eqref{equL3}.
\end{proof}

\subsection{Zeroth to third order estimates for $\mathbf{U}$}\label{2.2}
The zeroth, first, second and third order estimates can be summarized in the following
\begin{proposition}\label{P1}
Let $(N_{i},N_{e},\mathbf{U})$ be a solution to \eqref{rem} and $\alpha, \beta, k$ be integer for $k=0,1,2,3$, we have
\begin{align}\label{equP1}
\frac{1}2&{\frac{d}{dt}}\sum_{\alpha+\beta=k}\epsilon^{\alpha+2\beta}\big(\|\partial_{x_{1}}^{\alpha}\partial_{x_{2}}^{\beta}U_{1}\|^{2}
+\|\partial_{x_{1}}^{\alpha}\partial_{x_{2}}^{\beta}U_{2}\|^{2}\big)\nonumber\\
&+\frac{1}2{\frac{d}{dt}}\int\frac{n_{e}}{n_{i}}\sum_{\alpha+\beta=k}\epsilon^{\alpha+2\beta}
(\partial_{x_{1}}^{\alpha}\partial_{x_{2}}^{\beta}N_{e})^{2}
+\frac{1}2{\frac{d}{dt}}\int(\frac{n_{e}^{2}}{n_{i}}+\frac{H^{2}}{4}\frac{1}{n_{e}n_{i}})
\sum_{\alpha+\beta=k+1}\epsilon^{\alpha+2\beta}(\partial_{x_{1}}^{\alpha}\partial_{x_{2}}^{\beta}N_{e})^{2}\nonumber\\
&+\frac{1}2\frac{H^{2}}{4}{\frac{d}{dt}}\int\frac{1}{n_{i}}\sum_{\alpha+\beta=k+2}\epsilon^{\alpha+2\beta}
(\partial_{x_{1}}^{\alpha}\partial_{x_{2}}^{\beta}N_{e})^{2}
+\frac{1}2\frac{H^{4}}{16}{\frac{d}{dt}}\int\frac{1}{n_{e}^{2}n_{i}}
\sum_{\alpha+\beta=k+3}\epsilon^{\alpha+2\beta}(\partial_{x_{1}}^{\alpha}\partial_{x_{2}}^{\beta}N_{e})^{2}\nonumber\\
\leq &C(1+\epsilon^{2}|\!|\!|(N_{e},\mathbf{U})|\!|\!|_{\epsilon}^{6})(1+|\!|\!|(N_{e},\mathbf{U})|\!|\!|_{\epsilon}^{2}).
\end{align}
\end{proposition}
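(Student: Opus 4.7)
The plan is to establish \eqref{equP1} for all four values $k\in\{0,1,2,3\}$ in a unified way. For each multi-index $(\alpha,\beta)$ with $\alpha+\beta=k$, I would apply $\partial_{x_1}^\alpha\partial_{x_2}^\beta$ to the momentum equations \eqref{rem-2} and \eqref{rem-3}, take the $L^2$ inner product with $\epsilon^{\alpha+2\beta}\partial_{x_1}^\alpha\partial_{x_2}^\beta U_1$ and $\epsilon^{\alpha+2\beta}\partial_{x_1}^\alpha\partial_{x_2}^\beta U_2$ respectively, and sum over $\alpha+\beta=k$. The $\partial_t$ terms directly produce the first line of the LHS of \eqref{equP1}. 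The singular transport terms $-(V-u_{i_1})\partial_{x_1}/\epsilon$ and $\epsilon^{-1/2}u_{i_2}\partial_{x_2}$ are handled by integration by parts against $U_j$; since $V-u_{i_1}=V-\epsilon\tilde u_{i_1}-\epsilon^5U_1$ and $u_{i_2}=\epsilon^{1/2}\tilde u_{i_2}+\epsilon^5U_2$, the $\epsilon^{-1}$ and $\epsilon^{-1/2}$ prefactors cancel, leaving contributions that will be absorbed into the RHS of \eqref{equP1} via \eqref{priori} and the Sobolev embedding $H^2\hookrightarrow L^\infty$.

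The heart of the proof is the generation of the $N_e$-quadratic energies on the LHS of \eqref{equP1}. Each source term on the RHS of \eqref{rem-2}, namely the pressure-type contribution $-n_e\partial_{x_1}N_e/\epsilon$ and the Bohm pieces $\tfrac{H^2}{4}(\partial_{x_1}^3N_e+\epsilon\partial_{x_1}\partial_{x_2}^2N_e)/n_e$, is integrated by parts in $x_1$ onto $U_1$, producing contributions proportional to $\partial_{x_1}U_1/\epsilon$. Equation \eqref{rem-1} then substitutes $\partial_{x_1}U_1/\epsilon=-n_i^{-1}\partial_tN_i+\text{l.o.t.}$, and the Poisson--Bohm relation \eqref{rem-4}, rewritten schematically as
\begin{equation*}
N_i=N_e-\epsilon n_e\partial_{x_1}^2N_e-\epsilon^2n_e\partial_{x_2}^2N_e+\frac{\epsilon^2H^2}{4n_e}\bigl(\partial_{x_1}^4N_e+2\epsilon\partial_{x_1}^2\partial_{x_2}^2N_e+\epsilon^2\partial_{x_2}^4N_e\bigr)+\text{l.o.t.},
\end{equation*}
converts $\partial_tN_i$ into $\partial_tN_e$ plus higher-order time--space derivatives of $N_e$. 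Integrating those once more by parts produces total time derivatives of $N_e$-quadratic forms: the leading $N_e$ in $N_i$ yields $\tfrac12\tfrac{d}{dt}\int(n_e/n_i)(\partial_{x_1}^\alpha\partial_{x_2}^\beta N_e)^2$ at order $k$; the $\epsilon n_e\partial_{x_1}^2$ and $\epsilon^2n_e\partial_{x_2}^2$ corrections in $N_i$, together with the $\partial_{x_1}^3N_e$ and $\epsilon\partial_{x_1}\partial_{x_2}^2N_e$ Bohm sources in \eqref{rem-2}, generate the $(n_e^2/n_i+H^2/(4n_en_i))$-weighted energy at order $k+1$; mixed Bohm--Laplacian cross contributions deliver the $H^2/(4n_i)$ energy at order $k+2$; and the pure Bohm-squared cross term produces the $H^4/(16n_e^2n_i)$ energy at order $k+3$, where the anisotropic $\epsilon^{\alpha+2\beta}$ scaling telescopes. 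The symmetric computation applied to \eqref{rem-3}, using the $\epsilon^{1/2}n_i^{-1}\partial_{x_2}U_2$ piece of \eqref{rem-1}, handles the $x_2$-side in the same manner.

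All remaining contributions will be placed on the RHS of \eqref{equP1}. They split into three families: (i) commutator remainders $[\partial_{x_1}^\alpha\partial_{x_2}^\beta,f]g$ arising from moving derivatives past the smooth factors $n_e,n_i,1/n_e,\tilde n_e,\tilde u_{i_1},\ldots$, estimated by the commutator inequality of Lemma \ref{Lem2} combined with $H^2\hookrightarrow L^\infty$ and the a priori bound \eqref{priori}; (ii) time derivatives falling on the weights $n_e/n_i$, $n_e^2/n_i+H^2/(4n_en_i)$, $1/n_i$ and $1/(n_e^2n_i)$, controlled by expressing $\partial_tN_i$ and $\partial_tN_e$ via \eqref{rem-1} and Lemma \ref{L3}; (iii) the explicit smooth remainders $\mathcal R_i^{(j)}$ and the aggregates $A_i,B_i,C_j$ from the appendix, bounded uniformly via Lemma \ref{Lem1}. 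The factor $1+\epsilon^2|\!|\!|(N_e,\mathbf U)|\!|\!|_\epsilon^6$ on the RHS arises from products such as $(\epsilon^5N_e)^2\cdot(\epsilon^{5/2}\partial N_e)$ combined with $\|N_e\|_{L^\infty}\lesssim|\!|\!|(N_e,\mathbf U)|\!|\!|_\epsilon$, so that every power of $N_e$ beyond the quadratic energy carries a compensating positive power of $\epsilon$.

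The main obstacle I anticipate is orchestrating the precise cancellation of the $\epsilon^{-1}$ and $\epsilon^{-1/2}$ singularities through the chain \eqref{rem-2}/\eqref{rem-3} $\to$ \eqref{rem-1} $\to$ \eqref{rem-4}, while simultaneously matching the remaining $\epsilon$-powers to the anisotropic weights $\epsilon^{\alpha+2\beta}$ of the triple norm \eqref{|||}. This bookkeeping becomes particularly delicate for the top-order Bohm-squared contribution: two nested integrations by parts of $\partial_{x_1}^4N_e$ or $\partial_{x_2}^4N_e$ against $1/n_e$ generate commutator terms proportional to $\partial(1/n_e)\sim\epsilon\partial\tilde n_e+\epsilon^5\partial N_e$, and one must verify that the $\epsilon^5\partial N_e$ piece never exceeds the $\epsilon^{\alpha+2\beta}$ budget afforded by the norm. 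Pushing this alignment through uniformly in every $(\alpha,\beta)$ with $\alpha+\beta=k$ and every $k\in\{0,1,2,3\}$ is the technical core of the proposition.
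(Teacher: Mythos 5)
Your proposal follows essentially the same route as the paper, which omits the proof of Proposition \ref{P1} and refers to the detailed argument for Proposition \ref{P2} (Lemmas \ref{P3}--\ref{P7}): differentiate \eqref{rem-2}--\eqref{rem-3}, test with the weighted derivatives of $\mathbf{U}$, convert the divergence of $\mathbf{U}$ via \eqref{rem-1} into $\partial_t N_i$, substitute $N_i$ from \eqref{rem-4} to produce exactly the weighted time-derivative energies $n_e/n_i$, $n_e^2/n_i+H^2/(4n_en_i)$, $H^2/(4n_i)$, $H^4/(16n_e^2n_i)$ at orders $k,\dots,k+3$, and control the rest by Lemmas \ref{Lem1}, \ref{Lem2}, \ref{L2}, \ref{L3} and the a priori bound \eqref{priori}. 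The only slip is the passing claim $\|N_e\|_{L^\infty}\lesssim|\!|\!|(N_e,\mathbf U)|\!|\!|_{\epsilon}$ (the anisotropic weights give $\|N_e\|_{H^2}\lesssim\epsilon^{-2}|\!|\!|\cdot|\!|\!|_{\epsilon}$), but your surrounding bookkeeping of compensating $\epsilon$-powers is consistent with the paper's estimates, so this does not affect the argument.
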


The proof of this proposition will be omitted for simplicity, which can be proved by `repeating' the proof of Proposition \ref{P2} below. Indeed, the proof here is slightly easier than that of Proposition \ref{P2}, since the norm already consists of higher order norms such as $\dot H^7$ of $N_e$ and hence the nonlinear terms can be controlled by Sobolev embeddings and other techniques. But note that \eqref{equP1} is not closed, 
therefore we need the higher order estimates in Proposition \ref{P2}, from which we obtain a closed inequality \eqref{Gron} by adding \eqref{equP1} to \eqref{equP2}.

\subsection{Fourth order estimates for $\mathbf{U}$}\label{2.3}
\begin{proposition}\label{P2}
 Let $(N_{i},N_{e},\mathbf{U})$ be a solution to \eqref{rem}, then
 \begin{equation}\label{equP2}
\begin{split}
\frac{1}2&{\frac{d}{dt}}\sum_{\alpha+\beta=4}\epsilon^{\alpha+2\beta}(\|\partial_{x_{1}}^{\alpha}\partial_{x_{2}}^{\beta}U_{1}\|^{2}
+\|\partial_{x_{1}}^{\alpha}\partial_{x_{2}}^{\beta}U_{2}\|^{2})\\
 &+\frac{1}2{\frac{d}{dt}}\int\frac{n_{e}}{n_{i}}\sum_{\alpha+\beta=4}\epsilon^{\alpha+2\beta}(\partial_{x_{1}}^{\alpha}\partial_{x_{2}}^{\beta}N_{e})^{2}
 +\frac{1}2{\frac{d}{dt}}\int(\frac{n_{e}^{2}}{n_{i}}+\frac{H^{2}}{4}\frac{1}{n_{e}n_{i}})\sum_{\alpha+\beta=5}\epsilon^{\alpha+2\beta}(\partial_{x_{1}}^{\alpha}\partial_{x_{2}}^{\beta}N_{e})^{2}\\
 &+\frac{1}2\frac{H^{2}}{4}{\frac{d}{dt}}\int\frac{1}{n_{i}}\sum_{\alpha+\beta=6}\epsilon^{\alpha+2\beta}(\partial_{x_{1}}^{\alpha}\partial_{x_{2}}^{\beta}N_{e})^{2}
 +\frac{1}2\frac{H^{4}}{16}{\frac{d}{dt}}\int\frac{1}{n_{e}^{2}n_{i}}\sum_{\alpha+\beta=7}\epsilon^{\alpha+2\beta}(\partial_{x_{1}}^{\alpha}\partial_{x_{2}}^{\beta}N_{e})^{2}\\
 \leq &C(1+\epsilon^{2}|\!|\!|(N_{e},\mathbf{U})|\!|\!|_{\epsilon}^{6})(1+|\!|\!|(N_{e},\mathbf{U})|\!|\!|_{\epsilon}^{2}).
 \end{split}
\end{equation}
\end{proposition}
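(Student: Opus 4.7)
The plan is to perform an energy estimate at the top order (fourth) by applying $\partial_{x_{1}}^{\alpha}\partial_{x_{2}}^{\beta}$ with $\alpha+\beta=4$ to the momentum equations \eqref{rem-2} and \eqref{rem-3} and taking the $L^{2}$ inner product with $\epsilon^{\alpha+2\beta}\partial_{x_{1}}^{\alpha}\partial_{x_{2}}^{\beta}U_{1}$ and $\epsilon^{\alpha+2\beta}\partial_{x_{1}}^{\alpha}\partial_{x_{2}}^{\beta}U_{2}$ respectively, then summing over $\alpha+\beta=4$. The transport part $\partial_{t}U_{j}-\frac{V-u_{i_{1}}}{\epsilon}\partial_{x_{1}}U_{j}+\epsilon^{-1/2}u_{i_{2}}\partial_{x_{2}}U_{j}$ produces a time derivative of $\tfrac{1}{2}\|\partial_{x_{1}}^{\alpha}\partial_{x_{2}}^{\beta}U_{j}\|^{2}$ up to commutators and divergence terms $\partial_{x_{1}}\!\left(\frac{V-u_{i_{1}}}{\epsilon}\right)+\partial_{x_{2}}\!\left(\frac{u_{i_{2}}}{\epsilon^{1/2}}\right)$, which are $O(1)$ by the ansatz of $u_{i_{1}},u_{i_{2}}$ in \eqref{expan-formal}; these are therefore bounded by $C\|\partial_{x_{1}}^{\alpha}\partial_{x_{2}}^{\beta}U_{j}\|^{2}$.

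The singular terms $-\frac{n_{e}}{\epsilon}\partial_{x_{1}}N_{e}$ in \eqref{rem-2} and $-\frac{\epsilon^{1/2}n_{e}}{\epsilon}\partial_{x_{2}}N_{e}$ in \eqref{rem-3} are the delicate ones. After integration by parts and using the Poisson equation \eqref{rem-4} to replace $\frac{N_{e}-N_{i}}{\epsilon^{2}}$ by the elliptic operator acting on $N_{e}$ plus lower order, these terms recombine with the corresponding dispersive Bohm contributions from $\frac{H^{2}}{4}\partial_{x_{1}}^{3}N_{e}/n_{e}$, $\frac{H^{2}}{4}\epsilon\partial_{x_{1}}\partial_{x_{2}}^{2}N_{e}/n_{e}$ (and the $x_{2}$ analogues) to produce exactly the five weighted $\frac{d}{dt}$-terms appearing in \eqref{equP2}. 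More precisely, the factor $\frac{n_{e}}{n_{i}}$ arises from closing $U_{1}\partial_{x_{1}}N_{e}+\epsilon^{1/2}U_{2}\partial_{x_{2}}N_{e}$ against \eqref{rem-1}, the $\frac{n_{e}^{2}}{n_{i}}+\frac{H^{2}}{4 n_{e}n_{i}}$ factor comes from the first derivative of $N_{e}$ interacting with both the pressure $n_{e}\partial_{x_{j}}N_{e}$ and the third-order Bohm term, while the $\frac{1}{n_{i}}$ and $\frac{1}{n_{e}^{2}n_{i}}$ factors track the second and third derivatives generated by iterated use of \eqref{rem-4}. I will carry out this symmetrization following the same pattern used in Lemma \ref{L1}, but now tracking the anisotropic weights $\epsilon^{\alpha+2\beta}$ so that the two powers of $\epsilon^{1/2}$ in the $x_{2}$ direction match the weight in the triple norm.

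The remaining inhomogeneous terms split into three classes. First, the quasilinear products $n_{i}^{-1}\partial_{x_{1}}U_{1}$, $\partial_{x_{1}}\tilde{u}_{i_{1}}N_{i}$, etc., are handled by the commutator estimate \eqref{e16} of Lemma \ref{Lem2}, which allows one to move four derivatives past the coefficient at the cost of $\|\nabla\text{coeff}\|_{L^{\infty}}\|\nabla^{3}\cdot\|_{L^{2}}+\|\nabla^{4}\text{coeff}\|_{L^{2}}\|\cdot\|_{L^{\infty}}$; the priori bound \eqref{priori}, the $H^{s}$-bound \eqref{equ3'} on the profiles, and Sobolev embedding $H^{2}\hookrightarrow L^{\infty}$ in two dimensions yield the factor $C(1+|\!|\!|(N_{e},\mathbf{U})|\!|\!|_{\epsilon}^{2})$. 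Second, the pure power terms $\frac{A_{i}}{n_{e}^{k}}$, $\frac{B_{i}}{n_{e}^{k}}$, $\frac{C_{i}}{n_{e}^{k}}$ from the Bohm potential are polynomial in $\nabla N_{e}$ and can be estimated exactly as $D_{14}$ in the proof of Lemma \ref{L1}, producing the cubic factor $\epsilon^{2}|\!|\!|(N_{e},\mathbf{U})|\!|\!|_{\epsilon}^{6}$; the saving powers of $\epsilon$ come from the fact that $N_{e}$ always enters through $\epsilon^{5}N_{e}$ in the expansion of $n_{e}$. Third, the remainder-profile interactions $\epsilon\mathcal{R}_{i}^{(j)}$ are bounded in $L^{2}$ by a constant depending on \eqref{equ3'} and the $C_{1}(\epsilon\tilde C)$ bound for $\mathcal{R}_{i}^{(3)}$, giving the additive $C$ on the right of \eqref{equP2}.

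The principal obstacle is the term $N_{i}$, which enters through the Poisson inversion but is not itself controlled in the triple norm at the level $H^{7}$ needed for the fourth-order $U$-estimate. The idea is that the four derivatives falling on the pressure $n_{e}^{-1}\partial_{x_{1}}N_{e}$ generate a contribution $\int (n_{e}/n_{i})\partial_{x_{1}}^{4}\partial_{x_{2}}^{\beta}N_{e}\cdot\partial_{x_{1}}^{5}\partial_{x_{2}}^{\beta}N_{e}\,\epsilon^{\alpha+2\beta}$, which is integrated by parts to a $\frac{d}{dt}$ term modulo $\partial_{t}(n_{e}/n_{i})$; the latter requires control of $\partial_{t}N_{e}$ and $\partial_{t}N_{i}$, and this is exactly why Lemmas \ref{L2} and \ref{L3} were prepared. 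Combining them one absorbs the time-derivative remainder into the energy plus an $O(1)$ error. Once every term has been placed into either the five $\frac{d}{dt}$-terms on the left of \eqref{equP2} or the RHS expression $C(1+\epsilon^{2}|\!|\!|(N_{e},\mathbf{U})|\!|\!|_{\epsilon}^{6})(1+|\!|\!|(N_{e},\mathbf{U})|\!|\!|_{\epsilon}^{2})$, the proposition follows. The estimate is not closed by itself because it misses the lower $k\leq3$ levels of the triple norm; that is why it is used in conjunction with Proposition \ref{P1} to obtain a closed Gr\"onwall inequality as remarked after \eqref{equP1}.
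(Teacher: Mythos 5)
Your plan follows essentially the same route as the paper: apply the fourth-order derivatives to \eqref{rem-2}--\eqref{rem-3}, test with the weighted derivatives of $\mathbf{U}$, close the singular pressure/Bohm terms by substituting the continuity equation \eqref{rem-1} for $\partial_{x_1}^{\alpha}\partial_{x_2}^{\beta}(\partial_{x_1}U_1+\epsilon^{1/2}\partial_{x_2}U_2)$ and then the Poisson equation \eqref{rem-4} for $N_i$, so that the weights $\tfrac{n_e}{n_i}$, $\tfrac{n_e^2}{n_i}+\tfrac{H^2}{4n_en_i}$, $\tfrac{1}{n_i}$, $\tfrac{1}{n_e^2n_i}$ emerge as exact time derivatives, with Lemmas \ref{L2}--\ref{L3} absorbing the $\partial_t(n_e/n_i)$-type errors and the commutator estimate handling the quasilinear terms. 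Apart from minor slips of notation (the pressure term is $\tfrac{n_e}{\epsilon}\partial_{x_1}N_e$, not $n_e^{-1}\partial_{x_1}N_e$, and the displayed integrand producing the $\tfrac{d}{dt}$ term should carry a $\partial_t$ on one factor), this is the argument of Lemmas \ref{P3}--\ref{P7}.
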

\begin{proof}
The proof consists of the results of the following Lemmas \ref{P3}-\ref{P7} which are all about the estimates of the fourth order derivatives for $U$. In this subsection, we only prove Lemma \ref{P3} and leave the others to the next subsection.
\end{proof}
\begin{lemma}\label{P3}
 Let $(N_{i},N_{e},\mathbf{U})$ be a solution to \eqref{rem}. Then
\begin{equation}\label{equLem1}
\begin{split}
\frac{\epsilon^{4}}{2}\frac{d}{dt}&(\|\partial_{x_{1}}^{4}U_{1}\|^{2}+\|\partial_{x_{1}}^{4}U_{2}\|^{2})
+\frac{\epsilon^{4}}{2}\frac{d}{dt}\int\frac{n_{e}}{n_{i}}(\partial_{x_{1}}^{4}N_{e})^{2}\\
&+\frac{\epsilon^{5}}{2}\frac{d}{dt}\int(\frac{n_{e}^{2}}{n_{i}}+\frac{{H^{2}}}{4}\frac{1}{n_{e}n_{i}})
\left((\partial_{x_{1}}^{5}N_{e})^{2}+\epsilon(\partial_{x_{1}}^{4}\partial_{x_{2}}N_{e})^{2}\right)\\
&+\frac{\epsilon^{6}}{2}\frac{H^{2}}{4}\int\frac{1}{n_{i}}\left((2\partial_{x_{1}}^{6}N_{e})^{2}+3\epsilon(\partial_{x_{1}}^{5}\partial_{x_{2}}N_{e})^{2}
+2\epsilon^{2}(\partial_{x_{1}}^{4}\partial_{x_{2}}^{2}N_{e})^{2}\right)\\
&+\frac{\epsilon^{7}}{2}\frac{H^{4}}{16}\int\frac{1}{n_{i}}\left((\partial_{x_{1}}^{7}N_{e})^{2}+3\epsilon(\partial_{x_{1}}^{6}\partial_{x_{2}}N_{e})^{2}
+3\epsilon^{2}(\partial_{x_{1}}^{5}\partial_{x_{2}}^{2}N_{e})^{2}+\epsilon^{3}(\partial_{x_{1}}^{4}\partial_{x_{2}}^{3}N_{e})^{2}\right)\\
\leq & C_{1}(1+\epsilon^{2}|\!|\!|(N_{e},\mathbf{U})|\!|\!|_{\epsilon}^{6})(1+|\!|\!|(N_{e},\mathbf{U})|\!|\!|_{\epsilon}^{2}).
\end{split}
\end{equation}
\end{lemma}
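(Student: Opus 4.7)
The plan is to apply $\partial_{x_{1}}^{4}$ to the two momentum equations \eqref{rem-2} and \eqref{rem-3}, test them respectively with $\epsilon^{4}\partial_{x_{1}}^{4}U_{1}$ and $\epsilon^{4}\partial_{x_{1}}^{4}U_{2}$, and sum. The time-derivative terms immediately produce $\tfrac{\epsilon^{4}}{2}\tfrac{d}{dt}(\|\partial_{x_{1}}^{4}U_{1}\|^{2}+\|\partial_{x_{1}}^{4}U_{2}\|^{2})$. The remaining terms split into three groups: transport-convection terms, pure $U$-quadratic/residual terms, and the coupling with $N_{e}$ (pressure-like plus Bohm-potential contributions). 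The transport term $-(V-u_{i_{1}})/\epsilon\cdot\partial_{x_{1}}^{5}U_{j}$, after integration by parts, leaves $\tfrac{\epsilon^{3}}{2}\int\partial_{x_{1}}(V-u_{i_{1}})(\partial_{x_{1}}^{4}U_{j})^{2}$; since $u_{i_{1}}-V=O(\epsilon)$ by \eqref{cut}, its $x_{1}$-derivative is $O(\epsilon)$ and this term is controlled uniformly using \eqref{priori} and $H^{2}\hookrightarrow L^{\infty}$. The residual $\mathcal R_{2}^{(1)},\mathcal R_{3}^{(1)}$ and lower-order $U$-quadratic terms are bounded similarly via H\"older, the commutator estimate \eqref{e16}, and Lemma \ref{Lem1}.

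The heart of the argument is the treatment of the coupling contributions from the RHS of \eqref{rem-2} and \eqref{rem-3}. After applying $\partial_{x_{1}}^{4}$ and pairing with $\epsilon^{4}\partial_{x_{1}}^{4}U_{j}$, the linear pressure terms yield $-\epsilon^{3}\int n_{e}\,\partial_{x_{1}}^{5}N_{e}\,\partial_{x_{1}}^{4}U_{1}$ and $-\epsilon^{3}\,\epsilon^{1/2}\int n_{e}\,\partial_{x_{1}}^{4}\partial_{x_{2}}N_{e}\,\partial_{x_{1}}^{4}U_{2}$, while the Bohm terms generate $\epsilon^{4}\tfrac{H^{2}}{4}\int n_{e}^{-1}\bigl(\partial_{x_{1}}^{7}N_{e}+\epsilon\partial_{x_{1}}^{5}\partial_{x_{2}}^{2}N_{e}\bigr)\partial_{x_{1}}^{4}U_{1}$ and their $\partial_{x_{2}}$-analogs. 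The trick, as in \cite{LP} but now in the anisotropic setting, is to use the continuity equation \eqref{rem-1} to replace $\partial_{x_{1}}U_{1}+\epsilon^{1/2}\partial_{x_{2}}U_{2}$ by $-(\epsilon/n_{i})\partial_{t}N_{i}$ modulo transport and lower-order terms, and then the Poisson-Bohm equation \eqref{rem-4} to convert $\partial_{t}N_{i}$ into a linear combination of $\partial_{t}\partial_{x_{1}}^{\alpha}\partial_{x_{2}}^{\beta}N_{e}$ for $\alpha+\beta\le 4$. After integration by parts in $x_{1}$ (and in $x_{2}$ for the mixed piece), the coupling terms reassemble into time derivatives of the weighted quadratic functionals of $N_{e}$ appearing on the LHS: $\tfrac{1}{2}\tfrac{d}{dt}\int\frac{n_{e}}{n_{i}}(\partial_{x_{1}}^{4}N_{e})^{2}$ from the pressure, $\tfrac{1}{2}\tfrac{d}{dt}\int\bigl(\tfrac{n_{e}^{2}}{n_{i}}+\tfrac{H^{2}}{4 n_{e} n_{i}}\bigr)\bigl((\partial_{x_{1}}^{5}N_{e})^{2}+\epsilon(\partial_{x_{1}}^{4}\partial_{x_{2}}N_{e})^{2}\bigr)$ from the cross term between pressure and Bohm, and the two higher-order weighted terms with coefficients $\tfrac{H^{2}}{4n_{i}}$ and $\tfrac{H^{4}}{16 n_{e}^{2} n_{i}}$ from the pure Bohm contribution. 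The combinatorial factors $(2,3,2)$ and $(1,3,3,1)$ on the sixth- and seventh-order terms come precisely from the binomial expansion of $(\partial_{x_{1}}^{2}+\epsilon\partial_{x_{2}}^{2})^{k}$ acting on derivatives of $N_{e}$.

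The commutator remainders generated by $[\partial_{x_{1}}^{4},n_{e}]$, $[\partial_{x_{1}}^{4},1/n_{e}]$, and by moving $\partial_{t}$ across the coefficients $n_{e}/n_{i}$, $1/(n_{e}n_{i})$, $1/(n_{e}^{2}n_{i})$ are estimated using Lemma \ref{Lem2}, the pointwise bounds \eqref{one order}-\eqref{one}, Sobolev embedding, and the priori assumption \eqref{priori}. The auxiliary Lemmas \ref{L1}, \ref{L2}, \ref{L3} absorb every appearance of $N_{i}$ and of $\partial_{t}N_{e}$ back into the triple norm of $(N_{e},\mathbf U)$. A Young inequality with a small parameter is applied to any cross term that threatens to generate a top-order derivative not already on the LHS, the smallness being paid for by the explicit $\epsilon^{k}$ weights in \eqref{|||}. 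The main technical obstacle is the $\epsilon$-bookkeeping: the singular prefactors $1/\epsilon$ on the RHS of \eqref{rem-2}-\eqref{rem-4} must conspire with the anisotropic weights $\epsilon^{\alpha+2\beta}$ of \eqref{|||} so that every nonlinear residual is of the form $\epsilon^{2}$ (or better) times a polynomial in $|\!|\!|(N_{e},\mathbf U)|\!|\!|_{\epsilon}$ plus a linear term in $|\!|\!|(N_{e},\mathbf U)|\!|\!|_{\epsilon}^{2}$; this is what ultimately forces the precise structure $C_{1}(1+\epsilon^{2}|\!|\!|(N_{e},\mathbf U)|\!|\!|_{\epsilon}^{6})(1+|\!|\!|(N_{e},\mathbf U)|\!|\!|_{\epsilon}^{2})$ on the RHS of \eqref{equLem1} and is the step where the asymmetry between $x_{1}$ and $x_{2}$ (inherited from \eqref{trans}) needs to be respected at every integration by parts.
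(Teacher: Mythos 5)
Your proposal is correct and follows essentially the same route as the paper: apply $\partial_{x_{1}}^{4}$ to \eqref{rem-2}--\eqref{rem-3}, test with $\epsilon^{4}\partial_{x_{1}}^{4}U_{1},\epsilon^{4}\partial_{x_{1}}^{4}U_{2}$, isolate the combination $\partial_{x_{1}}^{4}U_{1}+\epsilon^{1/2}\partial_{x_{1}}^{3}\partial_{x_{2}}U_{2}$ in the coupling term, replace it via the differentiated continuity equation \eqref{rem-1} by $-\epsilon\partial_{t}\partial_{x_{1}}^{3}N_{i}/n_{i}$ plus lower-order terms, convert $N_{i}$ to $N_{e}$ through \eqref{rem-4}, and reassemble the weighted time-derivative functionals, with Lemmas \ref{L1}--\ref{L3}, the commutator estimate and the a priori bound \eqref{priori} absorbing all remainders. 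This is exactly the paper's Steps 1--3 (the $F_i$, $G_i$, $H_i$, $K_i$ decompositions), so no substantive difference to report.
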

\begin{proof}[Proof of Lemma \ref{P3}.] The proof of Lemma \ref{P3} is divided into three steps. For simplicity, the estimates of some crucial terms which appear in step 1 are postponed to step 2 and step 3.

Step 1.
We take $\partial_{x_{1}}^{4}$ of \eqref{rem-2} and \eqref{rem-3} respectively, then take inner product of
$\epsilon^{4}\partial_{x_{1}}^{4}U_{1}, \ \epsilon^{4}\partial_{x_{1}}^{4}U_{2}$ and sum the results. By integration by parts and using commutator notation \eqref{commu}, we obtain
\begin{align}
\frac{\epsilon^{4}}{2}&\frac{d}{dt}\big(\|\partial_{x_{1}}^{4}U_{1}\|^{2} +\|\partial_{x_{1}}^{4}U_{2}\|^{2}\big)\nonumber\\
=&-\int\big(\epsilon^{3}n_{e}\partial_{x_{1}}^{5}N_{e} -\frac{\epsilon^{4}H^{2}}{4}\frac{\partial_{x_{1}}^{7}N_{e}
+\epsilon\partial_{x_{1}}^{5}\partial_{x_{2}}^{2}N_{e}}{n_{e}}\big)
\big(\partial_{x_{1}}^{4}U_{1}+\epsilon^{1/2}\partial_{x_{1}}^{3} \partial_{x_{2}}U_{2}\big)\nonumber\\
&+\epsilon^{3}\int\left(\partial_{x_{1}}^{4}((V-u_{i1})\partial_{x_{1}}U_{1})
-\epsilon^{1/2}\partial_{x_{1}}^{4}(u_{i2}\partial_{x_{2}}U_{1})\right) \partial_{x_{1}}^{4}U_{1} -\epsilon^{3}\int[\partial_{x_{1}}^{4},n_{e}] \partial_{x_{1}}N_{e}\partial_{x_{1}}^{4}U_{1}\nonumber\\
&+\epsilon^{3}\int\left(\partial_{x_{1}}^{4}((V-u_{i1})\partial_{x_{1}}U_{2})
-\epsilon^{1/2}\partial_{x_{1}}^{4}(u_{i2}\partial_{x_{2}}U_{2})\right) \partial_{x_{1}}^{4}U_{2}
-\epsilon^{3}\epsilon^{1/2}\int\partial_{x_{1}}n_{e} \partial_{x_{1}}^{4}N_{e}\partial_{x_{1}}^{3}\partial_{x_{2}}U_{2}\nonumber\\
&+\epsilon^{3}\epsilon^{1/2}\int\partial_{x_{2}}n_{e} \partial_{x_{1}}^{4}N_{e}\partial_{x_{1}}^{4}U_{2}
+\frac{\epsilon^{4}H^{2}}{4}\int[\partial_{x_{1}}^{4}, \frac{1}{n_{e}}](\partial_{x_{1}}^{3}N_{e}
+\epsilon\partial_{x_{1}}\partial_{x_{2}}^{2}N_{e}) \partial_{x_{1}}^{4}U_{1}\nonumber\\
&+\frac{\epsilon^{4}\epsilon^{1/2}H^{2}}{4} \int[\partial_{x_{1}}^{4},\frac{1}{n_{e}}] (\partial_{x_{1}}^{2}\partial_{x_{2}}N_{e} +\epsilon\partial_{x_{2}}^{3}N_{e})\partial_{x_{1}}^{4}U_{2} -\epsilon^{3}\epsilon^{1/2}\int[\partial_{x_{1}}^{4},n_{e}] \partial_{x_{2}}N_{e}\partial_{x_{1}}^{4}U_{2}\nonumber\\
&+\frac{\epsilon^{4}\epsilon^{1/2}H^{2}}{4}\int\partial_{x_{2}} \frac{1}{n_{e}}\big(\partial_{x_{1}}^{7}N_{e}
+\epsilon\partial_{x_{1}}^{5}\partial_{x_{2}}^{2}N_{e}\big) \partial_{x_{1}}^{3}U_{2}\nonumber\\
&-\frac{\epsilon^{4}\epsilon^{1/2}H^{2}}{4}\int\partial_{x_{1}} \frac{1}{n_{e}}\big(\partial_{x_{1}}^{6}\partial_{x_{2}}N_{e}
+\epsilon\partial_{x_{1}}^{4}\partial_{x_{2}}^{3}N_{e}\big) \partial_{x_{1}}^{3}U_{2}\nonumber\\
&-\epsilon^{4}\int\partial_{x_{1}}^{4}\big(\partial_{x_{1}} \tilde{u_{i1}}U_{1}
+\epsilon^{1/2}\partial_{x_{2}}\tilde{u_{i1}}U_{2}\big) \partial_{x_{1}}^{4}U_{1}
-\epsilon^{4}\int\partial_{x_{1}}^{4}\big(\partial_{x_{1}}\tilde{u_{i2}}U_{1}
+\epsilon^{1/2}\partial_{x_{2}}\tilde{u_{i2}}U_{2}\big) \partial_{x_{1}}^{4}U_{2}\nonumber\\
&-\epsilon^{5}\int\big(\partial_{x_{1}}^{4}\mathcal{R}_{2}^{(1)} \partial_{x_{1}}^{4}U_{1}
+\epsilon^{5}\epsilon^{1/2}\partial_{x_{1}}^{4}\mathcal{R}_{3}^{(1)} \partial_{x_{1}}^{4}U_{2}\big)
-\epsilon^{4}\int\partial_{x_{1}}^{4}(\partial_{x_{1}}\tilde{n_{e}}N_{e}) \partial_{x_{1}}^{4}U_{1}\nonumber\\
&-\epsilon^{4}\epsilon^{1/2}\int\partial_{x_{1}}^{4} (\partial_{x_{2}}\tilde{n_{e}}N_{e})\partial_{x_{1}}^{4}U_{2}
+\frac{\epsilon^{4}H^{2}}{4}\int\partial_{x_{1}}^{4} \big(-\frac{A_{1}}{n_{e}^{2}}+\frac{A_{2}}{n_{e}^{3}}
+\frac{\epsilon\mathcal{R}_{2}^{(2)} +\mathcal{R}_{2}^{(3)}}{n_{e}^{3}}\big)\partial_{x_{1}}^{4}U_{1}\nonumber\\
&+\frac{\epsilon^{4}\epsilon^{1/2}H^{2}}{4}\int\partial_{x_{1}}^{4} \big(-\frac{B_{1}}{n_{e}^{2}}+\frac{B_{2}}{n_{e}^{3}}
+\frac{\mathcal{R}_{3}^{(2)} +\epsilon\mathcal{R}_{3}^{(3)}}{n_{e}^{3}}\big)\partial_{x_{1}}^{4}U_{2}\nonumber\\
=&:\sum_{i=1}^{18}F_{i}.\label{equ14}
\end{align}

\emph{Estimate of the RHS of \eqref{equ14}.} First, we estimate the second term on the RHS of \eqref{equ14}. Using commutator notation \eqref{commu} to rewrite it as
\begin{equation*}
\begin{split}
F_{2}=&\epsilon^{3}\int[\partial_{x_{1}}^{4},V-u_{i1}]\partial_{x_{1}}U_{1}\partial_{x_{1}}^{4}U_{1}
-\epsilon^{3}\epsilon^{1/2}\int[\partial_{x_{1}}^{4},u_{i2}]\partial_{x_{2}}U_{1}\partial_{x_{1}}^{4}U_{1}\\
&+\epsilon^{3}\int(V-u_{i1})\partial_{x_{1}}^{5}U_{1}\partial_{x_{1}}^{4}U_{1}
-\epsilon^{3}\epsilon^{1/2}\int u_{i2}\partial_{x_{1}}^{4}\partial_{x_{2}}U_{1}\partial_{x_{1}}^{4}U_{1}\\
=&:\sum_{i=1}^{4}F_{2i}.
\end{split}
\end{equation*}
We first estimate $F_{21}$.  By commutator estimate of Lemma \ref{Lem2}, we have
\begin{equation*}
\begin{split}
\|[\partial_{x_{1}}^{4},V-u_{i1}]\partial_{x_{1}}U_{1}\|
\leq\|\partial_{x_{1}}(V-u_{i1})\|_{L^{\infty}}\|\partial_{x_{1}}^{4}U_{1}\|
+\|\partial_{x_{1}}^{4}(V-u_{i1})\|\|\partial_{x_{1}}U_{1}\|_{L^{\infty}}.
\end{split}
\end{equation*}
This yields that
\begin{equation*}
\begin{split}
F_{21}\leq &\epsilon^{3}\|[\partial_{x_{1}}^{4},V-u_{i1}]\partial_{x_{1}}U_{1}\|\|\partial_{x_{1}}^{4}U_{1}\|\\
\leq &C(1+\epsilon^{7}\|\partial_{x_{1}}U_{1}\|_{L^{\infty}}^{2})(\epsilon^{4}\|\partial_{x_{1}}^{4}U_{1}\|^{2}+\epsilon^{5}\|\partial_{x_{1}}U_{1}\|_{L^{\infty}}^{2})\\
\leq &C(1+\epsilon^{7}\|\partial_{x_{1}}U_{1}\|_{H^{2}}^{2})(\epsilon^{4}\|\partial_{x_{1}}^{4}U_{1}\|^{2}+\epsilon^{5}\|\partial_{x_{1}}U_{1}\|_{H^{2}}^{2})\\
\leq &C(1+\epsilon^{2}|\!|\!|(N_{e},U_{1})|\!|\!|_{\epsilon}^{2})|\!|\!|(N_{e},U_{1})|\!|\!|_{\epsilon}^{2},
\end{split}
\end{equation*}
where $|\!|\!|(N_{e},U_{1})|\!|\!|_{\epsilon}^{2}$ is given in \eqref{|||}. Similarly, we obtain
\begin{equation}
\begin{split}\label{G2}
F_{22}\leq &C(1+\epsilon^{8}\|\partial_{x_{2}}U_{1}\|_{L^{\infty}}^{2})(1+\epsilon^{4}\|\partial_{x_{1}}^{4}U_{1}\|^{2}
+\epsilon^{5}\|\partial_{x_{1}}^{3}\partial_{x_{2}}U_{1}\|^{2})\\
\leq &C(1+\epsilon^{2}|\!|\!|(N_{e},U_{1})|\!|\!|_{\epsilon}^{2})|\!|\!|(N_{e},U_{1})|\!|\!|_{\epsilon}^{2}.
\end{split}
\end{equation}
Next, we estimate $F_{23}$. By integration by parts,
\begin{equation*}
\begin{split}
F_{23}=&-\frac{\epsilon^{3}}{2}\int\partial_{x_{1}}(V-u_{i1})(\partial_{x_{1}}^{4}U_{1})^{2}\\
\leq& C(1+\epsilon^{7}\|\partial_{x_{1}}U_{1}\|_{L^{\infty}}^{2})(\epsilon^{4}\|\partial_{x_{1}}^{4}U_{1}\|^{2})\\
\leq &C(1+\epsilon^{2}|\!|\!|(N_{e},\mathbf{U})|\!|\!|_{\epsilon}^{2})|\!|\!|(N_{e},\mathbf{U})|\!|\!|_{\epsilon}^{2},
\end{split}
\end{equation*}
where the Sobolev embedding theorem $H^2\hookrightarrow L^{\infty}$ is used. Similarly, we have
\begin{equation*}
\begin{split}
F_{24}\leq C(1+\epsilon^{2}|\!|\!|(N_{e},\mathbf{U})|\!|\!|_{\epsilon}^{2})|\!|\!|(N_{e},\mathbf{U})|\!|\!|_{\epsilon}^{2}.
\end{split}
\end{equation*}
Thus we have
\begin{equation*}
\begin{split}
F_{2}\leq C(1+\epsilon^{2}|\!|\!|(N_{e},\mathbf{U})|\!|\!|_{\epsilon}^{2})|\!|\!|(N_{e},\mathbf{U})|\!|\!|_{\epsilon}^{2}.
\end{split}
\end{equation*}
Similarly, we have
\begin{equation*}
\begin{split}
F_{3\sim6}\leq C(1+\epsilon^{2}|\!|\!|(N_{e},\mathbf{U})|\!|\!|_{\epsilon}^{2})|\!|\!|(N_{e},\mathbf{U})|\!|\!|_{\epsilon}^{2}.
\end{split}
\end{equation*}
By applying \eqref{one order}, \eqref{two orders} and Lemma \ref{Lem1}, we also obtain
\begin{equation*}
\begin{split}
F_{7\sim18}\leq C(1+\epsilon^{2}|\!|\!|(N_{e},\mathbf{U})|\!|\!|_{\epsilon}^{6})|\!|\!|(N_{e},\mathbf{U})|\!|\!|_{\epsilon}^{2}.
\end{split}
\end{equation*}

\emph{Estimate of the $F_{1}$.} We take $\partial_{x_{1}}^{3}$ of \eqref{rem-1} and applying commutator notation, we obtain
\begin{equation}
\begin{split}\label{z1}
\partial_{x_{1}}^{4}&U_{1}+\epsilon^{1/2}\partial_{x_{1}}^{3}\partial_{x_{2}}U_{2}\\
=&\frac{1}{n_{i}}\Big(-\epsilon\partial_{t}\partial_{x_{1}}^{3}N_{i}+\partial_{x_{1}}^{3}((V-u_{i1})\partial_{x_{1}}N_{i})
-\epsilon^{1/2}\partial_{x_{1}}^{3}(u_{i2}\partial_{x_{2}}N_{i})-[\partial_{x_{1}}^{3},n_{i}]\partial_{x_{1}}U_{1}\\
&-\epsilon^{1/2}[\partial_{x_{1}}^{3},n_{i}]\partial_{x_{2}}U_{2}
-\epsilon\partial_{x_{1}}^{3}(\partial_{x_{1}}\tilde{u_{i1}}N_{i}+\epsilon^{1/2}\partial_{x_{2}}\tilde{u_{i2}}N_{i})
-\epsilon\partial_{x_{1}}^{3}(\partial_{x_{1}}\tilde{n_{i}}U_{1}\\
&+\epsilon^{1/2}\partial_{x_{2}}\tilde{n_{i}}U_{2})
-\epsilon^{2}\partial_{x_{1}}^{3}\mathcal R_{1}\Big)\\
=&:\sum_{i=1}^{8}G_{i}.
\end{split}
\end{equation}
Using \eqref{z1}, we have
 \begin{equation}
\begin{split}\label{z3}
F_{1}=&-\int\left(\epsilon^{3}n_{e}\partial_{x_{1}}^{5}N_{e}-\frac{\epsilon^{4}H^{2}}{4}\frac{\partial_{x_{1}}^{7}N_{e}
+\epsilon\partial_{x_{1}}^{5}\partial_{x_{2}}^{2}N_{e}}{n_{e}}\right)
\big(\partial_{x_{1}}^{4}U_{1}+\epsilon^{1/2}\partial_{x_{1}}^{3}\partial_{x_{2}}U_{2}\big)\\
=&-\int\left(\epsilon^{3}n_{e}\partial_{x_{1}}^{5}N_{e}-\frac{\epsilon^{4}H^{2}}{4}\frac{\partial_{x_{1}}^{7}N_{e}
+\epsilon\partial_{x_{1}}^{5}\partial_{x_{2}}^{2}N_{e}}{n_{e}}\right)\sum_{i=1}^{8}G_{i}\\
=&:\sum_{i=1}^{8}I_{i}.
\end{split}
\end{equation}
We first estimate the terms $I_{i}$ for $4\leq i\leq8$ and leave $I_{i}$ for $1\leq i\leq3$ in the next two steps. For $I_4$, we have
\begin{equation*}
\begin{split}
I_{4}=&\int\left(\epsilon^{3}n_{e}\partial_{x_{1}}^{5}N_{e}-\frac{\epsilon^{4}H^{2}}{4}\frac{\partial_{x_{1}}^{7}N_{e}
+\epsilon\partial_{x_{1}}^{5}\partial_{x_{2}}^{2}N_{e}}{n_{e}}\right) [\partial_{x_{1}}^{3},n_{i}]\partial_{x_{1}}U_{1}\\
\leq &C\big(1+\epsilon^{7}(\|\partial_{x_{1}}U_{1}\|_{L^{\infty}}^{2}+\|\partial_{x_{1}}n_{i}\|_{L^{\infty}}^{2})\big)\\
&\times\big(1+\epsilon^{5}\|\partial_{x_{1}}^{5}N_{e}\|^{2}+\epsilon^{7}\|\partial_{x_{1}}^{7}\partial_{x_{2}}^{2}N_{e}\|^{2}
+\epsilon^{9}\|\partial_{x_{1}}^{5}\partial_{x_{2}}^{2}N_{e}\|^{2}+\epsilon^{3}\|\partial_{x_{1}}^{3}N_{i}\|^{2}\big)\\
\leq &C_{1}\left(1+\epsilon^{2}|\!|\!|(N_{e},U_{1})|\!|\!|_{\epsilon}^{2}\right)\left(1+|\!|\!|(N_{e},U_{1})|\!|\!|_{\epsilon}^{2}\right),
\end{split}
\end{equation*}
thanks to the Sobolev embedding theorem and commutator estimates in Lemma \ref{Lem2}. Similarly, we have
\begin{equation*}
\begin{split}
I_{5\sim8}\leq C_{1}(1+\epsilon^{2}|\!|\!|(N_{e},\mathbf{U})|\!|\!|_{\epsilon}^{2})(1+|\!|\!|(N_{e},\mathbf{U})|\!|\!|_{\epsilon}^{2}).
\end{split}
\end{equation*}

Step 2. \emph{Estimate of $I_{2}+I_{3}$.}
The $I_{2}$ of \eqref{z3} can be divided into
\begin{align*}
I_{2}=&-\epsilon^{3}\int\left(\frac{n_{e}}{n_{i}}\partial_{x_{1}}^{5}N_{e}-\frac{\epsilon H^{2}}{4}\frac{\partial_{x_{1}}^{7}N_{e}
+\epsilon\partial_{x_{1}}^{5}\partial_{x_{2}}^{2}N_{e}}{n_{e}n_{i}}\right)\partial_{x_{1}}^{3}((V-u_{i1})\partial_{x_{1}}N_{i})\\
=&-\epsilon^{3}\int\left(\frac{ n_{e}(V-u_{i1})}{n_{i}}\partial_{x_{1}}^{5}N_{e}-\frac{\epsilon H^{2}}{4}\frac{(V-u_{i1})}{n_{e}n_{i}}(\partial_{x_{1}}^{7}N_{e}
+\epsilon\partial_{x_{1}}^{5}\partial_{x_{2}}^{2}N_{e})\right)\partial_{x_{1}}^{4}N_{i}\\
&-\epsilon^{3}\int\left(\frac{ n_{e}}{n_{i}}\partial_{x_{1}}^{5}N_{e}-\frac{\epsilon H^{2}}{4}\frac{\partial_{x_{1}}^{7}N_{e}
+\epsilon\partial_{x_{1}}^{5}\partial_{x_{2}}^{2}N_{e}}{n_{e}n_{i}}\right) [\partial_{x_{1}}^{3},V-u_{i1}]\partial_{x_{1}}N_{i}\\
=&:I_{21}+I_{22}.
\end{align*}
The estimate of $I_{22}$ is given by
\begin{equation*}
\begin{split}
I_{22}\leq C_{1}(1+\epsilon^{2}|\!|\!|(N_{e},\mathbf{U})|\!|\!|_{\epsilon}^{2})|\!|\!|(N_{e},\mathbf{U})|\!|\!|_{\epsilon}^{2}.
\end{split}
\end{equation*}
Next we estimate $I_{21}$. For this we recall from \eqref{rem-4} that
\begin{equation*}
\begin{split}
\partial_{x_{1}}^{4}N_{i}=&\partial_{x_{1}}^{4}N_{e}-\epsilon\partial_{x_{1}}^{4}(n_{e}\partial_{x_{1}}^{2}N_{e})
-\epsilon^{2}\partial_{x_{1}}^{4}(n_{e}\partial_{x_{2}}^{2}N_{e})
+\frac{\epsilon^{2}H^{2}}{4}\partial_{x_{1}}^{4}\big(\frac{\partial_{x_{1}}^{4}N_{e}}{n_{e}}\big)
+\frac{\epsilon^{3}H^{2}}{2}\partial_{x_{1}}^{4}\big(\frac{\partial_{x_{1}}^{2}\partial_{x_{2}}^{2}N_{e}}{n_{e}}\big)\\
&+\frac{\epsilon^{4}H^{2}}{4}\partial_{x_{1}}^{4}\big(\frac{\partial_{x_{2}}^{4}N_{e}}{n_{e}}\big)
-\epsilon^{2}\partial_{x_{1}}^{4}\big(\partial_{x_{1}}\tilde{n_{e}}\partial_{x_{1}}N_{e}+\epsilon\partial_{x_{2}}\tilde{n_{e}}\partial_{x_{2}}N_{e}\big)\\
&-\epsilon^{2}\partial_{x_{1}}^{4}\big(\epsilon^{4}(\partial_{x_{1}}N_{e})^{2}+\epsilon^{5}(\partial_{x_{2}}N_{e})^{2}\big)
-\epsilon^{2}\partial_{x_{1}}^{4}\big(\partial_{x_{1}}^{2}\tilde{n_{e}}N_{e}+\epsilon\partial_{x_{2}}^{2}\tilde{n_{e}}N_{e}\big)
-\epsilon^{2}\partial_{x_{1}}^{4}\mathcal{R}_{4}^{(1)}\\
&-\frac{\epsilon^{2}H^{2}}{4}\partial_{x_{1}}^{4}\big(\frac{C_{1}}{n_{e}^{2}}-\frac{C_{2}}{n_{e}^{3}}+\frac{C_{3}}{n_{e}^{4}}
+\frac{\epsilon\mathcal{R}_{4}^{(2)}+\epsilon\mathcal{R}_{4}^{(3)}}{n_{e}^{4}}\big)\\
=&:\sum_{i=1}^{11}H_{i}.
\end{split}
\end{equation*}
Thus we have
\begin{equation*}
\begin{split}
I_{21}&=-\epsilon^{3}\int\left(\frac{ n_{e}(V-u_{i1})}{n_{i}}\partial_{x_{1}}^{5}N_{e}-\frac{\epsilon H^{2}}{4}\frac{(V-u_{i1})}{n_{e}n_{i}}(\partial_{x_{1}}^{7}N_{e}
+\epsilon\partial_{x_{1}}^{5}\partial_{x_{2}}^{2}N_{e})\right)\sum_{i=1}^{11}H_{i}\\
&=:\sum_{i=1}^{11}I_{21i}.
\end{split}
\end{equation*}
By integration by parts and commutator notation, we have
\begin{equation*}
\begin{split}
I_{211}=&\frac{\epsilon^{3}}{2}\int\partial_{x_{1}}\big(\frac{n_{e}(V-u_{i1})}{n_{i}}\big)(\partial_{x_{1}}^{4}N_{e})^{2}
+\frac{\epsilon^{4}H^{2}}{8}\int\partial_{x_{1}}\big(\frac{V-u_{i1}}{n_{e}n_{i}}\big)(\partial_{x_{1}}^{5}N_{e})^{2}\\
&+\frac{\epsilon^{5}H^{2}}{8}\int\partial_{x_{1}}\big(\frac{V-u_{i1}}{n_{e}n_{i}}\big)(\partial_{x_{1}}^{4}\partial_{x_{2}}N_{e})^{2}
-\frac{\epsilon^{4}H^{2}}{4}\int\partial_{x_{1}}\big(\frac{V-u_{i1}}{n_{e}n_{i}}\big)\partial_{x_{1}}^{6}N_{e}\partial_{x_{1}}^{4}N_{e}\\
&-\frac{\epsilon^{5}H^{2}}{4}\int\partial_{x_{2}}\big(\frac{V-u_{i1}}{n_{e}n_{i}}\big)\partial_{x_{1}}^{5}\partial_{x_{2}}N_{e}\partial_{x_{1}}^{4}N_{e}\\
=&:\sum_{i=1}^{5}I_{211}^{(i)}.
\end{split}
\end{equation*}
By computation, we have
\begin{equation*}
\begin{split}
\left|\partial_{x_{1}}\left(\frac{n_{e}(V-u_{i1})}{n_{i}}\right)\right| \leq C\left(\epsilon+\epsilon^{5}(|\partial_{x_{1}}N_{e}| +|\partial_{x_{1}}N_{i}|+|\partial_{x_{1}}U_{1}|)\right),
\end{split}
\end{equation*}
yielding the estimates
\begin{equation*}
\begin{split}
I_{211}^{(1)}
&\leq C\big(1+\epsilon^{7}(\|\partial_{x_{1}}N_{e}\|_{L^{\infty}}^{2}+\|\partial_{x_{1}}U_{1}\|_{L^{\infty}}^{2}
+\|\partial_{x_{1}}N_{i}\|_{L^{\infty}}^{2})\big)
(\epsilon^{4}\|\partial_{x_{1}}^{4}N_{e}\|^{2})\\
&\leq C_{1}\big(1+\epsilon^{2}|\!|\!|(N_{e},\mathbf{U})|\!|\!|_{\epsilon}^{2}\big)|\!|\!|(N_{e},\mathbf{U})|\!|\!|_{\epsilon}^{2}.
\end{split}
\end{equation*}
The other terms in $I_{211}$ can be bounded similarly by
\begin{equation*}
\begin{split}
I_{211}^{(2\sim5)}
\leq C_{1}(1+\epsilon^{2}|\!|\!|(N_{e}, \mathbf{U})|\!|\!|_{\epsilon}^{2})|\!|\!|(N_{e}, \mathbf{U})|\!|\!|_{\epsilon}^{2},
\end{split}
\end{equation*}
which yields
\begin{equation*}
\begin{split}
I_{211}\leq C_{1}(1+\epsilon^{2}|\!|\!|(N_{e},\mathbf{U})|\!|\!|_{\epsilon}^{2})|\!|\!|(N_{e},\mathbf{U})|\!|\!|_{\epsilon}^{2}.
\end{split}
\end{equation*}
By integration by parts and commutator notation again, we have
\begin{equation*}
\begin{split}
I_{212}=&-\frac{\epsilon^{4}}{2}\int\partial_{x_{1}}\left(\frac{n_{e}^{2}(V-u_{i1})}{n_{i}}\right)(\partial_{x_{1}}^{5}N_{e})^{2}
+\frac{\epsilon^{5}H^{2}}{8}\int\partial_{x_{1}}\big(\frac{V-u_{i1}}{n_{i}}\big)(\partial_{x_{1}}^{6}N_{e})^{2}\\
&+\frac{\epsilon^{6}H^{2}}{8}\int\partial_{x_{1}}\big(\frac{V-u_{i1}}{n_{i}}\big)(\partial_{x_{1}}^{5}\partial_{x_{2}}N_{e})^{2}
-\frac{\epsilon^{6}H^{2}}{4}\int\partial_{x_{1}}\big(\frac{V-u_{i1}}{n_{e}n_{i}}\big)\partial_{x_{1}}^{6}N_{e}\partial_{x_{1}}^{4}N_{e}\\
&+\epsilon^{4}\int\left(\frac{n_{e}^{2}(V-u_{i1})}{n_{i}}\partial_{x_{1}}^{5}N_{e}
-\frac{\epsilon H^{2}}{4}\frac{V-u_{i1}}{n_{i}}(\partial_{x_{1}}^{7}N_{e}
+\epsilon\partial_{x_{1}}^{5}\partial_{x_{2}}^{2}N_{e})\right) [\partial_{x_{1}}^{4},n_{e}]\partial_{x_{1}}^{2}N_{e}\\
&-\frac{\epsilon^{6}H^{2}}{4}\int\partial_{x_{2}}\big(\frac{V-u_{i1}}{n_{i}}\big)\partial_{x_{1}}^{5}\partial_{x_{2}}N_{e}\partial_{x_{1}}^{6}N_{e}.
\end{split}
\end{equation*}
Similar to $I_{211}$, using commutator estimate, we have
\begin{equation*}
\begin{split}
I_{212}\leq C_{1}(1+\epsilon^{2}|\!|\!|(N_{e},\mathbf{U})|\!|\!|_{\epsilon}^{2})|\!|\!|(N_{e},\mathbf{U})|\!|\!|_{\epsilon}^{2}.
\end{split}
\end{equation*}
Similarly, we have
\begin{equation*}
\begin{split}
I_{213}\leq C_{1}(1+\epsilon^{2}|\!|\!|(N_{e},\mathbf{U})|\!|\!|_{\epsilon}^{2})|\!|\!|(N_{e},\mathbf{U})|\!|\!|_{\epsilon}^{2}.
\end{split}
\end{equation*}
By integration by parts and commutator notation, we have
\begin{equation*}
\begin{split}
I_{214}=&-\frac{\epsilon^{5}H^{2}}{8}\int\partial_{x_{1}}\big(\frac{V-u_{i1}}{n_{i}}\big)(\partial_{x_{1}}^{6}N_{e})^{2}
-\frac{1}{2}\frac{\epsilon^{6}H^{4}}{16}\int\partial_{x_{1}}\big(\frac{V-u_{i1}}{n_{e}^{2}n_{i}}\big)(\partial_{x_{1}}^{7}N_{e})^{2}\\
&+\frac{1}{2}\frac{\epsilon^{7}H^{2}}{16}\int\partial_{x_{1}}\big(\frac{V-u_{i1}}{n_{e}^{2}n_{i}}\big)(\partial_{x_{1}}^{6}\partial_{x_{2}}N_{e})^{2}
+\frac{\epsilon^{5}H^{2}}{4}\int\partial_{x_{1}}\big(\frac{V-u_{i1}}{n_{i}}\big)\partial_{x_{1}}^{5}N_{e}\partial_{x_{1}}^{7}N_{e}\\
&+\frac{\epsilon^{7}H^{4}}{16}\int\partial_{x_{1}}\big(\frac{V-u_{i1}}{n_{e}^{2}n_{i}}\big)\partial_{x_{1}}^{5}\partial_{x_{2}}^{2}N_{e}\partial_{x_{1}}^{7}N_{e}
-\frac{\epsilon^{7}H^{4}}{16}\int\partial_{x_{2}}\big(\frac{V-u_{i1}}{n_{e}^{2}n_{i}}\big)\partial_{x_{1}}^{6}\partial_{x_{2}}N_{e}\partial_{x_{1}}^{7}N_{e}\\
&-\frac{\epsilon^{5}H^{4}}{4}\int\left(\frac{n_{e}(V-u_{i1})}{n_{i}}\partial_{x_{1}}^{5}N_{e}
-\frac{\epsilon H^{2}}{4}\frac{V-u_{i1}}{n_{e}n_{i}}(\partial_{x_{1}}^{7}N_{e}
+\epsilon\partial_{x_{1}}^{5}\partial_{x_{2}}^{2}N_{e})\right) [\partial_{x_{1}}^{4},\frac{1}{n_{e}}]\partial_{x_{1}}^{4}N_{e}.
\end{split}
\end{equation*}
Similar to $I_{211}$, using commutator estimate, we have
\begin{equation*}
\begin{split}
I_{214}\leq C_{1}(1+\epsilon^{2}|\!|\!|(N_{e},\mathbf{U})|\!|\!|_{\epsilon}^{4})|\!|\!|(N_{e},\mathbf{U})|\!|\!|_{\epsilon}^{2}.
\end{split}
\end{equation*}
Similarly, we have
\begin{equation*}
\begin{split}
I_{215\sim2111}\leq C_{1}(1+\epsilon^{2}|\!|\!|(N_{e},\mathbf{U})|\!|\!|_{\epsilon}^{2})|\!|\!|(N_{e},\mathbf{U})|\!|\!|_{\epsilon}^{2}.
\end{split}
\end{equation*}
Thus we have
\begin{equation*}
\begin{split}
I_{2}\leq C_{1}(1+\epsilon^{2}|\!|\!|(N_{e},\mathbf{U})|\!|\!|_{\epsilon}^{4})|\!|\!|(N_{e},\mathbf{U})|\!|\!|_{\epsilon}^{2}.
\end{split}
\end{equation*}
Similarly, we have
\begin{equation*}
\begin{split}
I_{3}\leq C_{1}(1+\epsilon^{2}|\!|\!|(N_{e},\mathbf{U})|\!|\!|_{\epsilon}^{4})|\!|\!|(N_{e},\mathbf{U})|\!|\!|_{\epsilon}^{2}.
\end{split}
\end{equation*}

Step 3. \emph{Estimate of $I_{1}$.}
Taking $\partial_{x_{1}}^{3}$ with \eqref{rem-4}, we have
\begin{equation*}
\begin{split}
\partial_{t}\partial_{x_{1}}^{3}N_{i}=&\partial_{t} \partial_{x_{1}}^{3}N_{e}-\epsilon\partial_{t} \partial_{x_{1}}^{3}\left(n_{e}\partial_{x_{1}}^{2}N_{e}\right) -\epsilon^{2}\partial_{t}\partial_{x_{1}}^{3}\left(n_{e} \partial_{x_{2}}^{2}N_{e}\right)\\
&+\frac{\epsilon^{2}H^{2}}{4}\partial_{t} \partial_{x_{1}}^{3}\left(\frac{\partial_{x_{1}}^{4}N_{e}}{n_{e}}\right) +\frac{\epsilon^{3}H^{2}}{2}\partial_{t}\partial_{x_{1}}^{3} \left(\frac{\partial_{x_{1}}^{2}\partial_{x_{2}}^{2}N_{e}}{n_{e}}\right) +\frac{\epsilon^{4}H^{2}}{4}\partial_{t}\partial_{x_{1}}^{3} \left(\frac{\partial_{x_{2}}^{4}N_{e}}{n_{e}}\right)\\
&-\epsilon^{2}\partial_{t}\partial_{x_{1}}^{3} \left(\partial_{x_{1}}\tilde{n_{e}}\partial_{x_{1}}N_{e} +\epsilon\partial_{x_{2}}\tilde{n_{e}}\partial_{x_{2}}N_{e}\right) -\epsilon^{2}\partial_{t}\partial_{x_{1}}^{3} \left(\epsilon^{4}\left(\partial_{x_{1}}N_{e}\right)^{2} +\epsilon^{5}\left(\partial_{x_{2}}N_{e}\right)^{2}\right)\\
&-\epsilon^{2}\partial_{t}\partial_{x_{1}}^{3} \left(\partial_{x_{1}}^{2}\tilde{n_{e}}N_{e} +\epsilon\partial_{x_{2}}^{2}\tilde{n_{e}}N_{e}\right) -\epsilon^{2}\partial_{t}\partial_{x_{1}}^{3}\mathcal{R}_{4}^{(1)}\\
&-\frac{\epsilon^{2}H^{2}}{4}\partial_{t}\partial_{x_{1}}^{3} \left(\frac{C_{1}}{n_{e}^{2}}-\frac{C_{2}}{n_{e}^{3}} +\frac{C_{3}}{n_{e}^{4}}+\frac{\epsilon\mathcal{R}_{4}^{(2)} +\epsilon\mathcal{R}_{4}^{(3)}}{n_{e}^{4}}\right)\\
=&:\sum_{i=1}^{11}K_{i}.
\end{split}
\end{equation*}
From \eqref{z3}, we have
\begin{equation}
\begin{split}\label{orin}
I_{1}=&\epsilon^{4}\int\left(\frac{n_{e}}{n_{i}}\partial_{x_{1}}^{5}N_{e}-\frac{\epsilon H^{2}}{4}\frac{\partial_{x_{1}}^{7}N_{e}
+\epsilon\partial_{x_{1}}^{5}\partial_{x_{2}}^{2}N_{e}}{n_{e}n_{i}}\right)\sum_{i=1}^{11}K_{i} =:\sum_{i=1}^{11}I_{1i}.
\end{split}
\end{equation}
For convenience, we denote
\begin{equation*}
\begin{split}
I_{11}=&\epsilon^{4}\int\left(\frac{n_{e}}{n_{i}}\partial_{x_{1}}^{5}N_{e}-\frac{\epsilon H^{2}}{4}\frac{\partial_{x_{1}}^{7}N_{e}
+\epsilon\partial_{x_{1}}^{5}\partial_{x_{2}}^{2}N_{e}}{n_{e}n_{i}}\right)\partial_{t}\partial_{x_{1}}^{3}N_{e}
=:\sum_{i=1}^{3}I_{11i}.
\end{split}
\end{equation*}
By integration by parts, we have
\begin{equation*}
\begin{split}
I_{111}=&-\frac{\epsilon^{4}}{2}\frac{d}{dt}\int\frac{n_{e}}{n_{i}}(\partial_{x_{1}}^{4}N_{e})^{2}
+\frac{\epsilon^{4}}{2}\int\partial_{t}(\frac{n_{e}}{n_{i}})(\partial_{x_{1}}^{4}N_{e})^{2}\\
&-\epsilon^{4}\int\partial_{x_{1}}(\frac{n_{e}}{n_{i}})\partial_{x_{1}}^{4}N_{e}\partial_{t}\partial_{x_{1}}^{3}N_{e}.
\end{split}
\end{equation*}
By Lemma \ref{L2} and \ref{L3}, the second term and the third term can be bounded respectively by
\begin{equation*}
\begin{split}
\frac{\epsilon^{4}}{2}\int\partial_{t}(\frac{n_{e}}{n_{i}}) (\partial_{x_{1}}^{4}N_{e})^{2}
&\leq C_{1}\big(1+\epsilon^{7}(\|\epsilon\partial_{t}N_{e}\|_{L^{\infty}}^{2}+\|\epsilon\partial_{t}N_{i}\|_{L^{\infty}}^{2})\big)
(\epsilon^{4}\|\partial_{x_{1}}^{4}N_{e}\|^{2})\\
&\leq C_{1}\big(1+\epsilon^{7}(\|\epsilon\partial_{t}N_{e}\|_{H^{2}}^{2}+\|\epsilon\partial_{t}N_{i}\|_{H^{2}}^{2})\big)
(\epsilon^{4}\|\partial_{x_{1}}^{4}N_{e}\|^{2})\\
&\leq C_{1}\big(1+\epsilon^{2}|\!|\!|(N_{e},\mathbf{U})|\!|\!|_{\epsilon}^{2}\big)|\!|\!|(N_{e},\mathbf{U})|\!|\!|_{\epsilon}^{2},
\end{split}
\end{equation*}
and
\begin{equation*}
\begin{split}
-\epsilon^{4}&\int\partial_{x_{1}}(\frac{n_{e}}{n_{i}})\partial_{x_{1}}^{4}N_{e}\partial_{t}\partial_{x_{1}}^{3}N_{e}\\
&\leq C_{1}\big(1+\epsilon^{7}(\|\partial_{x_{1}}N_{e}\|_{L^{\infty}}^{2}+\|\partial_{x_{1}}N_{i}\|_{L^{\infty}}^{2})\big)
\big(\epsilon^{4}\|\partial_{x_{1}}^{4}N_{e}\|^{2}+\epsilon^{6}\|\partial_{t}\partial_{x_{1}}^{3}N_{e}\|^{2}\big)\\
&\leq C_{1}(1+\epsilon^{2}|\!|\!|(N_{e},\mathbf{U})|\!|\!|_{\epsilon}^{2})|\!|\!|(N_{e},\mathbf{U})|\!|\!|_{\epsilon}^{2}.
\end{split}
\end{equation*}
Thus, we have
\begin{equation}
\begin{split}\label{a}
I_{111}\leq-\frac{\epsilon^{4}}{2}\frac{d}{dt}\int\frac{n_{e}}{n_{i}} (\partial_{x_{1}}^{4}N_{e})^{2} +C_{1}(1+\epsilon^{2}|\!|\!|(N_{e},\mathbf{U})|\!|\!|_{\epsilon}^{2}) |\!|\!|(N_{e},\mathbf{U})|\!|\!|_{\epsilon}^{2}.
\end{split}
\end{equation}
Similarly, the other two  terms in $I_{11}$ can be bounded by
\begin{equation}
\begin{split}\label{b}
I_{112}=&-\frac{1}{2}\frac{\epsilon^{5}H^{2}}{4}\frac{d}{dt}\int\frac{1}{n_{e}n_{i}}(\partial_{x_{1}}^{5}N_{e})^{2}
+\frac{1}{2}\frac{\epsilon^{5}H^{2}}{4}\int\partial_{t}\big(\frac{1}{n_{e}n_{i}}\big)(\partial_{x_{1}}^{5}N_{e})^{2}\\
&-\frac{\epsilon^{5}H^{2}}{2}\int\partial_{x_{1}}\big(\frac{1}{n_{e}n_{i}}\big)\partial_{x_{1}}^{5}N_{e}\partial_{t}\partial_{x_{1}}^{4}N_{e}
-\frac{\epsilon^{5}H^{2}}{4}\int\partial_{x_{1}}^{2}\big(\frac{1}{n_{e}n_{i}}\big)\partial_{x_{1}}^{5}N_{e}\partial_{t}\partial_{x_{1}}^{3}N_{e}\\
\leq& -\frac{1}{2}\frac{\epsilon^{5}H^{2}}{4}\frac{d}{dt}\int\frac{1}{n_{e}n_{i}}(\partial_{x_{1}}^{5}N_{e})^{2}
+C_{1}(1+\epsilon^{2}|\!|\!|(N_{e},\mathbf{U})|\!|\!|_{\epsilon}^{4})|\!|\!|(N_{e},\mathbf{U})|\!|\!|_{\epsilon}^{2},
\end{split}
\end{equation}
and
\begin{equation}
\begin{split}\label{c}
I_{113}
\leq -\frac{1}{2}\frac{\epsilon^{6}H^{2}}{4}\frac{d}{dt}\int\frac{1}{n_{e}n_{i}}(\partial_{x_{1}}^{4}\partial_{x_{2}}N_{e})^{2}
+C_{1}(1+\epsilon^{2}|\!|\!|(N_{e},\mathbf{U})|\!|\!|_{\epsilon}^{4})|\!|\!|(N_{e},\mathbf{U})|\!|\!|_{\epsilon}^{2}.
\end{split}
\end{equation}
respectively, thanks to Lemma \ref{L2} and \ref{L3}. By \eqref{a}, \eqref{b}, \eqref{c}, we have
\begin{equation}
\begin{split}\label{d}
I_{11}
\leq &-\frac{\epsilon^{4}}{2}\frac{d}{dt}\int\frac{n_{e}}{n_{i}} \left(\partial_{x_{1}}^{4}N_{e}\right)^{2}
-\frac{\epsilon^{5}}{2}\frac{H^{2}}{4}\frac{d}{dt}\int\frac{1}{n_{e}n_{i}} \left(\left(\partial_{x_{1}}^{5}N_{e}\right)^{2}
+\epsilon\left(\partial_{x_{1}}^{4}\partial_{x_{2}}N_{e}\right)^{2}\right)\\
&+C_{1}(1+\epsilon^{2}|\!|\!|(N_{e},\mathbf{U})|\!|\!|_{\epsilon}^{4})|\!|\!|(N_{e},\mathbf{U})|\!|\!|_{\epsilon}^{2}.
\end{split}
\end{equation}
For convenience, we rewrite
\begin{equation*}
\begin{split}
I_{12}=&-\epsilon^{5}\int\left(\frac{n_{e}}{n_{i}}\partial_{x_{1}}^{5}N_{e}-\frac{\epsilon H^{2}}{4}\frac{\partial_{x_{1}}^{7}N_{e}
+\epsilon\partial_{x_{1}}^{5}\partial_{x_{2}}^{2}N_{e}}{n_{e}n_{i}}\right)\partial_{t}\partial_{x_{1}}^{3}(n_{e}\partial_{x_{1}}^{2}N_{e})\\
=&:\sum_{i=1}^{3}I_{12i}.
\end{split}
\end{equation*}
By integration by parts, we have
\begin{equation*}
\begin{split}
I_{121}=&-\frac{\epsilon^{5}}{2}\frac{d}{dt}\int\frac{n_{e}^{2}}{n_{i}}(\partial_{x_{1}}^{5}N_{e})^{2}
-\frac{\epsilon^{5}}{2}\int\partial_{t}\big(\frac{n_{e}^{2}}{n_{i}}\big)(\partial_{x_{1}}^{5}N_{e})^{2}\\
&-\epsilon^{5}\int\frac{n_{e}}{n_{i}}\partial_{x_{1}}^{5}N_{e}
\left([\partial_{x_{1}}^{3},\partial_{t}n_{e}]\partial_{x_{1}}^{2}N_{e} +[\partial_{x_{1}}^{3},n_{e}]\partial_{t}\partial_{x_{1}}^{2}N_{e}\right)\\
=:&\sum_{i=1}^{3}I_{121}^{(i)}.
\end{split}
\end{equation*}
Applying thanks to Lemma \ref{L2} and \ref{L3}, we have
\begin{equation*}
\begin{split}
I_{121}^{(1)}\leq C_{1}(1+\epsilon^{2}|\!|\!|(N_{e},\mathbf{U})|\!|\!|_{\epsilon}^{2}) |\!|\!|(N_{e},\mathbf{U})|\!|\!|_{\epsilon}^{2},
\end{split}
\end{equation*}
and
\begin{equation*}
\begin{split}
I_{121}^{(3)}\leq &C\epsilon^{5}\|\partial_{x_{1}}^{5}N_{e}\|\big(\|[\partial_{x_{1}}^{3},\partial_{t}n_{e}]\partial_{x_{1}}^{2}N_{e}\|
+\|[\partial_{x_{1}}^{3},n_{e}]\partial_{t}\partial_{x_{1}}^{2}N_{e}\big)\|)\\
\leq &C\epsilon^{5}\|\partial_{x_{1}}^{5}N_{e}\|\big(\|\partial_{t}\partial_{x_{1}}n_{e}\|_{L^{\infty}}\|\partial_{x_{1}}^{4}N_{e}\|
+\|\partial_{x_{1}}^{2}N_{e}\|_{L^{\infty}}\|\partial_{t}\partial_{x_{1}}^{3}n_{e}\|\\
&+\|\partial_{x_{1}}n_{e}\|_{L^{\infty}}\|\partial_{t}\partial_{x_{1}}^{4}N_{e}\|
+\|\partial_{t}\partial_{x_{1}}^{2}N_{e}\|_{L^{\infty}}\|\partial_{x_{1}}^{3}n_{e}\|\big)\\
\leq &C_{1}(1+\epsilon^{2}|\!|\!|(N_{e}, \mathbf{U})|\!|\!|_{\epsilon}^{2})|\!|\!|(N_{e}, \mathbf{U})|\!|\!|_{\epsilon}^{2},
\end{split}
\end{equation*}
thanks to the commutator estimates. Thus we have
\begin{equation*}
\begin{split}
I_{121}\leq &-\frac{\epsilon^{5}}{2}\frac{d}{dt}\int\frac{n_{e}^{2}}{n_{i}}(\partial_{x_{1}}^{5}N_{e})^{2}
+C_{1}(1+\epsilon^{2}|\!|\!|(N_{e},\mathbf{U})|\!|\!|_{\epsilon}^{2})|\!|\!|(N_{e},\mathbf{U})|\!|\!|_{\epsilon}^{2}.
\end{split}
\end{equation*}
By integration by parts, we have
\begin{equation*}
\begin{split}
I_{122}=&-\frac{\epsilon^{6}H^{2}}{4}\frac{d}{dt}\int\frac{1}{n_{i}}(\partial_{x_{1}}^{6}N_{e})^{2}\\
&+\frac{1}{2}\frac{\epsilon^{6}H^{2}}{4}\int\partial_{t}(\frac{1}{n_{i}})(\partial_{x_{1}}^{6}N_{e})^{2}
-\frac{\epsilon^{6}H^{2}}{4}\int\partial_{x_{1}}(\frac{1}{n_{i}})\partial_{x_{1}}^{6}N_{e}\partial_{t}\partial_{x_{1}}^{5}N_{e}\\
&+\frac{\epsilon^{6}H^{2}}{4}\int\frac{1}{n_{e}n_{i}}\partial_{x_{1}}^{7}N_{e}
\big([\partial_{x_{1}}^{3},\partial_{t}n_{e}]\partial_{x_{1}}^{2}N_{e}+[\partial_{x_{1}}^{3},n_{e}]\partial_{t}\partial_{x_{1}}^{2}N_{e}\big),
\end{split}
\end{equation*}
and
\begin{equation*}
\begin{split}
I_{123}=&-\frac{\epsilon^{7}H^{2}}{4}\frac{d}{dt}\int\frac{1}{n_{i}}(\partial_{x_{1}}^{5}\partial_{x_{2}}N_{e})^{2}\\
&+\frac{\epsilon^{7}H^{2}}{4}\int\frac{1}{n_{e}n_{i}}\partial_{x_{1}}^{5}\partial_{x_{2}}^{2}N_{e}
\big([\partial_{x_{1}}^{3},\partial_{t}n_{e}]\partial_{x_{1}}^{2}N_{e}+[\partial_{x_{1}}^{3},n_{e}]\partial_{t}\partial_{x_{1}}^{2}N_{e}\big)\\
&+\frac{1}{2}\frac{\epsilon^{7}H^{2}}{4}\int\partial_{t}(\frac{1}{n_{i}})(\partial_{x_{1}}^{5}\partial_{x_{2}}N_{e})^{2}
-\frac{\epsilon^{7}H^{2}}{4}\int\partial_{x_{2}}(\frac{1}{n_{i}}) \partial_{x_{1}}^{5}\partial_{x_{2}}N_{e}\partial_{t} \partial_{x_{1}}^{5}N_{e},
\end{split}
\end{equation*}
yielding
\begin{equation*}
\begin{split}
I_{122}+I_{123}\leq &-\frac{\epsilon^{6}H^{2}}{4}\frac{d}{dt}\int\frac{1}{n_{i}}(\partial_{x_{1}}^{6}N_{e})^{2}
-\frac{\epsilon^{7}H^{2}}{4}\frac{d}{dt}\int\frac{1}{n_{i}}(\partial_{x_{1}}^{5}\partial_{x_{2}}N_{e})^{2}\\
&+C_{1}(1+\epsilon^{2}|\!|\!|(N_{e},\mathbf{U})|\!|\!|_{\epsilon}^{2})|\!|\!|(N_{e},\mathbf{U})|\!|\!|_{\epsilon}^{2}.
\end{split}
\end{equation*}
Thus we have
\begin{equation}
\begin{split}\label{e}
I_{12}\leq &-\frac{\epsilon^{5}}{2}\frac{d}{dt}\int\frac{n_{e}^{2}}{n_{i}}(\partial_{x_{1}}^{5}N_{e})^{2}
-\frac{\epsilon^{6}H^{2}}{4}\frac{d}{dt}\int\frac{1}{n_{i}}\big((\partial_{x_{1}}^{6}N_{e})^{2}
+\epsilon(\partial_{x_{1}}^{5}\partial_{x_{2}}N_{e})^{2}\big)\\
&+C_{1}(1+\epsilon^{2}|\!|\!|(N_{e},\mathbf{U})|\!|\!|_{\epsilon}^{2})|\!|\!|(N_{e},\mathbf{U})|\!|\!|_{\epsilon}^{2}.
\end{split}
\end{equation}
For $I_{12}$, we have
\begin{equation}
\begin{split}\label{f}
I_{13}=&-\epsilon^{6}\int\big(\frac{n_{e}}{n_{i}}\partial_{x_{1}}^{5}N_{e}-\frac{\epsilon H^{2}}{4}\frac{\partial_{x_{1}}^{7}N_{e}
+\epsilon\partial_{x_{1}}^{5}\partial_{x_{2}}^{2}N_{e}}{n_{e}n_{i}}\big)\partial_{t}\partial_{x_{1}}^{3}(n_{e}\partial_{x_{2}}^{2}N_{e})\\
\leq &-\frac{\epsilon^{6}}{2}\frac{d}{dt}\int\frac{n_{e}^{2}}{n_{i}}(\partial_{x_{1}}^{4}\partial_{x_{2}}N_{e})^{2}
-\frac{1}{2}\frac{\epsilon^{7}H^{2}}{4}\frac{d}{dt}\int\frac{1}{n_{i}}\big((\partial_{x_{1}}^{5}\partial_{x_{2}}N_{e})^{2}
+\epsilon(\partial_{x_{1}}^{4}\partial_{x_{2}}^{2}N_{e})^{2}\big)\\
&+C_{1}(1+\epsilon^{2}|\!|\!|(N_{e},\mathbf{U})|\!|\!|_{\epsilon}^{2})|\!|\!|(N_{e},\mathbf{U})|\!|\!|_{\epsilon}^{2}.
\end{split}
\end{equation}
For $I_{14}$, we rewrite
\begin{equation*}
\begin{split}
I_{14}=&\frac{\epsilon^{6}H^{2}}{4}\int\left(\frac{n_{e}}{n_{i}} \partial_{x_{1}}^{5}N_{e}-\frac{\epsilon H^{2}}{4}\frac{\partial_{x_{1}}^{7}N_{e} +\epsilon\partial_{x_{1}}^{5}\partial_{x_{2}}^{2}N_{e}}{n_{e}n_{i}}\right) \partial_{t}\partial_{x_{1}}^{3}\left(\frac{\partial_{x_{1}}^{4}N_{e}}{n_{e}}\right) =:\sum_{i=1}^{3}I_{14i}.
\end{split}
\end{equation*}
By integration by parts, we can rewrite
\begin{equation*}
\begin{split}
I_{141}=&-\frac{1}{2}\frac{\epsilon^{6}H^{2}}{4}\frac{d}{dt}\int\frac{1}{n_{i}}(\partial_{x_{1}}^{6}N_{e})^{2}
+\frac{1}{2}\frac{\epsilon^{6}H^{2}}{4}\int\partial_{t}(\frac{1}{n_{i}})(\partial_{x_{1}}^{6}N_{e})^{2}\\
&-\frac{\epsilon^{6}H^{2}}{4}\int\partial_{x_{1}}(\frac{1}{n_{i}})\partial_{x_{1}}^{5}N_{e}\partial_{t}\partial_{x_{1}}^{6}N_{e}\\
&+\frac{\epsilon^{6}H^{2}}{4}\int\frac{n_{e}}{n_{i}}\partial_{x_{1}}^{5}N_{e}
\left([\partial_{x_{1}}^{3},\partial_{t}\frac{1}{n_{e}}]\partial_{x_{1}}^{4}N_{e}
+[\partial_{x_{1}}^{3},\frac{1}{n_{e}}]\partial_{t} \partial_{x_{1}}^{4}N_{e}\right),
\end{split}
\end{equation*}
which, thanks to Lemma \ref{L2} and \ref{L3}, yields the estimates
\begin{equation*}
\begin{split}
I_{141}\leq-\frac{1}{2}\frac{\epsilon^{6}H^{2}}{4}\frac{d}{dt}\int\frac{1}{n_{i}}(\partial_{x_{1}}^{6}N_{e})^{2}
+C_{1}(1+\epsilon^{2}|\!|\!|(N_{e},\mathbf{U})|\!|\!|_{\epsilon}^{4})|\!|\!|(N_{e},\mathbf{U})|\!|\!|_{\epsilon}^{2}.
\end{split}
\end{equation*}
For $I_{142}$, we have by integration by parts
\begin{equation*}
\begin{split}
I_{142}=&-\frac{1}{2}\frac{\epsilon^{7}H^{4}}{16}\frac{d}{dt}\int\frac{1}{n_{e}^{2}n_{i}}(\partial_{x_{1}}^{7}N_{e})^{2}
+\frac{1}{2}\frac{\epsilon^{7}H^{4}}{16}\int\partial_{t}\big(\frac{1}{n_{e}^{2}n_{i}}\big)(\partial_{x_{1}}^{7}N_{e})^{2}\\
&-\frac{\epsilon^{7}H^{4}}{16}\int\frac{1}{n_{e}n_{i}}\partial_{x_{1}}^{7}N_{e}
\left([\partial_{x_{1}}^{3},\partial_{t}\frac{1}{n_{e}}]\partial_{x_{1}}^{4}N_{e}
+[\partial_{x_{1}}^{3},\frac{1}{n_{e}}] \partial_{t}\partial_{x_{1}}^{4}N_{e}\right),
\end{split}
\end{equation*}
yielding
\begin{equation*}
\begin{split}
I_{142}\leq-\frac{\epsilon^{7}}{2}\frac{H^{4}}{16} \frac{d}{dt}\int\frac{1}{n_{e}^{2}n_{i}}(\partial_{x_{1}}^{7}N_{e})^{2} +C_{1}(1+\epsilon^{2}|\!|\!|(N_{e}, \mathbf{U})|\!|\!|_{\epsilon}^{4})|\!|\!|(N_{e}, \mathbf{U})|\!|\!|_{\epsilon}^{2},
\end{split}
\end{equation*}
again thanks to Lemma \ref{L2} and \ref{L3}. For $I_{143}$, we have
\begin{equation*}
\begin{split}
I_{143}=&-\frac{1}{2}\frac{\epsilon^{8}H^{4}}{16}\frac{d}{dt}\int\frac{1}{n_{i}}(\partial_{x_{1}}^{6}\partial_{x_{2}}N_{e})^{2}
+\frac{1}{2}\frac{\epsilon^{8}H^{4}}{16}\int\partial_{t}(\frac{1}{n_{i}})(\partial_{x_{1}}^{6}\partial_{x_{2}}N_{e})^{2}\\
&-\frac{\epsilon^{8}H^{4}}{16}\int\partial_{x_{2}}(\frac{1}{n_{i}})\partial_{x_{1}}^{6}\partial_{x_{2}}N_{e}\partial_{t}\partial_{x_{1}}^{6}N_{e}
+\frac{\epsilon^{8}H^{4}}{16}\int\partial_{x_{1}}(\frac{1}{n_{i}})\partial_{x_{1}}^{5}\partial_{x_{2}}^{2}N_{e}\partial_{t}\partial_{x_{1}}^{6}N_{e}\\
&-\frac{\epsilon^{8}H^{4}}{16}\int\frac{1}{n_{e}n_{i}}\partial_{x_{1}}^{5}\partial_{x_{2}}^{2}N_{e}
\left([\partial_{x_{1}}^{3},\partial_{t}\frac{1}{n_{e}}]\partial_{x_{1}}^{4}N_{e}
+[\partial_{x_{1}}^{3},\frac{1}{n_{e}}]\partial_{t}\partial_{x_{1}}^{4}N_{e}\right),
\end{split}
\end{equation*}
and hence
\begin{equation*}
\begin{split}
I_{143}\leq-\frac{1}{2}\frac{\epsilon^{8}H^{4}}{16}\frac{d}{dt}\int\frac{1}{n_{i}}(\partial_{x_{1}}^{6}\partial_{x_{2}}N_{e})^{2}
+C_{1}(1+\epsilon^{2}|\!|\!|(N_{e},\mathbf{U})|\!|\!|_{\epsilon}^{4})|\!|\!|(N_{e},\mathbf{U})|\!|\!|_{\epsilon}^{2}.
\end{split}
\end{equation*}
Thus, we have
\begin{equation}
\begin{split}\label{g}
I_{14}=&-\frac{\epsilon^{6}}{2}\frac{H^{2}}{4}\frac{d}{dt}\int\frac{1}{n_{i}}(\partial_{x_{1}}^{6}N_{e})^{2}
-\frac{\epsilon^{7}}{2}\frac{H^{4}}{16}\frac{d}{dt}\int\frac{1}{n_{e}^{2}n_{i}}\big((\partial_{x_{1}}^{7}N_{e})^{2}
+\epsilon(\partial_{x_{1}}^{6}\partial_{x_{2}}N_{e})^{2}\big)\\
&+C_{1}(1+\epsilon^{2}|\!|\!|(N_{e},\mathbf{U})|\!|\!|_{\epsilon}^{2})|\!|\!|(N_{e},\mathbf{U})|\!|\!|_{\epsilon}^{2}.
\end{split}
\end{equation}
For $I_{15}$, we can divide
\begin{equation*}
\begin{split}
I_{15}=&\frac{\epsilon^{7}H^{2}}{2}\int\left(\frac{n_{e}}{n_{i}}\partial_{x_{1}}^{5}N_{e}-\frac{\epsilon H^{2}}{4}\frac{\partial_{x_{1}}^{7}N_{e}
+\epsilon\partial_{x_{1}}^{5}\partial_{x_{2}}^{2}N_{e}}{n_{e}n_{i}}\right)\partial_{t}\partial_{x_{1}}^{3}(\frac{\partial_{x_{1}}^{2}\partial_{x_{2}}^{2}N_{e}}{n_{e}})\\
=&:\sum_{i=1}^{3}I_{15i}.
\end{split}
\end{equation*}
By integration by parts, we have
\begin{equation*}
\begin{split}
I_{151}=&-\frac{1}{2}\frac{\epsilon^{7}H^{2}}{2}\frac{d}{dt}\int\frac{1}{n_{i}}(\partial_{x_{1}}^{5}\partial_{x_{2}}N_{e})^{2}
+\frac{1}{2}\frac{\epsilon^{7}H^{2}}{2}\int\partial_{t}(\frac{1}{n_{i}})(\partial_{x_{1}}^{5}\partial_{x_{2}}N_{e})^{2}\\
&-\frac{\epsilon^{7}H^{2}}{2}\int\partial_{x_{2}}(\frac{1}{n_{i}})\partial_{x_{1}}^{5}N_{e}\partial_{t}\partial_{x_{1}}^{5}\partial_{x_{2}}N_{e}\\
&+\frac{\epsilon^{7}H^{2}}{2}\int\frac{n_{e}}{n_{i}}\partial_{x_{1}}^{5}N_{e}
\left([\partial_{x_{1}}^{3},\partial_{t}\frac{1}{n_{e}}]\partial_{x_{1}}^{2}\partial_{x_{2}}^{2}N_{e}
+[\partial_{x_{1}}^{3},\frac{1}{n_{e}}] \partial_{t}\partial_{x_{1}}^{2}\partial_{x_{2}}^{2}N_{e}\right),
\end{split}
\end{equation*}
which can be bounded similarly to $I_{14}$,
\begin{equation*}
\begin{split}
I_{151}\leq-\frac{1}{2}\frac{\epsilon^{7}H^{2}}{4}\frac{d}{dt}\int\frac{1}{n_{i}}(\partial_{x_{1}}^{5}\partial_{x_{2}}N_{e})^{2}
+C_{1}\big(1+\epsilon^{2}|\!|\!|(N_{e},\mathbf{U})|\!|\!|_{\epsilon}^{4}\big)|\!|\!|(N_{e},\mathbf{U})|\!|\!|_{\epsilon}^{2}.
\end{split}
\end{equation*}
By integration by parts, we have
\begin{equation*}
\begin{split}
I_{152}=&-\frac{1}{2}\frac{\epsilon^{8}H^{4}}{8}\frac{d}{dt}\int\frac{1}{n_{i}}(\partial_{x_{1}}^{6}\partial_{x_{2}}N_{e})^{2}
+\frac{\epsilon^{8}H^{4}}{8}\int\partial_{x_{1}x_{2}}(\frac{1}{n_{i}})\partial_{x_{1}}^{6}N_{e}\partial_{t}\partial_{x_{1}}^{5}\partial_{x_{2}}N_{e}\\
&-\frac{\epsilon^{8}H^{4}}{8}\int\partial_{x_{1}}^{2}(\frac{1}{n_{i}})\partial_{x_{1}}^{6}N_{e}\partial_{t}\partial_{x_{1}}^{4}\partial_{x_{2}}^{2}N_{e}
+\frac{1}{2}\frac{\epsilon^{8}H^{4}}{8}\int\partial_{t}(\frac{1}{n_{i}})(\partial_{x_{1}}^{6}\partial_{x_{2}}N_{e})^{2}\\
&+\frac{\epsilon^{8}H^{4}}{8}\int\partial_{x_{2}}(\frac{1}{n_{i}})\partial_{x_{1}}^{7}N_{e}\partial_{t}\partial_{x_{1}}^{5}\partial_{x_{2}}N_{e}
-\frac{\epsilon^{8}H^{4}}{8}\int\partial_{x_{1}}(\frac{1}{n_{i}})\partial_{x_{1}}^{7}N_{e}\partial_{t}\partial_{x_{1}}^{4}\partial_{x_{2}}^{2}N_{e}\\
&-\frac{\epsilon^{8}H^{4}}{8}\int\frac{1}{n_{e}n_{i}}\partial_{x_{1}}^{7}N_{e}
\left([\partial_{x_{1}}^{3},\partial_{t}\frac{1}{n_{e}}]\partial_{x_{1}}^{2}\partial_{x_{2}}^{2}N_{e}
+[\partial_{x_{1}}^{3},\frac{1}{n_{e}}]\partial_{t}\partial_{x_{1}}^{2}\partial_{x_{2}}^{2}N_{e}\right)\\
=&:\sum_{i=1}^{7}I_{152}^{(i)}.
\end{split}
\end{equation*}
Noting
\begin{equation*}
\begin{split}
&\left|\partial_{x_{1}x_{2}}(\frac{1}{n_{i}})\right|
\leq C\left(\epsilon+\epsilon^{5}(|\partial_{x_{1}}N_{i}|+|\partial_{x_{2}}N_{i}|+|\partial_{x_{1}x_{2}}N_{i}|)
+\epsilon^{10}|\partial_{x_{1}}N_{i}||\partial_{x_{2}}N_{i}|\right),
\end{split}
\end{equation*}
and
\begin{equation*}
\begin{split}
\left|\partial_{x_{1}}^{2}(\frac{1}{n_{i}})\right|
\leq C\left(\epsilon+\epsilon^{5}(|\partial_{x_{1}}N_{i}|+|\partial_{x_{1}}^{2}N_{i}|)
+\epsilon^{10}(|\partial_{x_{1}}N_{i}|)^{2}\right),
\end{split}
\end{equation*}
the term $I_{152}^{(2)}$ and $I_{152}^{(3)}$ can be bounded by
\begin{equation*}
\begin{split}
I_{152}^{(2)}\leq &C\big(1+\epsilon^{7}(\|\partial_{x_{1}}N_{i}\|_{L^{\infty}}^{2}+\epsilon\|\partial_{x_{2}}N_{i}\|_{L^{\infty}}^{2})\big)
(\epsilon^{6}\|\partial_{x_{1}}^{6}N_{e}\|^{2}+\epsilon^{10}\|\partial_{t}\partial_{x_{1}}^{5}\partial_{x_{2}}N_{e}\|^{2})\\
&+C\big(1+\epsilon^{12}\|\epsilon\partial_{t}\partial_{x_{1}}^{5}\partial_{x_{2}}N_{e}\|^{2}\big)
\big(\epsilon^{5}\|\partial_{x_{1}x_{2}}N_{i}\|_{L^{3}}^{2}+\epsilon^{8}\|\partial_{x_{1}}^{6}N_{e}\|_{L^{6}}^{2}\big)\\
\leq &C\big(1+\epsilon^{7}(\|\partial_{x_{1}}N_{i}\|_{H^{2}}^{2}+\epsilon\|\partial_{x_{2}}N_{i}\|_{H^{2}}^{2})\big)
\big(\epsilon^{6}\|\partial_{x_{1}}^{6}N_{e}\|^{2}+\epsilon^{10}\|\partial_{t}\partial_{x_{1}}^{5}\partial_{x_{2}}N_{e}\|^{2}\big)\\
&+C\big(1+\epsilon^{12}\|\epsilon\partial_{t}\partial_{x_{1}}^{5}\partial_{x_{2}}N_{e}\|^{2}\big)
\big(\epsilon^{5}\|\partial_{x_{1}x_{2}}N_{i}\|_{H^{1}}^{2}+\epsilon^{8}\|\partial_{x_{1}}^{6}N_{e}\|_{H^{1}}^{2}\big)\\
\leq &C_{1}(1+\epsilon^{2}|\!|\!|(N_{e},\mathbf{U})|\!|\!|_{\epsilon}^{2})|\!|\!|(N_{e},\mathbf{U})|\!|\!|_{\epsilon}^{2},
\end{split}
\end{equation*}
and
\begin{equation*}
\begin{split}
I_{152}^{(3)} \leq C_{1}(1+\epsilon^{2}|\!|\!|(N_{e}, \mathbf{U})|\!|\!|_{\epsilon}^{2})|\!|\!|(N_{e}, \mathbf{U})|\!|\!|_{\epsilon}^{2},
\end{split}
\end{equation*}
respectively, thanks to Lemma \ref{L2} and \ref{L3} and the Sobolev embedding inequalities. The other terms in $I_{152}$ can be similarly bounded by
\begin{equation*}
\begin{split}
I_{152}^{(4\sim7)}
\leq C_{1}(1+\epsilon^{2}|\!|\!|(N_{e},\mathbf{U})|\!|\!|_{\epsilon}^{4})|\!|\!|(N_{e},\mathbf{U})|\!|\!|_{\epsilon}^{2}.
\end{split}
\end{equation*}
Therefore, we have
\begin{equation*}
\begin{split}
I_{152}\leq-\frac{1}{2}\frac{\epsilon^{8}H^{4}}{8}\frac{d}{dt}\int\frac{1}{n_{i}}(\partial_{x_{1}}^{6}\partial_{x_{2}}N_{e})^{2}
+C_{1}(1+\epsilon^{2}|\!|\!|(N_{e},\mathbf{U})|\!|\!|_{\epsilon}^{4})|\!|\!|(N_{e},\mathbf{U})|\!|\!|_{\epsilon}^{2}.
\end{split}
\end{equation*}
The $I_{153}$ term can be bounded by
\begin{equation*}
\begin{split}
I_{153}=&-\frac{\epsilon^{9}}{2}\frac{H^{4}}{8}\frac{d}{dt}\int\frac{1}{n_{e}^{2}n_{i}}(\partial_{x_{1}}^{5}\partial_{x_{2}}^{2}N_{e})^{2}
+\frac{\epsilon^{9}}{2}\frac{H^{4}}{8}\int\partial_{t}\big(\frac{1}{n_{e}^{2}n_{i}}\big)(\partial_{x_{1}}^{5}\partial_{x_{2}}^{2}N_{e})^{2}\\
&-\frac{\epsilon^{9}H^{4}}{8}\int\frac{1}{n_{e}n_{i}}\partial_{x_{1}}^{5}\partial_{x_{2}}^{2}N_{e}
\big([\partial_{x_{1}}^{3},\partial_{t}\frac{1}{n_{e}}]\partial_{x_{1}}^{2}\partial_{x_{2}}^{2}N_{e}
+[\partial_{x_{1}}^{3},\frac{1}{n_{e}}]\partial_{t}\partial_{x_{1}}^{2}\partial_{x_{2}}^{2}N_{e}\big)\\
\leq& -\frac{\epsilon^{9}}{2}\frac{H^{4}}{16}\frac{d}{dt}\int\frac{1}{n_{e}^{2}n_{i}}(\partial_{x_{1}}^{5}\partial_{x_{2}}^{2}N_{e})^{2}
+C_{1}(1+\epsilon^{2}|\!|\!|(N_{e},\mathbf{U})|\!|\!|_{\epsilon}^{4})|\!|\!|(N_{e},\mathbf{U})|\!|\!|_{\epsilon}^{2},
\end{split}
\end{equation*}
and finally yields the estimates
\begin{equation}
\begin{split}\label{h}
I_{15}\leq&-\frac{\epsilon^{7}}{2}\frac{H^{2}}{2}\frac{d}{dt}\int\frac{1}{n_{i}}(\partial_{x_{1}}^{5}\partial_{x_{2}}N_{e})^{2}
-\frac{\epsilon^{8}}{2}\frac{H^{4}}{8}\frac{d}{dt}\int\frac{1}{n_{i}}\big((\partial_{x_{1}}^{6}\partial_{x_{2}}N_{e})^{2}
+\epsilon(\partial_{x_{1}}^{5}\partial_{x_{2}}^{2}N_{e})^{2}\big)\\
&+C_{1}(1+\epsilon^{2}|\!|\!|(N_{e},\mathbf{U})|\!|\!|_{\epsilon}^{4})|\!|\!|(N_{e},\mathbf{U})|\!|\!|_{\epsilon}^{2}.
\end{split}
\end{equation}
From\eqref{orin}, the term $I_{16}$ can be rewritten as
\begin{equation*}
\begin{split}
I_{16}=&\frac{\epsilon^{8}H^{2}}{4}\int\left(\frac{n_{e}}{n_{i}}\partial_{x_{1}}^{5}N_{e}-\frac{\epsilon H^{2}}{4}\frac{\partial_{x_{1}}^{7}N_{e}
+\epsilon\partial_{x_{1}}^{5}\partial_{x_{2}}^{2}N_{e}}{n_{e}n_{i}}\right) \partial_{t}\partial_{x_{1}}^{3} \left(\frac{\partial_{x_{2}}^{4}N_{e}}{n_{e}}\right) =:\sum_{i=1}^{3}I_{16i}.
\end{split}
\end{equation*}
By integration by parts, the first term is divided into
\begin{equation*}
\begin{split}
I_{161}=&-\frac{\epsilon^{8}}{2}\frac{H^{2}}{4}\frac{d}{dt}\int\frac{1}{n_{i}}(\partial_{x_{1}}^{4}\partial_{x_{2}}^{2}N_{e})^{2}
+\frac{\epsilon^{8}H^{2}}{8}\int\partial_{t}(\frac{1}{n_{i}})(\partial_{x_{1}}^{4}\partial_{x_{2}}^{2}N_{e})^{2}\\
&-\frac{\epsilon^{8}H^{2}}{2}\int\partial_{x_{2}}(\frac{1}{n_{i}})
\partial_{x_{1}}^{4}\partial_{x_{2}}N_{e}\partial_{t}\partial_{x_{1}}^{4}\partial_{x_{2}}^{2}N_{e}
-\frac{\epsilon^{8}H^{2}}{4}\int\partial_{x_{2}}^{2}(\frac{1}{n_{i}})
\partial_{x_{1}}^{4}N_{e}\partial_{t}\partial_{x_{1}}^{4}\partial_{x_{2}}^{2}N_{e}\\
&+\frac{\epsilon^{8}H^{2}}{4}\int\partial_{x_{1}}^{2}(\frac{1}{n_{i}})
\partial_{x_{1}}^{4}N_{e}\partial_{t}\partial_{x_{1}}^{2}\partial_{x_{2}}^{4}N_{e}
+\frac{\epsilon^{8}H^{2}}{4}\int\partial_{x_{1}}(\frac{1}{n_{i}})
\partial_{x_{1}}^{5}N_{e}\partial_{t}\partial_{x_{1}}^{2}\partial_{x_{2}}^{4}N_{e}\\
&+\frac{\epsilon^{8}H^{2}}{4}\int\frac{n_{e}}{n_{i}}\partial_{x_{1}}^{5}N_{e}
\left([\partial_{x_{1}}^{3},\partial_{t}\frac{1}{n_{e}}]\partial_{x_{2}}^{4}N_{e}
+[\partial_{x_{1}}^{3},\frac{1}{n_{e}}] \partial_{t}\partial_{x_{2}}^{4}N_{e}\right),
\end{split}
\end{equation*}
and can be bounded by
\begin{equation*}
\begin{split}
I_{161}\leq-\frac{\epsilon^{8}}{2}\frac{H^{2}}{4}\frac{d}{dt} \int\frac{1}{n_{i}}(\partial_{x_{1}}^{4}\partial_{x_{2}}^{2}N_{e})^{2} +C_{1}(1+\epsilon^{2}|\!|\!|(N_{e}, \mathbf{U})|\!|\!|_{\epsilon}^{4})|\!|\!|(N_{e}, \mathbf{U})|\!|\!|_{\epsilon}^{2},
\end{split}
\end{equation*}
again thanks to Lemma \ref{L2} and \ref{L3}. For $I_{162}$, we have by integration by parts that
\begin{equation*}
\begin{split}
I_{162}=&-\frac{\epsilon^{9}}{2}\frac{H^{4}}{16}\frac{d}{dt}\int\frac{1}{n_{e}^{2}n_{i}}(\partial_{x_{1}}^{5}\partial_{x_{2}}^{2}N_{e})^{2}
+\frac{\epsilon^{9}H^{4}}{16}\int\partial_{t}\big(\frac{1}{n_{e}^{2}n_{i}}\big)(\partial_{x_{1}}^{5}\partial_{x_{2}}^{2}N_{e})^{2}\\
&+\frac{\epsilon^{9}H^{4}}{8}\int\partial_{x_{2}}(\frac{1}{n_{e}^{2}n_{i}})
\partial_{x_{1}}^{5}\partial_{x_{2}}^{2}N_{e}\partial_{t}\partial_{x_{1}}^{5}\partial_{x_{2}}N_{e}
+\frac{3\epsilon^{9}H^{4}}{16}\int\partial_{x_{2}}^{2}\big(\frac{1}{n_{e}^{2}n_{i}}\big)
\partial_{x_{1}}^{5}\partial_{x_{2}}N_{e}\partial_{t}\partial_{x_{1}}^{5}\partial_{x_{2}}N_{e}\\
&+\frac{\epsilon^{9}H^{4}}{16}\int\partial_{x_{2}}^{3}\big(\frac{1}{n_{e}^{2}n_{i}}\big)
\partial_{x_{1}}^{5}N_{e}\partial_{t}\partial_{x_{1}}^{5}\partial_{x_{2}}N_{e}
-\frac{\epsilon^{9}H^{4}}{8}\int\partial_{x_{1}}\big(\frac{1}{n_{e}^{2}n_{i}}\big)
\partial_{x_{1}}^{7}N_{e}\partial_{t}\partial_{x_{1}}^{2}\partial_{x_{2}}^{4}N_{e}\\
&-\frac{3\epsilon^{9}H^{4}}{16}\int\partial_{x_{1}}^{2}\big(\frac{1}{n_{e}^{2}n_{i}}\big)
\partial_{x_{1}}^{6}N_{e}\partial_{t}\partial_{x_{1}}^{2}\partial_{x_{2}}^{4}N_{e}
-\frac{\epsilon^{9}H^{4}}{16}\int\partial_{x_{1}}^{3}\big(\frac{1}{n_{e}^{2}n_{i}}\big)
\partial_{x_{1}}^{5}N_{e}\partial_{t}\partial_{x_{1}}^{2}\partial_{x_{2}}^{4}N_{e}\\
&-\frac{\epsilon^{9}H^{4}}{16}\int\frac{1}{n_{e}n_{i}}\partial_{x_{1}}^{7}N_{e}
\big([\partial_{x_{1}}^{3},\partial_{t}\frac{1}{n_{e}}]\partial_{x_{2}}^{4}N_{e}
+[\partial_{x_{1}}^{3},\frac{1}{n_{e}}]\partial_{t}\partial_{x_{2}}^{4}N_{e}\big).
\end{split}
\end{equation*}
By Lemma \ref{L2} and \ref{L3} and various Sobolev embeddings $H^2\hookrightarrow L^{\infty}$, $H^1\hookrightarrow L^{3}$ and $H^1\hookrightarrow L^{6}$, we have
\begin{equation*}
\begin{split}
I_{162}\leq-\frac{\epsilon^{9}}{2}\frac{H^{4}}{16}\frac{d}{dt}\int\frac{1}{n_{e}^{2}n_{i}}(\partial_{x_{1}}^{5}\partial_{x_{2}}^{2}N_{e})^{2}
+C_{1}(1+\epsilon^{2}|\!|\!|(N_{e},\mathbf{U})|\!|\!|_{\epsilon}^{4})|\!|\!|(N_{e},\mathbf{U})|\!|\!|_{\epsilon}^{2}.
\end{split}
\end{equation*}
For $I_{163}$, we have
\begin{equation*}
\begin{split}
I_{163}=&-\frac{\epsilon^{10}}{2}\frac{H^{4}}{16}\frac{d}{dt}\int\frac{1}{n_{e}^{2}n_{i}}(\partial_{x_{1}}^{4}\partial_{x_{2}}^{3}N_{e})^{2}
+\frac{\epsilon^{10}}{2}\frac{H^{4}}{16}\int\partial_{t}\big(\frac{1}{n_{e}^{2}n_{i}}\big)(\partial_{x_{1}}^{4}\partial_{x_{2}}^{3}N_{e})^{2}\\
&-\frac{\epsilon^{10}H^{4}}{16}\int\partial_{x_{1}}\big(\frac{1}{n_{e}^{2}n_{i}}\big)
\partial_{x_{1}}^{4}\partial_{x_{2}}^{3}N_{e}\partial_{t}\partial_{x_{1}}^{3}\partial_{x_{2}}^{3}N_{e}
+\frac{\epsilon^{10}H^{4}}{16}\int\partial_{x_{2}}\big(\frac{1}{n_{e}^{2}n_{i}}\big)
\partial_{x_{1}}^{5}\partial_{x_{2}}^{2}N_{e}\partial_{t}\partial_{x_{1}}^{3}\partial_{x_{2}}^{3}N_{e}\\
&-\frac{\epsilon^{10}H^{4}}{16}\int\frac{1}{n_{e}n_{i}}\partial_{x_{1}}^{5}\partial_{x_{2}}^{2}N_{e}
\left([\partial_{x_{1}}^{3},\partial_{t}\frac{1}{n_{e}}]\partial_{x_{2}}^{4}N_{e}
+[\partial_{x_{1}}^{3},\frac{1}{n_{e}}]\partial_{t}\partial_{x_{2}}^{4}N_{e}\right),
\end{split}
\end{equation*}
and hence
\begin{equation*}
\begin{split}
I_{163}\leq-\frac{\epsilon^{10}}{2}\frac{H^{4}}{16}\frac{d}{dt}\int\frac{1}{n_{e}^{2}n_{i}}(\partial_{x_{1}}^{4}\partial_{x_{2}}^{3}N_{e})^{2}
+C_{1}(1+\epsilon^{2}|\!|\!|(N_{e},\mathbf{U})|\!|\!|_{\epsilon}^{4})|\!|\!|(N_{e},\mathbf{U})|\!|\!|_{\epsilon}^{2}.
\end{split}
\end{equation*}
These three inequalities yield the estimate for $I_{16}$ that
\begin{equation}
\begin{split}\label{i}
I_{16}\leq&-\frac{\epsilon^{8}}{2}\frac{H^{2}}{4}\frac{d}{dt} \int\frac{1}{n_{i}}\left(\partial_{x_{1}}^{4}\partial_{x_{2}}^{2} N_{e}\right)^{2}\\
& -\frac{\epsilon^{9}}{2}\frac{H^{4}}{16}\frac{d}{dt} \int\frac{1}{n_{e}^{2}n_{i}}\left(\left(\partial_{x_{1}}^{5} \partial_{x_{2}}^{2}N_{e}\right)^{2} +\epsilon(\partial_{x_{1}}^{4}\partial_{x_{2}}^{3}N_{e})^{2}\right)\\
& +C_{1}\left(1+\epsilon^{2}|\!|\!|(N_{e}, \mathbf{U})|\!|\!|_{\epsilon}^{4}\right)|\!|\!|(N_{e}, \mathbf{U})|\!|\!|_{\epsilon}^{2}.
\end{split}
\end{equation}
Finally, using Sobolev inequalities and Lemma \ref{L2} and \ref{L3}, we have for $I_{1}$ that
\begin{equation}
\begin{split}\label{j}
I_{17\sim111}\leq C_{1}(1+\epsilon^{2}|\!|\!|(N_{e},\mathbf{U})|\!|\!|_{\epsilon}^{6})|\!|\!|(N_{e},\mathbf{U})|\!|\!|_{\epsilon}^{2}.
\end{split}
\end{equation}
Summing up all these inequalities from \eqref{a} to \eqref{j}, we have
\begin{equation*}
\begin{split}
I_{1}
\leq &-\frac{\epsilon^{4}}{2}\frac{d}{dt}\int\frac{n_{e}}{n_{i}}(\partial_{x_{1}}^{4}N_{e})^{2}
-\frac{\epsilon^{5}}{2}\frac{d}{dt}\int\big(\frac{n_{e}^{2}}{n_{i}}
+\frac{H^{2}}{4}\frac{1}{n_{e}n_{i}}\big)\big((\partial_{x_{1}}^{5}N_{e})^{2}+(\partial_{x_{1}}^{4}\partial_{x_{2}}N_{e})^{2}\big)\\
&-\frac{\epsilon^{6}}{2}\frac{H^{2}}{4}\frac{d}{dt}\int\frac{1}{n_{i}}\big(2\partial_{x_{1}}^{6}N_{e})^{2}
+3\epsilon(\partial_{x_{1}}^{5}\partial_{x_{2}}N_{e})^{2}+2\epsilon^{2}(\partial_{x_{1}}^{4}\partial_{x_{2}}^{2}N_{e})^{2}\big)\\
&-\frac{\epsilon^{7}}{2}\frac{H^{4}}{16}\frac{d}{dt}\int\frac{1}{n_{e}^{2}n_{i}}\big((\partial_{x_{1}}^{7}N_{e})^{2}
+3\epsilon(\partial_{x_{1}}^{6}\partial_{x_{2}}N_{e})^{2}+3\epsilon^{2}(\partial_{x_{1}}^{5}\partial_{x_{2}}^{2}N_{e})^{2}
+\epsilon^{3}(\partial_{x_{1}}^{4}\partial_{x_{2}}^{3}N_{e})^{2}\big)\\
&+C_{1}(1+\epsilon^{2}|\!|\!|(N_{e}, \mathbf{U})|\!|\!|_{\epsilon}^{6})(1+|\!|\!|(N_{e}, \mathbf{U})|\!|\!|_{\epsilon}^{2}),
\end{split}
\end{equation*}
completing the proof of Lemma \ref{P3}.
\end{proof}

\subsection{The estimates of the other fourth order for $\mathbf{U}$}\label{2.5}
\begin{lemma}\label{P4}
 Let $(N_{i},N_{e},\mathbf{U})$ be a solution to \eqref{rem}. Then
\begin{equation}\label{equ15}
\begin{split}
\frac{\epsilon^{5}}{2}\frac{d}{dt}&\big(\|\partial_{x_{1}}^{3}\partial_{x_{2}}U_{1}\|^{2}+\|\partial_{x_{1}}^{3}\partial_{x_{2}}U_{2}\|^{2}\big)\\
&+\frac{\epsilon^{5}}{2}\frac{d}{dt}\int\frac{n_{e}}{n_{i}}\big(\partial_{x_{1}}^{3}\partial_{x_{2}}N_{e}\big)^{2}
+\frac{\epsilon^{6}}{2}\frac{d}{dt}\int\big(\frac{n_{e}^{2}}{n_{i}}+\frac{^{H^{2}}}{4}\frac{1}{n_{e}n_{i}}\big)
\left((\partial_{x_{1}}^{4}\partial_{x_{2}}N_{e})^{2}+\epsilon(\partial_{x_{1}}^{3}\partial_{x_{2}}^{2}N_{e})^{2}\right)\\
&+\frac{\epsilon^{7}}{2}\frac{H^{2}}{4}\int\frac{1}{n_{i}}\left((2\partial_{x_{1}}^{5}\partial_{x_{2}}N_{e})^{2}
+3\epsilon(\partial_{x_{1}}^{4}\partial_{x_{2}}^{2}N_{e})^{2}
+2\epsilon^{2}(\partial_{x_{1}}^{3}\partial_{x_{2}}^{3}N_{e})^{2}\right)\\
&+\frac{\epsilon^{8}}{2}\frac{H^{4}}{16}\int\frac{1}{n_{e}^{2}n_{i}}\left((\partial_{x_{1}}^{6}\partial_{x_{2}}N_{e})^{2}
+3\epsilon(\partial_{x_{1}}^{5}\partial_{x_{2}}^{2}N_{e})^{2}
+3\epsilon^{2}(\partial_{x_{1}}^{4}\partial_{x_{2}}^{3}N_{e})^{2}+\epsilon^{3}(\partial_{x_{1}}^{3}\partial_{x_{2}}^{4}N_{e})^{2}\right)\\
\leq & C_{1}(1+\epsilon^{2}|\!|\!|(N_{e},\mathbf{U})|\!|\!|_{\epsilon}^{6})(1+|\!|\!|(N_{e},\mathbf{U})|\!|\!|_{\epsilon}^{2}).
\end{split}
\end{equation}
\end{lemma}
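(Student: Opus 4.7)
The plan is to mimic the three-step argument used for Lemma \ref{P3}, but with the differential operator $\partial_{x_{1}}^{3}\partial_{x_{2}}$ replacing $\partial_{x_{1}}^{4}$ throughout. Since the triple norm \eqref{|||} weights derivatives by $\epsilon^{\alpha+2\beta}$, the multi-index $(\alpha,\beta)=(3,1)$ produces the prefactor $\epsilon^{5}$ in front of $\|\partial_{x_{1}}^{3}\partial_{x_{2}}\mathbf{U}\|^{2}$, which explains the overall scaling on the left-hand side of \eqref{equ15}.

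First I would apply $\partial_{x_{1}}^{3}\partial_{x_{2}}$ to \eqref{rem-2} and to \eqref{rem-3}, then take the $L^{2}$ inner products with $\epsilon^{5}\partial_{x_{1}}^{3}\partial_{x_{2}}U_{1}$ and $\epsilon^{5}\partial_{x_{1}}^{3}\partial_{x_{2}}U_{2}$ respectively, and sum the two identities. All commutators of $\partial_{x_{1}}^{3}\partial_{x_{2}}$ with the smooth coefficients $V-u_{i_{1}}, u_{i_{2}}, n_{e}, 1/n_{e}$, as well as the inhomogeneous terms involving $\mathcal{R}_{2}^{(j)}, \mathcal{R}_{3}^{(j)}, A_{i}, B_{i}$, are controlled exactly as the terms $F_{2}$--$F_{18}$ in the proof of Lemma \ref{P3}, via Lemma \ref{Lem2}, the Sobolev embedding $H^{2}\hookrightarrow L^{\infty}$, the pointwise bounds \eqref{one order}--\eqref{two orders}, and the a priori assumption \eqref{priori}; they contribute at most $C_{1}(1+\epsilon^{2}|\!|\!|(N_{e},\mathbf{U})|\!|\!|_{\epsilon}^{6})(1+|\!|\!|(N_{e},\mathbf{U})|\!|\!|_{\epsilon}^{2})$.

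The only delicate term is the Bohm-potential/pressure coupling, i.e. the analogue of $F_{1}$, namely
\[
\widetilde{F}_{1}=-\int\left(\epsilon^{4}n_{e}\partial_{x_{1}}^{4}\partial_{x_{2}}N_{e}
-\frac{\epsilon^{5}H^{2}}{4}\frac{\partial_{x_{1}}^{6}\partial_{x_{2}}N_{e}+\epsilon\partial_{x_{1}}^{4}\partial_{x_{2}}^{3}N_{e}}{n_{e}}\right)
\bigl(\partial_{x_{1}}^{3}\partial_{x_{2}}U_{1}+\epsilon^{1/2}\partial_{x_{1}}^{2}\partial_{x_{2}}^{2}U_{2}\bigr).
\]
Here the combination $\partial_{x_{1}}^{3}\partial_{x_{2}}U_{1}+\epsilon^{1/2}\partial_{x_{1}}^{2}\partial_{x_{2}}^{2}U_{2}$ comes naturally from the divergence structure in \eqref{rem-1}, so I would differentiate \eqref{rem-1} by $\partial_{x_{1}}^{2}\partial_{x_{2}}$ and solve for this combination in terms of $(1/n_{i})\bigl(-\epsilon\partial_{t}\partial_{x_{1}}^{2}\partial_{x_{2}}N_{i}+\dots\bigr)$, exactly as in \eqref{z1}. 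Substituting this into $\widetilde{F}_{1}$, the terms containing the commutators and the advection pieces are bounded as in Step 1 of Lemma \ref{P3}, using Lemmas \ref{L1}, \ref{L2} and the commutator estimate. The surviving principal terms (the analogues of $I_{21}$ and $I_{1}$) are then treated as in Steps 2 and 3: for $I_{21}$ I substitute the expression for $\partial_{x_{1}}^{3}\partial_{x_{2}}N_{i}$ from the Poisson-type equation \eqref{rem-4} differentiated by $\partial_{x_{1}}^{3}\partial_{x_{2}}$ and integrate by parts, while for the $\partial_{t}\partial_{x_{1}}^{2}\partial_{x_{2}}N_{i}$ piece I insert the $\partial_{t}$-differentiated \eqref{rem-4} so that one spatial derivative of $N_{e}$ can be integrated by parts back onto $\partial_{x_{1}}^{4}\partial_{x_{2}}N_{e}$ or $\partial_{x_{1}}^{6}\partial_{x_{2}}N_{e}$, producing the time derivatives $\tfrac{d}{dt}\int\tfrac{n_{e}}{n_{i}}(\partial_{x_{1}}^{3}\partial_{x_{2}}N_{e})^{2}$, $\tfrac{d}{dt}\int\bigl(\tfrac{n_{e}^{2}}{n_{i}}+\tfrac{H^{2}}{4}\tfrac{1}{n_{e}n_{i}}\bigr)(\partial_{x_{1}}^{4}\partial_{x_{2}}N_{e})^{2}$, and the higher-order analogues displayed on the left of \eqref{equ15}. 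Lemmas \ref{L2} and \ref{L3} are invoked to absorb all time-derivative remainders by the triple norm.

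The main obstacle will be purely book-keeping: the mixed derivative $\partial_{x_{1}}^{3}\partial_{x_{2}}$ generates strictly more commutator terms than $\partial_{x_{1}}^{4}$ (each Leibniz expansion has the $\partial_{x_{2}}$ landing on either the coefficient or the argument), and one must check carefully that every such term is controlled by $|\!|\!|(N_{e},\mathbf{U})|\!|\!|_{\epsilon}^{2}$ given the anisotropic weights $\epsilon^{\alpha+2\beta}$. However, no new analytic input is needed beyond what was used for Lemma \ref{P3}; in particular, the highest-order contributions $\|\partial_{x_{1}}^{6}\partial_{x_{2}}N_{e}\|,\|\partial_{x_{1}}^{5}\partial_{x_{2}}^{2}N_{e}\|,\|\partial_{x_{1}}^{4}\partial_{x_{2}}^{3}N_{e}\|,\|\partial_{x_{1}}^{3}\partial_{x_{2}}^{4}N_{e}\|$ are exactly the order-seven pieces already included in the triple norm, so each of them is legitimately absorbed on the left-hand side of \eqref{equ15}. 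Combining all estimates yields \eqref{equ15}.
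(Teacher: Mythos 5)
Your proposal follows exactly the route the paper takes: the paper's proof of Lemma \ref{P4} is a one-line remark instructing the reader to repeat the argument of Lemma \ref{P3} with $\partial_{x_{1}}^{3}\partial_{x_{2}}$ in place of $\partial_{x_{1}}^{4}$ and the weight $\epsilon^{5}$ in place of $\epsilon^{4}$, which is precisely your plan, and your identification of the principal term $\widetilde{F}_{1}$, the divergence combination $\partial_{x_{1}}^{3}\partial_{x_{2}}U_{1}+\epsilon^{1/2}\partial_{x_{1}}^{2}\partial_{x_{2}}^{2}U_{2}$ obtained from $\partial_{x_{1}}^{2}\partial_{x_{2}}$ of \eqref{rem-1}, and the use of Lemmas \ref{L1}--\ref{L3} and \ref{Lem2} is consistent with the scalings in \eqref{equ15}. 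No discrepancy with the paper's (terse) argument.
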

\begin{proof}[Proof of Lemma \ref{P4}.] The proof of this lemma is similar to that of Lemma \ref{P3}. We take $\partial_{x_{1}}^{3}\partial_{x_{2}}$ of \eqref{rem-2} and \eqref{rem-3} respectively, then take inner product of $\epsilon^{5}\partial_{x_{1}}^{3}\partial_{x_{2}}U_{1}, \epsilon^{5}\partial_{x_{1}}^{3}\partial_{x_{2}}U_{2}$ and sum the results.
\end{proof}

\begin{lemma}\label{P5}
 Let $(N_{i},N_{e},\mathbf{U})$ be a solution to \eqref{rem}. Then
\begin{equation}\label{equ16}
\begin{split}
\frac{\epsilon^{6}}{2}\frac{d}{dt}&\big(\|\partial_{x_{1}}^{2}\partial_{x_{2}}^{2}U_{1}\|^{2}
+\|\partial_{x_{1}}^{2}\partial_{x_{2}}^{2}U_{2}\|^{2}\big)\\
&+\frac{\epsilon^{6}}{2}\frac{d}{dt}\int\frac{n_{e}}{n_{i}}(\partial_{x_{1}}^{2}\partial_{x_{2}}N_{e})^{2}
+\frac{\epsilon^{7}}{2}\frac{d}{dt}\int\big(\frac{n_{e}^{2}}{n_{i}}+\frac{^{H^{2}}}{4}\frac{1}{n_{e}n_{i}}\big)
\left((\partial_{x_{1}}^{4}\partial_{x_{2}}^{2}N_{e})^{2}+\epsilon(\partial_{x_{1}}^{2}\partial_{x_{2}}^{4}N_{e})^{2}\right)\\
&+\frac{\epsilon^{8}}{2}\frac{H^{2}}{4}\int\frac{1}{n_{i}}\left((2\partial_{x_{1}}^{4}\partial_{x_{2}}^{2}N_{e})^{2}
+3\epsilon(\partial_{x_{1}}^{3}\partial_{x_{2}}^{3}N_{e})^{2}
+2\epsilon^{2}(\partial_{x_{1}}^{2}\partial_{x_{2}}^{4}N_{e})^{2}\right)\\
&+\frac{\epsilon^{9}}{2}\frac{H^{4}}{16}\int\frac{1}{n_{e}^{2}n_{i}}\left((\partial_{x_{1}}^{5}\partial_{x_{2}}^{2}N_{e})^{2}
+3\epsilon(\partial_{x_{1}}^{4}\partial_{x_{2}}^{3}N_{e})^{2}
+3\epsilon^{2}(\partial_{x_{1}}^{3}\partial_{x_{2}}^{4}N_{e})^{2}+\epsilon^{3}(\partial_{x_{1}}^{2}\partial_{x_{2}}^{5}N_{e})^{2}\right)\\
\leq & C_{1}(1+\epsilon^{2}|\!|\!|(N_{e},\mathbf{U})|\!|\!|_{\epsilon}^{6})(1+|\!|\!|(N_{e},\mathbf{U})|\!|\!|_{\epsilon}^{2}).
\end{split}
\end{equation}
\end{lemma}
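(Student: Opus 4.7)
The plan is to mimic the strategy of Lemma \ref{P3} almost verbatim, but with the multi-index of differentiation changed from $\partial_{x_{1}}^{4}$ to $\partial_{x_{1}}^{2}\partial_{x_{2}}^{2}$ and with the overall weight adjusted from $\epsilon^{4}$ to $\epsilon^{6}$, so as to match $\epsilon^{\alpha+2\beta}$ for $\alpha=\beta=2$ in the norm \eqref{|||}. First, I apply $\partial_{x_{1}}^{2}\partial_{x_{2}}^{2}$ to \eqref{rem-2} and \eqref{rem-3}, take the $L^{2}$ inner product of the results with $\epsilon^{6}\partial_{x_{1}}^{2}\partial_{x_{2}}^{2}U_{1}$ and $\epsilon^{6}\partial_{x_{1}}^{2}\partial_{x_{2}}^{2}U_{2}$ respectively, and sum them. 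After integrating by parts and introducing commutator notation \eqref{commu}, this produces an identity exactly analogous to \eqref{equ14}, with a leading $\frac12\frac{d}{dt}\epsilon^{6}(\|\partial_{x_{1}}^{2}\partial_{x_{2}}^{2}U_{1}\|^{2}+\|\partial_{x_{1}}^{2}\partial_{x_{2}}^{2}U_{2}\|^{2})$ on the left and an analogue of the ``hard'' term
\[
F_{1}^{\ast}=-\int\Bigl(\epsilon^{5}n_{e}\partial_{x_{1}}^{3}\partial_{x_{2}}^{2}N_{e}-\tfrac{\epsilon^{6}H^{2}}{4}\tfrac{\partial_{x_{1}}^{5}\partial_{x_{2}}^{2}N_{e}+\epsilon\partial_{x_{1}}^{3}\partial_{x_{2}}^{4}N_{e}}{n_{e}}\Bigr)\bigl(\partial_{x_{1}}^{2}\partial_{x_{2}}^{2}U_{1}+\epsilon^{1/2}\partial_{x_{1}}\partial_{x_{2}}^{3}U_{2}\bigr)
\]
that must be analyzed carefully, plus a long list of commutator remainders analogous to $F_{2},\ldots,F_{18}$.

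For the commutator remainders, the plan is routine: each such term contains either (i) a product of $n_{e}$, $n_{i}$, $\mathbf{u_{i}}$ or $1/n_{e}$ derivatives with at most four $\partial_{x_{1}}$ and two $\partial_{x_{2}}$ acting on $N_{e}$, $U_{1}$, $U_{2}$, or (ii) an inhomogeneous source $\mathcal R_{i}^{j}$. Applying the commutator estimate of Lemma \ref{Lem2}, the derivative bounds \eqref{one order}--\eqref{two orders} (and their $\partial_{x_{2}}$-analogues), the Sobolev embedding $H^{2}\hookrightarrow L^{\infty}$, Lemma \ref{Lem1}, and the a priori bound \eqref{priori}, each such term is bounded by $C_{1}(1+\epsilon^{2}|\!|\!|(N_{e},\mathbf U)|\!|\!|_{\epsilon}^{6})(1+|\!|\!|(N_{e},\mathbf U)|\!|\!|_{\epsilon}^{2})$, exactly as for $F_{2}\sim F_{18}$.

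The core of the argument is the treatment of $F_{1}^{\ast}$. Using \eqref{rem-1} with $\partial_{x_{1}}\partial_{x_{2}}^{2}$ applied to it, I substitute
\[
\partial_{x_{1}}^{2}\partial_{x_{2}}^{2}U_{1}+\epsilon^{1/2}\partial_{x_{1}}\partial_{x_{2}}^{3}U_{2}=\frac{1}{n_{i}}\Bigl(-\epsilon\partial_{t}\partial_{x_{1}}\partial_{x_{2}}^{2}N_{i}+\partial_{x_{1}}\partial_{x_{2}}^{2}((V-u_{i1})\partial_{x_{1}}N_{i})-\cdots\Bigr),
\]
which is the direct analogue of \eqref{z1}. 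This splits $F_{1}^{\ast}$ into eleven pieces; the lower-order pieces (involving commutators of $n_{i}$ with $U_{1},U_{2}$, the profile-dependent terms, and $\mathcal R_{1}$) are absorbed as before. The leading piece containing $(V-u_{i1})\partial_{x_{1}}N_{i}$ is, via \eqref{rem-4} differentiated with $\partial_{x_{1}}^{2}\partial_{x_{2}}^{2}$, rewritten as an expression in $\partial_{x_{1}}^{\alpha}\partial_{x_{2}}^{\beta}N_{e}$ with $\alpha+\beta\le 7$, and combined with the cubic coefficients $n_{e}^{2}/n_{i}$, $H^{2}/(4n_{e}n_{i})$, $H^{2}/(4n_{i})$, $H^{4}/(16 n_{e}^{2}n_{i})$ to form, via integration by parts, the five $\frac{d}{dt}$ integrals on the left-hand side of \eqref{equ16}. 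The piece containing $\partial_{t}\partial_{x_{1}}\partial_{x_{2}}^{2}N_{i}$ is integrated by parts in $t$ to generate the remaining $\frac{d}{dt}\int\frac{n_{e}}{n_{i}}(\partial_{x_{1}}^{2}\partial_{x_{2}}^{2}N_{e})^{2}$ term, using the identity $-\int f\,g\,\partial_{t}g=-\frac12\frac{d}{dt}\int f g^{2}+\frac12\int(\partial_{t}f)g^{2}$.

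The main obstacle is the bookkeeping rather than any single estimate: the anisotropic weight $\alpha+2\beta$ must be tracked through every integration by parts and every commutator (each $\partial_{x_{1}}$ must cost one power of $\epsilon$ and each $\partial_{x_{2}}$ must cost two), and the top-order $\partial_{t}$ derivatives of $N_{e}$ and $N_{i}$ that appear after integration by parts in $t$ must be controlled by Lemma \ref{L1}, Lemma \ref{L2} and Lemma \ref{L3} to avoid a loss of regularity. Once the correspondence of $\epsilon$-powers in $F_{1}^{\ast}$ with the norm \eqref{|||} is verified, the entire remaining collection of terms closes under $C_{1}(1+\epsilon^{2}|\!|\!|(N_{e},\mathbf U)|\!|\!|_{\epsilon}^{6})(1+|\!|\!|(N_{e},\mathbf U)|\!|\!|_{\epsilon}^{2})$, proving \eqref{equ16}.
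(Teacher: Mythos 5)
Your overall strategy is exactly the paper's: the printed proof of this lemma is a one-line deferral to Lemma \ref{P3} (apply $\partial_{x_{1}}^{2}\partial_{x_{2}}^{2}$ to \eqref{rem-2}--\eqref{rem-3}, pair with $\epsilon^{6}\partial_{x_{1}}^{2}\partial_{x_{2}}^{2}U_{1}$ and $\epsilon^{6}\partial_{x_{1}}^{2}\partial_{x_{2}}^{2}U_{2}$, and rerun the three-step argument), and your identification of the operator, the weight $\epsilon^{6}=\epsilon^{\alpha+2\beta}$, the hard term $F_{1}^{\ast}$, and the substitution from $\partial_{x_{1}}\partial_{x_{2}}^{2}$ of \eqref{rem-1} are all correct.

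One step as you describe it would fail, however. You attribute the $\frac{d}{dt}$ integrals carrying the weights $\frac{n_{e}^{2}}{n_{i}}$, $\frac{H^{2}}{4n_{e}n_{i}}$, $\frac{H^{2}}{4n_{i}}$, $\frac{H^{4}}{16n_{e}^{2}n_{i}}$ to the piece of $F_{1}^{\ast}$ containing $\partial_{x_{1}}\partial_{x_{2}}^{2}\bigl((V-u_{i1})\partial_{x_{1}}N_{i}\bigr)$, and only the single term $\frac{d}{dt}\int\frac{n_{e}}{n_{i}}(\partial_{x_{1}}^{2}\partial_{x_{2}}^{2}N_{e})^{2}$ to the piece containing $\epsilon\partial_{t}\partial_{x_{1}}\partial_{x_{2}}^{2}N_{i}$. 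This is backwards: the $(V-u_{i1})$ piece contains no time derivative, so no integration by parts can produce a $\frac{d}{dt}$ from it; in the template of Lemma \ref{P3} it is the term $I_{2}$, which is estimated directly by the triple norm after substituting $\partial_{x_{1}}^{2}\partial_{x_{2}}^{2}N_{i}$ from \eqref{rem-4}. \emph{All} of the $N_{e}$ time-derivative structure on the left of \eqref{equ16} comes from the $\partial_{t}N_{i}$ piece (the analogue of $I_{1}$), after substituting $\partial_{t}\partial_{x_{1}}\partial_{x_{2}}^{2}$ of \eqref{rem-4} and integrating by parts. This is not mere bookkeeping: the resulting products $\int w\,\partial_{x_{1}}^{\alpha}\partial_{x_{2}}^{\beta}N_{e}\,\partial_{t}\partial_{x_{1}}^{\alpha}\partial_{x_{2}}^{\beta}N_{e}$ with $\alpha+\beta$ up to $7$ cannot be absorbed by $|\!|\!|\cdot|\!|\!|_{\epsilon}$ or by Lemmas \ref{L2}--\ref{L3} (which do not control $\partial_{t}$ of the top-order spatial derivatives); they must be recognized as exact derivatives $\frac{1}{2}\frac{d}{dt}\int w(\cdot)^{2}$ modulo lower-order errors. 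With that correction, the rest of your sketch carries through as in Lemma \ref{P3}.
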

\begin{proof}[Proof of Lemma \ref{P5}.] The proof of this lemma is similar to Lemma \ref{P3}. We take $\partial_{x_{1}}^{2}\partial_{x_{2}}^{2}$ of \eqref{rem-2} and \eqref{rem-3} respectively, then take inner product of $\epsilon^{6}\partial_{x_{1}}^{2}\partial_{x_{2}}^{2}U_{1}, \epsilon^{6}\partial_{x_{1}}^{2}\partial_{x_{2}}^{2}U_{2}$ and sum the results.
\end{proof}

\begin{lemma}\label{P6}
 Let $(N_{i},N_{e},\mathbf{U})$ be a solution to \eqref{rem}. Then
\begin{equation}\label{equ17}
\begin{split}
\frac{\epsilon^{7}}{2}\frac{d}{dt}&\big(\|\partial_{x_{1}}\partial_{x_{2}}^{3}U_{1}\|^{2}+\|\partial_{x_{1}}\partial_{x_{2}}^{3}U_{2}\|^{2}\big)\\
&+\frac{\epsilon^{7}}{2}\frac{d}{dt}\int\frac{n_{e}}{n_{i}}(\partial_{x_{1}}\partial_{x_{2}}^{3}N_{e})^{2}
+\frac{\epsilon^{8}}{2}\frac{d}{dt}\int\big(\frac{n_{e}^{2}}{n_{i}}+\frac{^{H^{2}}}{4}\frac{1}{n_{e}n_{i}}\big)
\left((\partial_{x_{1}}^{2}\partial_{x_{2}}^{3}N_{e})^{2}+\epsilon(\partial_{x_{1}}\partial_{x_{2}}^{4}N_{e})^{2}\right)\\
&+\frac{\epsilon^{9}}{2}\frac{H^{2}}{4}\int\frac{1}{n_{i}}\left((2\partial_{x_{1}}^{3}\partial_{x_{2}}^{3}N_{e})^{2}
+3\epsilon(\partial_{x_{1}}^{2}\partial_{x_{2}}^{4}N_{e})^{2}
+2\epsilon^{2}(\partial_{x_{1}}\partial_{x_{2}}^{5}N_{e})^{2}\right)\\
&+\frac{\epsilon^{10}}{2}\frac{H^{4}}{16}\int\frac{1}{n_{e}^{2}n_{i}}\left((\partial_{x_{1}}^{4}\partial_{x_{2}}^{3}N_{e})^{2}
+3\epsilon(\partial_{x_{1}}^{3}\partial_{x_{2}}^{4}N_{e})^{2}
+3\epsilon^{2}(\partial_{x_{1}}^{2}\partial_{x_{2}}^{5}N_{e})^{2}+\epsilon^{3}(\partial_{x_{1}}\partial_{x_{2}}^{6}N_{e})^{2}\right)\\
\leq & C_{1}(1+\epsilon^{2}|\!|\!|(N_{e},\mathbf{U})|\!|\!|_{\epsilon}^{6})(1+|\!|\!|(N_{e},\mathbf{U})|\!|\!|_{\epsilon}^{2}).
\end{split}
\end{equation}
\end{lemma}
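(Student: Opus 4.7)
The strategy is identical to that of Lemma \ref{P3}, with the overall differentiation pattern $\partial_{x_1}^4$ replaced by $\partial_{x_1}\partial_{x_2}^3$ throughout. I would apply $\partial_{x_1}\partial_{x_2}^3$ to the momentum equations \eqref{rem-2} and \eqref{rem-3}, take the $L^2$-inner products with $\epsilon^{7}\partial_{x_1}\partial_{x_2}^3 U_1$ and $\epsilon^{7}\partial_{x_1}\partial_{x_2}^3 U_2$ respectively, and sum. The LHS produces the kinetic time derivative $\frac{\epsilon^7}{2}\frac{d}{dt}(\|\partial_{x_1}\partial_{x_2}^3 U_1\|^2+\|\partial_{x_1}\partial_{x_2}^3 U_2\|^2)$, while the RHS consists of a principal coupling term analogous to $F_1$ in the proof of Lemma \ref{P3}, together with commutators such as $[\partial_{x_1}\partial_{x_2}^3,V-u_{i_1}]$, $[\partial_{x_1}\partial_{x_2}^3,n_e]$, $[\partial_{x_1}\partial_{x_2}^3,1/n_e]$, and inhomogeneous remainders $\mathcal R_2^{(j)}$, $\mathcal R_3^{(j)}$ together with the $A_i, B_i$ quotients from the Appendix.

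After integrations by parts in $x_1$ and $x_2$ that align the $U_2$-factor with the natural divergence combination, the principal coupling takes the form $F_1 = -\int(f_{\mathrm{press}}+f_{\mathrm{Bohm}})(\partial_{x_1}\partial_{x_2}^3 U_1 + \epsilon^{1/2}\partial_{x_2}^4 U_2)$ modulo commutators, with principal parts $f_{\mathrm{press}} = \epsilon^6 n_e\partial_{x_1}^2\partial_{x_2}^3 N_e$ and $f_{\mathrm{Bohm}} = -\frac{\epsilon^7 H^2}{4n_e}(\partial_{x_1}^4\partial_{x_2}^3 N_e + \epsilon\partial_{x_1}^2\partial_{x_2}^5 N_e)$. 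Recognizing $\partial_{x_1}\partial_{x_2}^3 U_1 + \epsilon^{1/2}\partial_{x_2}^4 U_2 = \partial_{x_2}^3(\partial_{x_1}U_1+\epsilon^{1/2}\partial_{x_2}U_2)$, I would apply $\partial_{x_2}^3$ to the continuity equation \eqref{rem-1} to rewrite this divergence as $-\epsilon\partial_t\partial_{x_2}^3 N_i/n_i$ plus lower-order terms, and then apply $\partial_{x_2}^3$ to the Poisson-type equation \eqref{rem-4} to express $\partial_{x_2}^3 N_i$ through $N_e$; the principal Bohm part of the latter is $+\frac{\epsilon^2 H^2}{4n_e}(\partial_{x_1}^4\partial_{x_2}^3 + 2\epsilon\partial_{x_1}^2\partial_{x_2}^5 + \epsilon^2\partial_{x_2}^7)N_e$, of total order seven.

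The cross-products of the pressure and Bohm parts of $f_{\mathrm{press}}+f_{\mathrm{Bohm}}$ with the pressure and Bohm parts of $\partial_t\partial_{x_2}^3 N_i$ produce six principal terms; in each of them, a further sequence of integrations by parts in $x_1$ and $x_2$ redistributes derivatives symmetrically between the two factors, so that each cross-product collapses into $\frac{1}{2}\partial_t$ of a weighted squared $N_e$-norm whose multi-index is the coordinatewise average of the two originals. The pressure$\times$pressure cross yields the $\frac{\epsilon^7}{2}\frac{d}{dt}\int (n_e/n_i)(\partial_{x_1}\partial_{x_2}^3 N_e)^2$ term, the pressure$\times$Bohm and Bohm$\times$pressure crosses contribute the order-five and order-six families with prefactors $n_e^2/n_i+H^2/(4n_en_i)$ and $H^2/(4n_i)$, and the Bohm$\times$Bohm cross reproduces the binomial $(1,3,3,1)$ expansion $(\partial_{x_1}^4\partial_{x_2}^3 N_e)^2 + 3\epsilon(\partial_{x_1}^3\partial_{x_2}^4 N_e)^2 + 3\epsilon^2(\partial_{x_1}^2\partial_{x_2}^5 N_e)^2 + \epsilon^3(\partial_{x_1}\partial_{x_2}^6 N_e)^2$ with the prefactor $\frac{\epsilon^{10}}{2}\frac{H^4}{16 n_e^2 n_i}$, exactly as required in \eqref{equ17}.

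All remaining contributions --- the commutators; $\partial_t$-derivatives of the coefficients $1/n_e$, $1/n_i$, $n_e/n_i$; the pointwise bounds on $\partial_{x_i}(1/n_e)$ and $\partial_{x_ix_j}(1/n_e)$ from \eqref{one order}--\eqref{two orders} together with the $\partial_t$ analogue \eqref{one}; the inhomogeneous remainders $\mathcal R_i^{(j)}$ controlled by Lemma \ref{Lem1}; and the $A_i, B_i, C_j$ quotient terms --- are each bounded by $C_1(1+\epsilon^2|\!|\!|(N_e,\mathbf U)|\!|\!|_{\epsilon}^{6})(1+|\!|\!|(N_e,\mathbf U)|\!|\!|_{\epsilon}^{2})$ by combining Lemma \ref{Lem2}, the Sobolev embeddings $H^2\hookrightarrow L^\infty$, $H^1\hookrightarrow L^6$, $H^1\hookrightarrow L^3$, the a priori assumption \eqref{priori}, and Lemmas \ref{L1}, \ref{L2}, \ref{L3} to trade $N_i$, $\partial_t N_i$ and $\partial_t N_e$ for weighted norms of $N_e$. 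The principal technical obstacle is bookkeeping in the anisotropic scalings: since $\partial_{x_2}$ carries weight $\epsilon^2$ in \eqref{|||} while $\partial_{x_1}$ carries only $\epsilon$, the three extra $\partial_{x_2}$-derivatives compared to Lemma \ref{P3} shift every term by a factor of $\epsilon^3$, and one must check that the highest-derivative Bohm$\times$Bohm piece $(\partial_{x_1}\partial_{x_2}^6 N_e)^2$ sits exactly at the boundary $\alpha+\beta=7$ of the triple norm --- the reason this highest order appears in \eqref{|||} at all.
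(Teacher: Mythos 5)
Your proposal follows exactly the route the paper takes: the paper's own proof of this lemma is a two-line remark that one applies $\partial_{x_{1}}\partial_{x_{2}}^{3}$ to \eqref{rem-2}--\eqref{rem-3}, pairs with $\epsilon^{7}\partial_{x_{1}}\partial_{x_{2}}^{3}U_{1}$ and $\epsilon^{7}\partial_{x_{1}}\partial_{x_{2}}^{3}U_{2}$, and repeats the argument of Lemma \ref{P3}, which is precisely your plan, and your accounting of the principal coupling through the continuity and Poisson equations (including the $(\partial_{x_1}^2+\epsilon\partial_{x_2}^2)^3$ binomial structure of the Bohm--Bohm cross and the $\epsilon^3$ shift from trading three $\partial_{x_1}$'s for $\partial_{x_2}$'s in the anisotropic norm) is consistent with the stated inequality. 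No gap; your write-up is in fact more explicit than the paper's.
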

\begin{proof}[Proof of Lemma \ref{P6}.] The proof of this lemma is similar to Lemma \ref{P3}. We take $\partial_{x_{1}}\partial_{x_{2}}^{3}$ of \eqref{rem-2} and \eqref{rem-3} respectively, then take inner product of $\epsilon^{7}\partial_{x_{1}}\partial_{x_{2}}^{3}U_{1}, \epsilon^{7}\partial_{x_{1}}\partial_{x_{2}}^{3}U_{2}$ and sum the results.
\end{proof}

\begin{lemma}\label{P7}
 Let $(N_{i},N_{e},\mathbf{U})$ be a solution to \eqref{rem}. Then
\begin{equation}\label{equ18}
\begin{split}
\frac{\epsilon^{8}}{2}\frac{d}{dt}&\big(\|\partial_{x_{2}}^{4}U_{1}\|^{2}+\|\partial_{x_{2}}^{4}U_{2}\|^{2}\big)\\
&+\frac{\epsilon^{8}}{2}\frac{d}{dt}\int\frac{n_{e}}{n_{i}}(\partial_{x_{2}}^{4}N_{e})^{2}
+\frac{\epsilon^{9}}{2}\frac{d}{dt}\int\big(\frac{n_{e}^{2}}{n_{i}}+\frac{^{H^{2}}}{4}\frac{1}{n_{e}n_{i}}\big)
\left((\partial_{x_{1}}\partial_{x_{2}}^{4}N_{e})^{2}+\epsilon(\partial_{x_{2}}^{5}N_{e})^{2}\right)\\
&+\frac{\epsilon^{10}}{2}\frac{H^{2}}{4}\int\frac{1}{n_{i}}\left((2\partial_{x_{1}}^{2}\partial_{x_{2}}^{4}N_{e})^{2}
+3\epsilon(\partial_{x_{1}}\partial_{x_{2}}^{5}N_{e})^{2}
+2\epsilon^{2}(\partial_{x_{2}}^{6}N_{e})^{2}\right)\\
&+\frac{\epsilon^{11}}{2}\frac{H^{4}}{16}\int\frac{1}{n_{e}^{2}n_{i}}\left((\partial_{x_{1}}^{3}\partial_{x_{2}}^{4}N_{e})^{2}
+3\epsilon(\partial_{x_{1}}^{2}\partial_{x_{2}}^{5}N_{e})^{2}
+3\epsilon^{2}(\partial_{x_{1}}\partial_{x_{2}}^{6}N_{e})^{2}+\epsilon^{3}(\partial_{x_{2}}^{7}N_{e})^{2}\right)\\
\leq & C_{1}(1+\epsilon^{2}|\!|\!|(N_{e},\mathbf{U})|\!|\!|_{\epsilon}^{6})(1+|\!|\!|(N_{e},\mathbf{U})|\!|\!|_{\epsilon}^{2}).
\end{split}
\end{equation}
\end{lemma}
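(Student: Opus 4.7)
The plan is to mirror the proof of Lemma \ref{P3} exactly, with all derivatives $\partial_{x_1}^4$ replaced by $\partial_{x_2}^4$ and the global weight $\epsilon^4$ replaced by $\epsilon^{2\cdot 4}=\epsilon^8$, consistently with the anisotropic triple norm \eqref{|||}. Concretely, I apply $\partial_{x_2}^4$ to \eqref{rem-2} and \eqref{rem-3}, take the $L^2$ inner product with $\epsilon^8\partial_{x_2}^4 U_1$ and $\epsilon^8\partial_{x_2}^4 U_2$ respectively, and add the two resulting identities. After integration by parts in $x_1$ (for the $\partial_{x_1}N_e$ pressure term in \eqref{rem-2}) and in $x_2$ (for the $\partial_{x_2}N_e$ pressure term in \eqref{rem-3}), the combined singular coupling reorganises into
\[
\epsilon^{7}\!\int n_{e}\,\partial_{x_{2}}^{4}N_{e}\,\bigl(\partial_{x_{1}}\partial_{x_{2}}^{4}U_{1}+\epsilon^{1/2}\partial_{x_{2}}^{5}U_{2}\bigr)
-\frac{\epsilon^{8}H^{2}}{4}\!\int\frac{\partial_{x_{1}}^{3}\partial_{x_{2}}^{4}N_{e}+\epsilon\,\partial_{x_{1}}\partial_{x_{2}}^{6}N_{e}}{n_{e}}\bigl(\partial_{x_{1}}\partial_{x_{2}}^{4}U_{1}+\epsilon^{1/2}\partial_{x_{2}}^{5}U_{2}\bigr),
\]
plus commutator terms and Bohm–potential contributions completely analogous to those in \eqref{equ14}.

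Next, exactly as in \eqref{z1}, I apply $\partial_{x_2}^4$ to \eqref{rem-1} to express $\partial_{x_{1}}\partial_{x_{2}}^{4}U_{1}+\epsilon^{1/2}\partial_{x_{2}}^{5}U_{2}$ as $\frac{1}{n_i}\bigl(-\epsilon\partial_t\partial_{x_{2}}^{4}N_i+(V-u_{i1})\partial_{x_1}\partial_{x_{2}}^{4}N_i-\epsilon^{1/2}u_{i2}\partial_{x_{2}}^{5}N_i+\cdots\bigr)$, and then use $\partial_{x_2}^4$ applied to \eqref{rem-4} to rewrite $\partial_{x_2}^4 N_i$ and $\partial_t\partial_{x_2}^4 N_i$ in terms of $N_e$ and $\partial_t N_e$. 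After this substitution, the $\partial_t$ contributions generate, by integration by parts in $t$, the six time-derivative terms that appear on the LHS of \eqref{equ18} — namely
\[
\tfrac12\tfrac{d}{dt}\!\!\int\!\!\tfrac{n_e}{n_i}\epsilon^8(\partial_{x_2}^4 N_e)^2,\quad
\tfrac12\tfrac{d}{dt}\!\!\int\!\!\bigl(\tfrac{n_e^2}{n_i}+\tfrac{H^2}{4n_en_i}\bigr)\epsilon^{\alpha+2\beta}(\partial_{x_1}^\alpha\partial_{x_2}^\beta N_e)^2,
\]
for $\alpha+\beta=5$, together with the $H^2/4$ and $H^4/16$ contributions for $\alpha+\beta=6,7$ — with exactly the multiplicities $(2,3,2)$ and $(1,3,3,1)$ prescribed by the binomial expansions of $\partial_{x_2}^4(n_e\partial_{x_1}^2 N_e)$, $\partial_{x_2}^4(n_e\partial_{x_2}^2 N_e)$, $\partial_{x_2}^4(\partial_{x_1}^4 N_e/n_e)$, $\partial_{x_2}^4(\partial_{x_1}^2\partial_{x_2}^2 N_e/n_e)$ and $\partial_{x_2}^4(\partial_{x_2}^4 N_e/n_e)$.

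All remaining terms — commutators from $[\partial_{x_2}^4,\cdot]$, derivatives of $1/n_e$ and $1/n_i$ of the form \eqref{one order}–\eqref{two orders}, time-derivatives $\partial_t(n_e/n_i)$ etc., and the contributions from $\mathcal R_1,\mathcal R_2^{(j)},\mathcal R_3^{(j)},\mathcal R_4^{(j)}$, $A_i,B_i,C_j$ — are bounded by the commutator estimate of Lemma \ref{Lem2}, the Sobolev embedding $H^2\hookrightarrow L^\infty$ (together with $H^1\hookrightarrow L^3,L^6$ for the cubic products arising from $\partial_{x_2}^4(1/n_e^k)$), and the controls $\|\epsilon\partial_t N_i\|_{H^2}\lesssim |\!|\!|(N_e,\mathbf U)|\!|\!|_\epsilon+1$ from Lemma \ref{L2} and $\|\epsilon\partial_t N_e\|_{H^2}\lesssim |\!|\!|(N_e,\mathbf U)|\!|\!|_\epsilon+1$ from Lemmas \ref{L1} and \ref{L3}. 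This yields the universal $C_1(1+\epsilon^2|\!|\!|\cdot|\!|\!|_\epsilon^6)(1+|\!|\!|\cdot|\!|\!|_\epsilon^2)$ bound.

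The main obstacle, as in Lemma \ref{P3}, is the tracking of powers of $\epsilon$: because the pressure term in \eqref{rem-3} carries the extra factor $\epsilon^{1/2}$ while in \eqref{rem-2} it does not, the $x_2$-direction is heavier by one power of $\epsilon$ per derivative, which is precisely why the triple norm uses the anisotropic weight $\epsilon^{\alpha+2\beta}$. Here, with $\beta=4$, the global factor $\epsilon^8$ is the highest occurring; one has to verify that the top-order Bohm terms $\epsilon^{11}(\partial_{x_1}^3\partial_{x_2}^4 N_e)^2,\ldots,\epsilon^{14}(\partial_{x_2}^7 N_e)^2$ produced after integration by parts do not exceed the $\alpha+\beta\le 7$ range in \eqref{|||}, and this is indeed the case by construction. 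Once this bookkeeping is done, the remainder estimates close in a line-by-line parallel with the proof of Lemma \ref{P3}.
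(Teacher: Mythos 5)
Your proposal follows exactly the route the paper intends for this lemma (the paper's own proof is a one-line reduction to Lemma \ref{P3}: apply $\partial_{x_2}^4$ to \eqref{rem-2}--\eqref{rem-3}, test with $\epsilon^{8}\partial_{x_2}^{4}U_{1},\epsilon^{8}\partial_{x_2}^{4}U_{2}$, and repeat the argument of Lemma \ref{P3}), and your bookkeeping of the anisotropic weights and of the time-derivative terms on the left of \eqref{equ18} is consistent. One small correction: since $\partial_{x_2}^4$ contains no $x_1$-derivative, the integration by parts in $x_1$ that produces the factor $\partial_{x_1}\partial_{x_2}^4U_1+\epsilon^{1/2}\partial_{x_2}^5U_2$ must also be performed on the Bohm term of \eqref{rem-2}, so the second piece of your singular coupling should carry $\partial_{x_1}^{2}\partial_{x_2}^{4}N_{e}+\epsilon\,\partial_{x_2}^{6}N_{e}$ rather than $\partial_{x_1}^{3}\partial_{x_2}^{4}N_{e}+\epsilon\,\partial_{x_1}\partial_{x_2}^{6}N_{e}$ (as written, the derivative counts do not match and the subsequent pairing with $\partial_t\partial_{x_2}^4$ of \eqref{rem-4} would not close into the stated $\frac{d}{dt}$ terms). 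With that one-derivative fix the argument goes through line by line as you describe.
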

\begin{proof}[Proof of Lemma \ref{P7}.] The proof of this lemma is similar to Lemma \ref{P3}. We take $\partial_{x_{2}}^{4}$ of \eqref{rem-2} and \eqref{rem-3} respectively, then take inner product of $\epsilon^{8}\partial_{x_{2}}^{4}U_{1}, \epsilon^{8}\partial_{x_{2}}^{4}U_{2}$ and sum the results.
\end{proof}
Summing the results of Lemma $\ref{P3} \sim \ref{P7}$, we complete the proof of the Proposition \ref{P2}.

\section{Proof of Theorem \ref{thm1}}
\begin{proof}[\textbf{Proof of Theorem \ref{thm1} }]
Adding Propositions \ref{P1} with $k=0,1,2,3$  and Proposition \ref{P2} together, we obtain
\begin{equation}\label{Gron}
\begin{split}
\frac{1}2&{\frac{d}{dt}}\sum_{0\leq\alpha+\beta\leq4} \epsilon^{\alpha+2\beta}\left(\|\partial_{x_{1}}^{\alpha} \partial_{x_{2}}^{\beta}U_{1}\|^{2} +\|\partial_{x_{1}}^{\alpha}\partial_{x_{2}}^{\beta}U_{2}\|^{2}\right)\\
&+\frac{1}2{\frac{d}{dt}}\int\frac{n_{e}}{n_{i}}{N_{e}}^{2} +\frac{1}2{\frac{d}{dt}}\int\left(\frac{n_{e}}{n_{i}} +\frac{n_{e}^{2}}{n_{i}}+\frac{H^{2}}{4}\frac{1}{n_{e}n_{i}}\right) \sum_{\alpha+\beta=1}\epsilon^{\alpha+2\beta}(\partial_{x_{1}}^{\alpha} \partial_{x_{2}}^{\beta}N_{e})^{2}\\
&+\frac{1}2{\frac{d}{dt}}\int\left(\frac{n_{e}}{n_{i}} +\frac{n_{e}^{2}}{n_{i}}+\frac{H^{2}}{4}\frac{1}{n_{e}n_{i}} +\frac{H^{2}}{4}\frac{1}{{n_{i}}}\right) \sum_{\alpha+\beta=2}\epsilon^{\alpha+2\beta} \left(\partial_{x_{1}}^{\alpha}\partial_{x_{2}}^{\beta}N_{e}\right)^{2}\\
&+\frac{1}2{\frac{d}{dt}}\int\left(\frac{n_{e}}{n_{i}}+\frac{n_{e}^{2}}{n_{i}} +\frac{H^{2}}{4}\frac{1}{n_{e}n_{i}}+\frac{H^{2}}{4}\frac{1}{{n_{i}}} +\frac{H^{4}}{16}\frac{1}{{n_{e}^{2}n_{i}}}\right) \sum_{3\leq\alpha+\beta\leq4}\epsilon^{\alpha+2\beta} \left(\partial_{x_{1}}^{\alpha}\partial_{x_{2}}^{\beta}N_{e}\right)^{2}\\
&+\frac{1}2{\frac{d}{dt}}\int\left(\frac{n_{e}^{2}}{n_{i}} +\frac{H^{2}}{4}\frac{1}{n_{e}n_{i}} +\frac{H^{2}}{4}\frac{1}{{n_{i}}} +\frac{H^{4}}{16}\frac{1}{{n_{e}^{2}n_{i}}}\right) \sum_{\alpha+\beta=5}\epsilon^{\alpha+2\beta} \left(\partial_{x_{1}}^{\alpha}\partial_{x_{2}}^{\beta}N_{e}\right)^{2}\\
&+\frac{1}2\frac{H^{2}}{4}{\frac{d}{dt}}\int\left(\frac{H^{2}}{4} \frac{1}{{n_{i}}}+\frac{H^{4}}{16}\frac{1}{{n_{e}^{2}n_{i}}}\right) \sum_{\alpha+\beta=6}\epsilon^{\alpha+2\beta}\left(\partial_{x_{1}}^{\alpha} \partial_{x_{2}}^{\beta}N_{e}\right)^{2}\\
&+\frac{1}2\frac{H^{4}}{16}{\frac{d}{dt}}\int\frac{1}{n_{e}^{2}n_{i}}
\sum_{\alpha+\beta=7}\epsilon^{\alpha+2\beta} \left(\partial_{x_{1}}^{\alpha}\partial_{x_{2}}^{\beta}N_{e}\right)^{2}\\
\leq &C\left(1+\epsilon^{2}|\!|\!|(N_{e},\mathbf{U})|\!|\!|_{\epsilon}^{6}\right) \left(1+|\!|\!|(N_{e},\mathbf{U})|\!|\!|_{\epsilon}^{2}\right).
\end{split}
\end{equation}
Integrating the inequality \eqref{Gron} over $(0,t)$ yields
\begin{equation*}
\begin{split}
|\!|\!|(N_{e},\mathbf{U})(t)|\!|\!|_{\epsilon}^{2}
&\leq C|\!|\!|(N_{e},\mathbf{U})(0)|\!|\!|_{\epsilon}^{2}
+\int_{0}^{t}C_{1}(1+\epsilon^{2}|\!|\!|(N_{e},\mathbf{U})|\!|\!|_{\epsilon}^{6})
(1+|\!|\!|(N_{e},\mathbf{U})|\!|\!|_{\epsilon}^{2})ds\\
&\leq C|\!|\!|(N_{e},\mathbf{U})(0)|\!|\!|_{\epsilon}^{2}+\int_{0}^{t}C_{1}
(1+\epsilon\tilde{C})
(1+|\!|\!|(N_{e},\mathbf{U})|\!|\!|_{\epsilon}^{2})ds,
\end{split}
\end{equation*}
where $C$ is an absolute constant.

Recall that $C_1$ depends on $|\!|\!|(N_{e},\mathbf{U})|\!|\!|^2_{\epsilon}$ through $\epsilon|\!|\!|(N_{e},\mathbf{U})|\!|\!|^2_{\epsilon}$ and is nondecreasing. Let $C_1'=C_1(1)$ and $C_2>C\sup_{\epsilon<1}|\!|\!|(u^{\epsilon}_R,\phi^{\epsilon}_R) (0)|\!|\!|^2_{\epsilon}$. For any arbitrarily given $\tau>0$, we choose $\tilde C$ sufficiently large such that $\tilde C>e^{4C_1'\tau}(1+C_2)(1+C_1')$. Then there exists $\epsilon_0>0$ such that ${\epsilon}\tilde C\leq 1$ for all $\epsilon<\epsilon_0$, using Gronwall inequality, we have
\begin{equation}\label{e999}
\begin{split}
\sup_{0\leq t\leq\tau}|\!|\!|(N_{e},\mathbf{U})(t)|\!|\!|^2_{\epsilon}\leq e^{4C_1'\tau}(C_2+1)<\tilde C.
\end{split}
\end{equation}
In particular, we have  the uniform bound for $(N_{e},\mathbf{U})$,
\begin{equation}
\begin{split}\label{final}
\sup_{0\leq t\leq\tau}\left(\sum_{0\leq\alpha+\beta\leq4}\epsilon^{\alpha+2\beta} \|\partial_{x_{1}}^{\alpha}\partial_{x_{2}}^{\beta}(U_{1}, U_{2})\|_{L^{2}}^{2} +\sum_{0\leq\alpha+\beta\leq7}\epsilon^{\alpha+2\beta} \|\partial_{x_{1}}^{\alpha}\partial_{x_{2}}^{\beta}N_{e}\|_{L^{2}}^{2}\right) \leq \tilde C.
\end{split}
\end{equation}
On the other hand, by Lemma \ref{L1} and \eqref{final}, we have
\begin{equation*}
\begin{split}
\sup_{0\leq t\leq\tau}\sum_{0\leq\alpha+\beta\leq3}\epsilon^{\alpha+2\beta}\|\partial_{x_{1}}^{\alpha}\partial_{x_{2}}^{\beta}N_{i}\|_{L^{2}}^{2}\leq \tilde C.
\end{split}
\end{equation*}
It is now standard to obtain uniform estimates independent of $\epsilon$ by the continuity method.

\end{proof}

\appendix
\section{}
The concrete expression of $(A_{i}, B_{i})(1\leq i\leq2)$ and $C_{j}(1\leq j\leq3)$ are given by
\begin{align*}
A_{1}=&\big(2\epsilon\partial_{x_{1}}\tilde{n_{e}} +2\epsilon^{5}\partial_{x_{1}}N_{e}\big)\partial_{x_{1}}^{2}N_{e}
+\big(2\epsilon\partial_{x_{1}}^{2}\tilde{n_{e}} +\epsilon^{2}\partial_{x_{2}}^{2}\tilde{n_{e}}\big)\partial_{x_{1}}N_{e}\\
&+\big(\epsilon^{2}\partial_{x_{2}}\tilde{n_{e}} +\epsilon^{5}\partial_{x_{2}}N_{e}\big)\partial_{x_{1}x_{2}}N_{e}
+\big(\epsilon^{2}\partial_{x_{1}}\tilde{n_{e}} +\epsilon^{6}\partial_{x_{1}}N_{e}\big)\partial_{x_{2}}^{2}N_{e}
+\epsilon^{2}\partial_{x_{1}x_{2}}\tilde{n_{e}}\partial_{x_{2}}N_{e},
\end{align*}
\begin{align*}
A_{2}=&\epsilon^{9}(\partial_{x_{1}}N_{e})^{3} +\big(\epsilon^{7}\partial_{x_{1}}\tilde{n_{e}}
+\epsilon^{10}\partial_{x_{1}}N_{e}\big)(\partial_{x_{2}}N_{e})^{2} +3\epsilon^{6}\partial_{x_{1}}\tilde{n_{e}}(\partial_{x_{1}}N_{e})^{2}\\
&+2\epsilon^{7}\partial_{x_{2}}\tilde{n_{e}} \partial_{x_{1}}N_{e}\partial_{x_{2}}N_{e}
+(3\epsilon^{4}(\partial_{x_{1}}\tilde{n_{e}})^{2} +\epsilon^{3}(\partial_{x_{2}}\tilde{n_{e}})^{2})\partial_{x_{1}}N_{e}
+2\epsilon^{3}\partial_{x_{1}}\tilde{n_{e}} \partial_{x_{2}}\tilde{n_{e}}\partial_{x_{2}}N_{e},
\end{align*}
\begin{align*}
B_{1}=&\big(\epsilon\partial_{x_{2}}\tilde{n_{e}} +\epsilon^{5}\partial_{x_{2}}N_{e}\big)\partial_{x_{1}}^{2}N_{e}
+\big(\epsilon\partial_{x_{1}}\tilde{n_{e}} +\epsilon^{5}\partial_{x_{1}}N_{e}\big)\partial_{x_{1}x_{2}}N_{e}\\
&+\big(2\epsilon^{3}\partial_{x_{2}}\tilde{n_{e}} +\epsilon^{6}\partial_{x_{2}}N_{e}\big)\partial_{x_{2}}^{2}N_{e}
+\big(\epsilon\partial_{x_{1}}^{2}\tilde{n_{e}} +2\epsilon^{2}\partial_{x_{2}}^{2}\tilde{n_{e}}\big)\partial_{x_{2}}N_{e}
+\epsilon\partial_{x_{1}x_{2}}\tilde{n_{e}}\partial_{x_{1}}N_{e},
\end{align*}
\begin{align*}
B_{2}=&\epsilon^{10}(\partial_{x_{2}}N_{e})^{3} +\big(\epsilon^{6}\partial_{x_{2}}\tilde{n_{e}}
+\epsilon^{10}\partial_{x_{2}}N_{e}\big)(\partial_{x_{1}}N_{e})^{2} +3\epsilon^{7}\partial_{x_{2}}\tilde{n_{e}}(\partial_{x_{2}}N_{e})^{2}\\
&+2\epsilon^{6}\partial_{x_{1}}\tilde{n_{e}} \partial_{x_{1}}N_{e}\partial_{x_{2}}N_{e}
+(\epsilon^{2}(\partial_{x_{1}}\tilde{n_{e}})^{2} +3\epsilon^{3}(\partial_{x_{2}}\tilde{n_{e}})^{2})\partial_{x_{2}}N_{e}
+2\epsilon^{2}\partial_{x_{1}}\tilde{n_{e}} \partial_{x_{2}}\tilde{n_{e}}\partial_{x_{1}}N_{e},
\end{align*}
\begin{align*}
C_{1}=&\big(3\epsilon\partial_{x_{1}}\tilde{n_{e}} +3\epsilon^{5}\partial_{x_{1}}N_{e}\big)
\big(\partial_{x_{1}}^{3}N_{e} +\epsilon\partial_{x_{1}}\partial_{x_{2}}^{2}N_{e}\big) \\
&+\big(3\epsilon^{2}\partial_{x_{2}}\tilde{n_{e}} +3\epsilon^{7}\partial_{x_{2}}N_{e}\big)
\big(\partial_{x_{1}}^{2}\partial_{x_{2}}N_{e} +\epsilon\partial_{x_{2}}^{3}N_{e}\big) \\
&+\big(4\epsilon\partial_{x_{1}}^{2}\tilde{n_{e}} +2\epsilon^{2}\partial_{x_{2}}^{2}\tilde{n_{e}}
+2\epsilon^{5}\partial_{x_{1}}^{2}N_{e}\big)\partial_{x_{1}}^{2}N_{e}+\big(4\epsilon^{2}\partial_{x_{1}x_{2}}\tilde{n_{e}} +2\epsilon^{6}\partial_{x_{1}x_{2}}N_{e}\big)\partial_{x_{1}x_{2}}N_{e}\\
&+\big(2\epsilon^{2}\partial_{x_{1}}^{2}\tilde{n_{e}} +4\epsilon^{3}\partial_{x_{2}}^{2}\tilde{n_{e}} +2\epsilon^{6}\partial_{x_{1}}^{2}N_{e} +2\epsilon^{7}\partial_{x_{2}}^{2}N_{e}\big)\partial_{x_{2}}^{2}N_{e}\\
&+\big(3\epsilon\partial_{x_{1}}^{3}\tilde{n_{e}} +3\epsilon^{2}\partial_{x_{1}}\partial_{x_{2}}^{2} \tilde{n_{e}}\big)\partial_{x_{1}}N_{e} +\big(3\epsilon^{2}\partial_{x_{1}}^{2}\partial_{x_{2}}\tilde{n_{e}} +3\epsilon^{3}\partial_{x_{2}}^{3}\tilde{n_{e}}\big)\partial_{x_{2}}N_{e},
\end{align*}
\begin{align*}
C_{2}=&\big(7\epsilon^{10}(\partial_{x_{1}}N_{e})^{2}
+3\epsilon^{11}(\partial_{x_{2}}N_{e})^{2}
+14\epsilon^{6}\partial_{x_{1}}\tilde{n_{e}}\partial_{x_{1}}N_{e}
+6\epsilon^{7}\partial_{x_{2}}\tilde{n_{e}} \partial_{x_{2}}N_{e}\big)\partial_{x_{1}}^{2}N_{e}\\
&+\big(3\epsilon^{3}(\partial_{x_{2}}\tilde{n_{e}})^{2}
+7\epsilon^{2}(\partial_{x_{1}}\tilde{n_{e}})^{2}\big) \partial_{x_{1}}^{2}N_{e}
+\big(3\epsilon^{3}(\partial_{x_{1}}\tilde{n_{e}})^{2}
+7\epsilon^{4}(\partial_{x_{2}}\tilde{n_{e}})^{2}\big) \partial_{x_{2}}^{2}N_{e}\\
&+\big(3\epsilon^{11}(\partial_{x_{1}}N_{e})^{2}
+7\epsilon^{12}(\partial_{x_{2}}N_{e})^{2}
+6\epsilon^{7}\partial_{x_{1}}\tilde{n_{e}}\partial_{x_{1}}N_{e}
+14\epsilon^{8}\partial_{x_{2}}\tilde{n_{e}} \partial_{x_{2}}N_{e}\big)\partial_{x_{2}}^{2}N_{e}\\
&+\big(8\epsilon^{11}\partial_{x_{1}}N_{e}\partial_{x_{2}}N_{e} +8\epsilon^{7}\partial_{x_{1}}\tilde{n_{e}}\partial_{x_{2}}N_{e}
+8\epsilon^{7}\partial_{x_{2}}\tilde{n_{e}}\partial_{x_{1}}N_{e}
+8\epsilon^{3}\partial_{x_{1}}\tilde{n_{e}}\partial_{x_{2}} \tilde{n_{e}}\big)\partial_{x_{1}x_{2}}N_{e}\\
&+\big(14\epsilon^{2}\partial_{x_{1}}^{2}\tilde{n_{e}} \partial_{x_{1}}\tilde{n_{e}}
+8\epsilon^{3}\partial_{x_{2}}\tilde{n_{e}} \partial_{x_{1}x_{2}}\tilde{n_{e}}
+6\epsilon^{3}\partial_{x_{2}}^{2}\tilde{n_{e}} \partial_{x_{1}}\tilde{n_{e}}\big)\partial_{x_{1}}N_{e}\\
&+\big(3\epsilon^{7}\partial_{x_{2}}^{2}\tilde{n_{e}}\partial_{x_{1}}N_{e}
+8\epsilon^{7}\partial_{x_{1}x_{2}}\tilde{n_{e}}\partial_{x_{2}}N_{e}
+7\epsilon^{6}\partial_{x_{1}}^{2}\tilde{n_{e}} \partial_{x_{1}}N_{e}\big)\partial_{x_{1}}N_{e}\\
&+\big(6\epsilon^{3}\partial_{x_{1}}^{2}\tilde{n_{e}} \partial_{x_{2}}\tilde{n_{e}}
+8\epsilon^{3}\partial_{x_{1}}\tilde{n_{e}} \partial_{x_{1}x_{2}}\tilde{n_{e}}
+14\epsilon^{4}\partial_{x_{2}}^{2}\tilde{n_{e}} \partial_{x_{2}}\tilde{n_{e}}\big)\partial_{x_{2}}N_{e}\\
&+\big(3\epsilon^{7}\partial_{x_{1}}^{2}\tilde{n_{e}}\partial_{x_{2}}N_{e}
+7\epsilon^{8}\partial_{x_{2}}^{2}\tilde{n_{e}} \partial_{x_{2}}N_{e}\big)\partial_{x_{2}}N_{e},
\end{align*}
\begin{align*}
C_{3}=&3\epsilon^{16}(\partial_{x_{1}}N_{e})^{4}
+6\epsilon^{17}(\partial_{x_{1}}N_{e})^{2}(\partial_{x_{2}}N_{e})^{2}
+3\epsilon^{18}(\partial_{x_{2}}N_{e})^{4}
+12\epsilon^{11}\partial_{x_{1}}\tilde{n_{e}}(\partial_{x_{1}}N_{e})^{3}\\ &+12\epsilon^{12}\partial_{x_{1}}\tilde{n_{e}} \partial_{x_{1}}N_{e}(\partial_{x_{2}}N_{e})^{2}
+12\epsilon^{12}\partial_{x_{2}}\tilde{n_{e}} (\partial_{x_{1}}N_{e})^{2}\partial_{x_{2}}N_{e}
+12\epsilon^{13}\partial_{x_{2}}\tilde{n_{e}} (\partial_{x_{2}}N_{e})^{3}\\
&+18\epsilon^{7}(\partial_{x_{1}}\tilde{n_{e}})^{2} (\partial_{x_{1}}N_{e})^{2}
+6\epsilon^{8}(\partial_{x_{1}}\tilde{n_{e}})^{2} (\partial_{x_{2}}N_{e})^{2}
+24\epsilon^{8}\partial_{x_{1}}\tilde{n_{e}} \partial_{x_{2}}\tilde{n_{e}}\partial_{x_{1}}N_{e}\partial_{x_{2}}N_{e}\\ &+6\epsilon^{8}(\partial_{x_{2}}\tilde{n_{e}})^{2}(\partial_{x_{1}}N_{e})^{2}
+18\epsilon^{9}(\partial_{x_{2}}\tilde{n_{e}})^{4}(\partial_{x_{2}}N_{e})^{2}
+12\epsilon^{3}(\partial_{x_{2}}\tilde{n_{e}})^{3}\partial_{x_{1}}N_{e}\\ &+12\epsilon^{4}(\partial_{x_{1}}\tilde{n_{e}})^{2}\partial_{x_{2}}\tilde{n_{e}}\partial_{x_{2}}N_{e}
+12\epsilon^{4}\partial_{x_{1}}\tilde{n_{e}}(\partial_{x_{2}}\tilde{n_{e}})^{2}\partial_{x_{1}}N_{e}
+12\epsilon^{5}(\partial_{x_{2}}\tilde{n_{e}})^{3}\partial_{x_{2}}N_{e}.
\end{align*}

For reader's convenience, we give the following
\begin{lemma}\label{Lem1}
For $\alpha=0,1,\cdots$ integers and $\gamma=max\{2,\alpha-1\}$, there exist constants $C_{1}=C_{1}(\|n_{e}^{(i)}\|_{H^{\alpha}})$ and $C_{2}=C_{2}(\epsilon\|N_{e}\|_{H^{\gamma}})$ such that
\begin{equation}\label{equ3}
\begin{split}
\|\mathcal R_1,\mathcal R_2^{1,2},\mathcal R_3^{1,2},\mathcal R_4^{1,2}\|_{H^{\alpha}}\leq C_{1}(\|n_{e}^{(i)}\|_{H^{\alpha}}) ,\ \ \ \alpha=0,1,\cdots,
\end{split}
\end{equation}
\begin{equation}\label{equ4}
\begin{split}
\|\mathcal R_2^{3},\mathcal R_3^{3},\mathcal R_4^{3}\|_{H^{\alpha}}\leq C_{2}(\epsilon\|N_{e}\|_{H^{\gamma}})(1+\|N_{e}\|_{H^{\alpha}}), \ \ \ \alpha=0,1,\cdots,
\end{split}
\end{equation}
and
\begin{equation}\label{equ5}
\begin{split}
\|\partial_t\mathcal{R}_2^{3},\partial_t \mathcal{R}_3^{3},\partial_t\mathcal R_4^{3}\|_{H^{\alpha}}\leq C_{2}(\epsilon\|N_{e}\|_{H^{\gamma}})(1+\|\partial_tN_{e}\|_{H^{\alpha}}), \ \ \ \alpha=0,1,\cdots,
\end{split}
\end{equation}
\end{lemma}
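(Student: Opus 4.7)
The proof naturally splits into three pieces corresponding to the three inequalities \eqref{equ3}--\eqref{equ5}. Before starting, I would record the structural facts already indicated in the text: the remainders $\mathcal R_1,\mathcal R_2^{(1,2)},\mathcal R_3^{(1,2)},\mathcal R_4^{(1,2)}$ are polynomial expressions in the known profiles $(n_i^{(k)},n_e^{(k)},u_{i_1}^{(k)},u_{i_2}^{(k)})$ for $1\le k\le 6$ and in their spatial derivatives, and the bound \eqref{equ3'} makes them uniformly $H^s$-smooth; whereas $\mathcal R_2^{(3)},\mathcal R_3^{(3)},\mathcal R_4^{(3)}$ are smooth nonlinear functions of $N_e$ alone (no derivatives of $N_e$), arising from Taylor remainders of expressions such as $1/n_e^m$ or $1/\sqrt{n_e}$ about the base state $1+\epsilon\tilde{n}_e$. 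The main tools I would invoke are the Moser product inequality
\[
\|fg\|_{H^\alpha}\le C\bigl(\|f\|_{L^\infty}\|g\|_{H^\alpha}+\|g\|_{L^\infty}\|f\|_{H^\alpha}\bigr),
\]
the Moser composition inequality $\|F(u)\|_{H^\alpha}\le C(\|u\|_{L^\infty})(1+\|u\|_{H^\alpha})$ for smooth $F$, and the 2D Sobolev embedding $H^\gamma\hookrightarrow L^\infty$ for $\gamma\ge 2$.

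For \eqref{equ3}, I would iterate the product inequality monomial-by-monomial on the polynomial expressions defining $\mathcal R_1,\mathcal R_2^{(1,2)},\mathcal R_3^{(1,2)},\mathcal R_4^{(1,2)}$. Because each factor is one of the profiles or one of their derivatives (each bounded in every Sobolev norm by a constant depending on $\|(n_e^{(k)},\dots)\|_{H^\alpha}$ via \eqref{equ3'}), the result is a polynomial in $\|(n_i^{(k)},n_e^{(k)},u_{i_1}^{(k)},u_{i_2}^{(k)})\|_{H^\alpha}$, which is absorbed into $C_1$.

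For \eqref{equ4}, I would write $\mathcal R_i^{(3)}=G_i(\epsilon^5 N_e,\epsilon\tilde{n}_e)$ where $G_i$ is smooth in both arguments, $G_i(0,\epsilon\tilde{n}_e)=0$, and then apply the composition estimate with $u=\epsilon^5 N_e$:
\[
\|\mathcal R_i^{(3)}\|_{H^\alpha}\le C\bigl(\epsilon^5\|N_e\|_{L^\infty},\epsilon\|\tilde{n}_e\|_{L^\infty}\bigr)\bigl(1+\epsilon^5\|N_e\|_{H^\alpha}\bigr).
\]
Using $\|N_e\|_{L^\infty}\le C\|N_e\|_{H^\gamma}$ for $\gamma\ge 2$ and folding the constants from \eqref{equ3'} together with the extra $\epsilon^5$ into a single nondecreasing function of $\epsilon\|N_e\|_{H^\gamma}$ gives \eqref{equ4}.

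For \eqref{equ5}, I apply the chain rule, writing $\partial_t\mathcal R_i^{(3)}=\partial_u G_i(\epsilon^5 N_e,\epsilon\tilde{n}_e)\,\epsilon^5\partial_t N_e+\partial_v G_i(\epsilon^5 N_e,\epsilon\tilde{n}_e)\,\epsilon\partial_t\tilde{n}_e$, apply the composition estimate to $\partial_u G_i$ and $\partial_v G_i$ exactly as in the previous step, and then one more Moser product estimate to multiply by $\partial_t N_e$ or $\partial_t\tilde{n}_e$; the latter is controlled uniformly by the QKP equation, and terms like $\|N_e\|_{H^\alpha}$ appearing after the product rule are $\le(1+\|\partial_t N_e\|_{H^\alpha})$ times an $\epsilon\|N_e\|_{H^\gamma}$-dependent constant (again using that $\epsilon^5\le\epsilon$ makes the pre-factors harmless).

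The only real difficulty is bookkeeping: one must verify that every occurrence of $N_e$ inside the smooth functions $G_i$ actually comes coupled with at least one factor of $\epsilon$ (in fact $\epsilon^5$ through the ansatz \eqref{cut}), so that the resulting composition constant depends on the small quantity $\epsilon\|N_e\|_{H^\gamma}$ rather than $\|N_e\|_{H^\gamma}$; this is exactly what lets \eqref{equ4}--\eqref{equ5} close the a priori bootstrap in the main argument. No genuinely new analytic input is required beyond the Moser inequalities and the Sobolev embedding.
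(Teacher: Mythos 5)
Your proof is correct and takes essentially the same route the paper intends: the paper's own ``proof'' of this lemma is the single sentence that the estimates follow from the H\"older inequality and Sobolev embedding with all details omitted, and your Moser product/composition argument together with $H^{2}\hookrightarrow L^{\infty}$ and the observation that $N_{e}$ only enters through $\epsilon^{5}N_{e}$ in the ansatz \eqref{cut} is exactly the standard way to supply those details. You actually give more than the paper does, including the key bookkeeping point that the composition constants depend on the small quantity $\epsilon\|N_{e}\|_{H^{\gamma}}$, which is what allows \eqref{equ4}--\eqref{equ5} to feed into the a priori argument.
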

\begin{proof}
By H\"older inequality and Sobolev embedding, the estimates for Lemma \ref{Lem1} are straightforward. The details 
are hence omitted.
\end{proof}

\end{document}